
\documentclass[11pt]{amsart}
\usepackage{amsfonts,latexsym,amsthm,amssymb,graphicx}
\usepackage[all]{xy}
\usepackage{hyperref}
\usepackage[usenames]{color} 

\DeclareFontFamily{OT1}{rsfs}{}
\DeclareFontShape{OT1}{rsfs}{n}{it}{<-> rsfs10}{}
\DeclareMathAlphabet{\mathscr}{OT1}{rsfs}{n}{it}

\setlength{\textwidth}{6 in}
\setlength{\textheight}{8.75 in}
\setlength{\topmargin}{-0.25in}
\setlength{\oddsidemargin}{0.25in}
\setlength{\evensidemargin}{0.25in}

\CompileMatrices

\newtheorem{theorem}{Theorem}[section]
\newtheorem{lemma}[theorem]{Lemma}
\newtheorem{corol}[theorem]{Corollary}
\newtheorem{prop}[theorem]{Proposition}
\newtheorem{claim}[theorem]{Claim}
\newtheorem{quest}{Question}

{\theoremstyle{definition} \newtheorem{defin}[theorem]{Definition}}
{\theoremstyle{definition} \newtheorem{definthm}[theorem]{Definition/Theorem}}
{\theoremstyle{remark} \newtheorem{remark}[theorem]{Remark}
\newtheorem{example}[theorem]{Example}}

\newtheorem*{propnono}{Proposition}
\newtheorem*{conjnono}{Conjecture}
\newtheorem{thmrm}{Theorem}

\numberwithin{equation}{section}

\newcommand{\Abb}{{\mathbb{A}}}
\newcommand{\Nbb}{{\mathbb{N}}}
\newcommand{\Pbb}{{\mathbb{P}}}
\newcommand{\Qbb}{{\mathbb{Q}}}
\newcommand{\Rbb}{{\mathbb{R}}}
\newcommand{\Zbb}{{\mathbb{Z}}}

\newcommand{\cN}{{\mathscr N}}
\newcommand{\cO}{{\mathscr O}}
\newcommand{\cP}{{\mathscr P}}
\newcommand{\cK}{{\mathscr K}}
\newcommand{\cR}{{\mathscr R}}
\newcommand{\cV}{{\mathscr V}}
\newcommand{\rkR}{r}
\newcommand{\id}{\mathrm{id}}
\newcommand{\ue}{\underline e}
\newcommand{\ut}{\underline t}
\newcommand{\uu}{\underline u}
\newcommand{\uv}{\underline v}
\newcommand{\uw}{\underline w}
\newcommand{\uN}{\underline N}
\newcommand{\Til}[1]{{\widetilde{#1}}}
\newcommand{\saf}{\,}
\newcommand{\qede}{\hfill$\lrcorner$}

\DeclareMathOperator{\PGL}{PGL}
\DeclareMathOperator{\rk}{rk}
\DeclareMathOperator{\codim}{codim}
\DeclareMathOperator{\Vol}{Vol}


\title[Lorentzian polynomials, Segre classes, and adjoint polynomials]{Lorentzian polynomials, 
Segre classes, and adjoint polynomials of convex polyhedral cones\footnote{2023/12/01}
}
\author{Paolo Aluffi}
\address{
Mathematics Department, 
Florida State University,
Tallahassee FL 32306, U.S.A.
}
\email{aluffi@math.fsu.edu}

\begin{document}

\begin{abstract}
We consider polynomials expressing the cohomology classes of subvarieties of products of 
projective spaces, and limits of positive real multiples of such polynomials. We study the
relation between these {\em covolume polynomials\/} and Lorentzian polynomials.
While these are distinct notions, we prove that, like Lorentzian polynomials, covolume
polynomials have M-convex support and generalize the notion of log-concave sequences.
In fact, we prove that covolume polynomials are `sectional log-concave', that is, 
the coefficients of suitable restrictions of these polynomials form log-concave sequences.

We observe that Chern classes of globally generated bundles give rise to covolume polynomials,
and use this fact to prove that certain polynomials associated with {\em Segre classes\/} of subschemes
of products of projective spaces are covolume polynomials. We conjecture that the same polynomials
may be Lorentzian after a standard normalization operation.

Finally, we obtain a combinatorial application of a particular case of our Segre class result. 
We prove that the {\em adjoint polynomial\/} of a convex polyhedral cone contained in the 
nonnegative orthant, and sharing a face with it, is a covolume polynomial. This implies that
these adjoint polynomials are M-convex and sectional log-concave, and in fact {\em dually
Lorentzian,\/} that is, Lorentzian after a certain change of variables.
\end{abstract}

\maketitle


\section{Introduction}\label{S:intro}
This paper consists of three parts. In the first part we consider `covolume
polynomials', that is, limits of positive multiples of polynomials whose coefficients 
are multidegrees of
(irreducible) subvarieties of products of projective spaces. We are interested in 
studying covolume polynomials from the point of view of {\em Lorentzian\/} 
polynomials. Petter Br\"{a}nd\'{e}n and June Huh defined and extensively 
studied Lorentzian polynomials in~\cite{MR4172622}. An equivalent notion,
{\em completely log-concave\/} polynomials, was introduced at the same time
by Nima Anari, Kuikui Liu, Shayan Oveis Gharan and Cynthia Vinzant 
(\cite{Vinzant,MR4332671}).
Lorentzian polynomials 
generalize the notion of (ultra) log-concavity, in the sense that a homogeneous 
polynomial in two variables is Lorentzian if and only if its coefficients 
are nonnegative and form an ultra log-concave sequence with no internal zeros.
In fact, the restriction of every Lorentzian polynomial to a plane spanned by two
nonnegative vectors satisfies this ultra-log-concave property.
Br\"{a}nd\'{e}n and Huh prove that `volume polynomials' of projective varieties
are necessarily Lorentzian (\cite[Theorem~4.6]{MR4172622}). The covolume
polynomials studied in this note (or, rather, their normalizations in the sense of
Br\"{a}nd\'{e}n and Huh, see~\cite[Corollary~3.7]{MR4172622}) are a dual notion 
to volume polynomials, in the following sense: the volume polynomial of a subvariety $W$
of a product of projective spaces, with respect to the divisors defined by the hyperplane
classes of the factors, records the homology class of $W$; the corresponding covolume
polynomial similarly records the cohomology class of $W$.
While they are not necessarily Lorentzian, we prove that covolume polynomials share 
several properties of note with Lorentzian polynomials.
Some of these properties are due to their connection with volume polynomials;
others are a consequence of Olivier Debarre's beautiful extension of the
Fulton-Hansen connectedness theorem (\cite{MR1420927}).

Covolume polynomials provide an alternative generalization of log-concavity, 
which we call `sectional log-concavity' (Definition~\ref{def:slc}). Briefly, a homogeneous 
polynomial is sectional log-concave if and only if the coefficients of its restrictions to planes 
spanned by vectors with nonnegative components form a log-concave sequence with no 
internal zeros. (In the same sense, Lorentzian polynomials are `sectional
ultra-log-concave'.) Among other properties, we prove (Corollary~\ref{cor:slc}):

\begin{thmrm}\label{thm:covsec}
Covolume polynomials are sectional log-concave.
\end{thmrm}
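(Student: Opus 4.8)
The plan is to reduce to the case of the multidegree polynomial $M_X$ of an irreducible subvariety $X\subseteq\Pbb^{n_1}\times\cdots\times\Pbb^{n_k}$ and there to combine the Hodge index theorem---in the guise of Br\"{a}nd\'{e}n--Huh's result that volume polynomials are Lorentzian \cite[Theorem~4.6]{MR4172622}---with Debarre's extension of the Fulton--Hansen connectedness theorem \cite{MR1420927}. First I would note that sectional log-concavity (Definition~\ref{def:slc}) is preserved by multiplication by a positive scalar, and also under limits: for a homogeneous binary form with nonnegative coefficients ``log-concave with no internal zeros'' is a closed condition, since the log-concavity inequalities $a_j^2\ge a_{j-1}a_{j+1}$ are closed, while if $f^{(\nu)}\to f$ with each $f^{(\nu)}$ log-concave and strictly positive on a fixed range of exponents, then concavity of $j\mapsto\log a^{(\nu)}_j$ on that range, together with the boundedness of the endpoint coefficients away from $0$, prevents an interior coefficient of $f$ from vanishing. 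As restriction to a fixed plane spanned by nonnegative vectors is continuous, it suffices to treat $M_X$ for $X$ irreducible.

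Set $\eta_i:=h_i|_X$, a globally generated---hence nef---class with $\eta_i^{\,n_i+1}=0$. Substituting $x_i\mapsto v_is+w_it$ with $v,w\in\Rbb_{\ge0}^k$ and using $\eta_i^{\,n_i+1}=0$ gives
$$M_X(v_1s+w_1t,\dots,v_ks+w_kt)\ =\ \int_X\ \prod_{i=1}^{k}\bigl(1-(v_is+w_it)\eta_i\bigr)^{-1},$$
so $c_j:=[s^{d-j}t^j]M_X(vs+wt)=\sum_{|m|=d}\mu_m\,p_{m,j}(v,w)$ with $\mu_m=\int_X\prod_i\eta_i^{m_i}\ge0$ and $p_{m,j}(v,w)=[s^{d-j}t^j]\prod_i(v_is+w_it)^{m_i}\ge0$. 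Because in total degree $d$ the coefficient of $\prod_i\eta_i^{m_i}$ in $\bigl(1-\sum_i(v_is+w_it)\eta_i\bigr)^{-1}$ is $\binom{d}{m}\ge1$ times the corresponding coefficient in $\prod_i\bigl(1-(v_is+w_it)\eta_i\bigr)^{-1}$, one obtains the one-sided estimate $c_j\le\binom{d}{j}\bigl(D_v^{\,d-j}\!\cdot D_w^{\,j}\bigr)_X$, where $D_v:=\sum_iv_i\eta_i$ and $D_w:=\sum_iw_i\eta_i$ are nef. Moreover $\mathrm{vol}_X(x):=\int_X(\sum_ix_i\eta_i)^d=d!\cdot N(M_X)(x)$ (the Br\"{a}nd\'{e}n--Huh normalization), the volume polynomial of $X$ with respect to the nef classes $\eta_i$; by \cite[Theorem~4.6]{MR4172622} it is Lorentzian, so the numbers $\bigl(D_v^{\,d-j}\!\cdot D_w^{\,j}\bigr)_X$ form a log-concave sequence with no internal zeros. (These are the Khovanskii--Teissier inequalities for nef classes, which for $\dim X=2$ are the Hodge index theorem.)

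From here I would deduce the inequalities $c_j^2\ge c_{j-1}c_{j+1}$. The mechanism is transparent for $\dim X=2$: there $c_0=M_X(v)\le D_v^{\,2}$, $c_2=M_X(w)\le D_w^{\,2}$, and a direct computation gives $c_1=D_v\!\cdot D_w+\sum_iv_iw_i(\eta_i^{\,2})_X\ge D_v\!\cdot D_w$, so
$$c_1^{\,2}\ \ge\ (D_v\!\cdot D_w)^2\ \ge\ D_v^{\,2}\,D_w^{\,2}\ \ge\ M_X(v)\,M_X(w)\ =\ c_0c_2,$$
using the Hodge index inequality in the middle and $0\le M_X(v)\le D_v^{\,2}$, $0\le M_X(w)\le D_w^{\,2}$ at the last step. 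The general case I would reduce to $\dim X=2$ by cutting $X$ with a general member $H$ of $|\cO_X(1,\dots,1)|$---irreducible by Bertini, with $M_{X\cap H}(x)=\int_X\bigl(\sum_i\eta_i\bigr)\prod_i(1-x_i\eta_i)^{-1}$---and inducting on $\dim X$. To rule out internal zeros of $M_X(vs+wt)$ I would use Debarre's connectedness theorem \cite{MR1420927}, which makes the support of $M_X$ M-convex: then $\{\,j : c_j\ne0\,\}$, which is the union over $m$ in the support of the intervals on which the $(s,t)$-monomials of $\prod_i(v_is+w_it)^{m_i}$ survive, is itself an interval, since one can pass between two elements of the support by successive exchanges $e_i-e_j$, each of which alters these intervals only to an overlapping or adjacent one. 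Together with the first paragraph this shows that every covolume polynomial is sectional log-concave, which is Corollary~\ref{cor:slc}.

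The step I expect to be the main obstacle is the passage, in the previous paragraph, from the Lorentzian structure of $N(M_X)=\tfrac1{d!}\mathrm{vol}_X$ to the log-concavity inequalities for the \emph{un-normalized} polynomial $M_X$. Covolume polynomials are genuinely not Lorentzian, so \cite{MR4172622} cannot be quoted directly: the Br\"{a}nd\'{e}n--Huh normalization does not commute with the substitution $x_i\mapsto v_is+w_it$, and hence $M_X(vs+wt)$ is not literally a sequence of intersection numbers; bridging this requires the one-sided estimates above together with the dimensional induction. The second indispensable ingredient is the M-convexity of the support from Debarre's theorem, which is what yields the ``no internal zeros'' half of the statement---necessary because, in contrast to the log-concavity inequalities alone, a nonnegative log-concave sequence may still have two or more consecutive internal zeros.
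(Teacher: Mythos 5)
Your reduction to irreducible $X$ and your closedness argument for ``log-concave with no internal zeros'' are fine, but the central identity on which everything else rests is false. The class $\int_X\prod_i(1-x_i\eta_i)^{-1}$ picks out only the monomials $\eta^m$ with $|m|=\dim X$, so it equals the \emph{reversed} polynomial $u_1^{n_1}\cdots u_k^{n_k}M_X(1/u_1,\dots,1/u_k)$ --- the polynomial $Q$ of Proposition~\ref{prop:covtolor}, whose coefficients are $\int_X\eta_1^{m_1}\cdots\eta_k^{m_k}$ --- and not the covolume polynomial $M_X=P_{[X]}$, whose coefficients are the complementary numbers $\int_X\eta_1^{n_1-m_1}\cdots\eta_k^{n_k-m_k}$ and whose degree is $\codim X$, not $\dim X$. (Already for $X=\Pbb^{n_1}\times\cdots\times\Pbb^{n_k}$ the left side of your identity is $1$ while the right side is $\prod_i(v_is+w_it)^{n_i}$.) Hence your $c_j$, the estimate $c_j\le\binom{d}{j}\,D_v^{\,d-j}\!\cdot D_w^{\,j}$, and the $\dim X=2$ verification all concern the volume side, i.e.\ exactly the polynomial whose normalization \emph{is} Lorentzian; the difficulty of the statement is that the covolume polynomial's restriction to a nonnegative $2$-plane is \emph{not} a sequence of intersection numbers of the nef classes $D_v,D_w$ on $X$, so Khovanskii--Teissier/Hodge index on $X$ does not apply to it directly. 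This is precisely the covolume-versus-volume distinction illustrated by Examples~\ref{ex:notlor} and~\ref{ex:notcov}.

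Even leaving aside which polynomial is being studied, the quantitative bridge is missing: upper bounds $c_j\le\binom{d}{j}I_j$ for \emph{all} $j$ can never yield $c_j^2\ge c_{j-1}c_{j+1}$ (you also need lower bounds), your $\dim X=2$ computation produces a lower bound only for the middle coefficient in that special case, and the proposed induction by general hyperplane sections is not carried out --- and it is exactly the hard step, since the $c_j$ are positive combinations of intersection numbers rather than intersection numbers. Also, Debarre's theorem does not by itself give M-convexity of the support of $M_X$; in the paper that comes from Proposition~\ref{prop:covtolor} via Corollary~\ref{cor:Mconv} (the fact is available, so your interval argument for ``no internal zeros'' could be salvaged, but the attribution is off). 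The paper's route avoids all of these issues: Theorem~\ref{thm:nonnegcc} uses Segre--Veronese embeddings together with Debarre's connectedness theorem, applied to a general translate of $W$, to show that $P(A\binom uv)$ is itself a bivariate covolume polynomial --- a positive multiple (or limit) of the multidegree sequence of an irreducible subvariety of $\Pbb^n\times\Pbb^n$ --- and then Lemma~\ref{lem:twovac} invokes Huh's theorem on such multidegree sequences, which delivers log-concavity and absence of internal zeros simultaneously. The Hodge-theoretic input thus enters on the auxiliary variety $\varphi^{-1}({}^\gamma W)$, not on $X$ itself; to repair your argument you would essentially have to reconstruct that reduction.
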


In particular, the normalization of a covolume polynomial in two variables is 
necessarily Lorentzian. In higher dimension, the relation between covolume 
polynomials and Lorentzian polynomials is subtler. Examples show that covolume 
polynomials are not necessarily Lorentzian, even after normalization
(Example~\ref{ex:notlor}), and Lorentzian polynomials are not necessarily covolume
polynomials
(Example~\ref{ex:notcov}). With each covolume polynomial there is associated a
Lorentzian polynomial (Proposition~\ref{prop:covtolor}); in fact covolume polynomials
are {\em dually Lorentzian\/} in the sense of~\cite{dualLor}. 
It follows that the support 
of a covolume polynomial is M-convex in the sense of discrete convex 
analysis~\cite{MR1997998}, i.e., a `polymatroid' (Corollary~\ref{cor:Mconv}).
In analogy with the behavior of Lorentzian polynomials, we prove 
that if $f(\ut)$ is a covolume polynomial, then so is $f(A\ut)$ for any matrix $A$ with 
nonnegative entries (Theorem~\ref{thm:nonnegcc}). Sectional log-concavity is
a corollary of this result, which has other convenient consequences, such as
the fact that the product of two covolume polynomials is a covolume polynomials.
The set of Lorentzian polynomials is similarly preserved by nonnegative changes 
of variables by~\cite[Theorem~2.10]{MR4172622}. By~\cite[Theorem~5.12]{dualLor}, 
so is the set of dually Lorentzian polynomials.\smallskip

The second part of the paper deals with {\em Segre zeta functions\/} of closed
subschemes of products of projective spaces. Segre classes are a key ingredient
in Fulton-MacPherson intersection theory. In previous work (\cite{MR3709134})
we constructed a univariate rational function~$\zeta_I(t)$ encoding the information
of the Segre class of the subscheme defined in projective spaces of arbitrarily large 
dimension by a set $I$ of homogeneous polynomials. In this paper we
extend the construction to subschemes of products of projective spaces; if $Z$ is a 
closed subscheme of $\Pbb^{n_1}\times \cdots \times \Pbb^{n_\ell}$
defined by a set $I$ of multihomogeneous polynomials, the corresponding 
`Segre zeta function' $\zeta_I(t_1,\dots, t_\ell)$ of $Z$ may be 
defined as a power series in $\ell$ variables.
This power series only depends on the ideal generated by $I$,
and on the subscheme defined by $I$ for $n_i\gg 0$, so we may refer to the 
corresponding series $\zeta_I$ as the Segre zeta function of an ideal or of a scheme;
but it is useful to keep track of a specific finite generating set $I$.
We point out (Theorem~\ref{thm:szf2})
that it $\zeta_I$ is  a rational function, with poles controlled by the multidegrees of 
elements of $I$. Thus, once a generating set $I$ is chosen, the interesting 
part of the information of a Segre zeta function is its numerator; we will simply refer to this
as the `numerator of $\zeta_I$'.
Constraints on the possible numerators of Segre zeta functions yield nontrivial information 
on Segre classes, and this is our motivation in studying them. We note that the case of
products of projective spaces is `universal' in certain contexts: for example, one can 
associate a Segre zeta function to {\em monomial\/} schemes in arbitrary varieties,
but every such power series is the Segre zeta function of a monomial scheme in a product
of projective spaces. 
(Indeed, the Segre zeta function only depends on the Newton region associated with the
monomial subscheme, by~\cite[Theorem~1.1]{MR3576538}.)
Thus, the situation considered in this paper is less restrictive than may appear.
Our main result in this part is:

\begin{thmrm}\label{thm:mainII}
With notation as above, the homogenization of the numerator of $1-\zeta_I$ is a 
covolume polynomial. If the projective normal cone of $Z$ is irreducible, then the 
homogenization of the numerator of $\zeta_I$ is a covolume polynomial.
\end{thmrm}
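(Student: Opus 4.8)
The plan is to exhibit both homogenized numerators as cohomology classes of irreducible subvarieties of products of projective spaces (or limits of positive multiples of such), reducing — via the blow-up description of Segre classes — to the observation that, up to a twist, a globally generated bundle has an irreducible general zero locus. First I would fix a generating set $F_0,\dots,F_r$ of $I$ of multidegrees $\mathbf d_0,\dots,\mathbf d_r\in\Zbb_{\ge 0}^\ell$ and, for $N_1,\dots,N_\ell\gg 0$, pass to the stable lift $Z_N:=V(F_0,\dots,F_r)\subseteq\Pbb^N:=\Pbb^{N_1}\times\cdots\times\Pbb^{N_\ell}$, which is the zero scheme of the tautological section $\sigma$ of $E_N:=\bigoplus_{i=0}^r\mathcal O(\mathbf d_i)$; crucially, $E_N$ is \emph{globally generated} since every $\mathbf d_i\ge 0$. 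By the construction of $\zeta_I$ and Theorem~\ref{thm:szf2}, the common denominator $\prod_i(1+\mathbf d_i\cdot\mathbf t)$ records $c(E_N)=\prod_i(1+\mathbf d_i\cdot h)$, so the numerators of $\zeta_I$ and of $1-\zeta_I$ record, in $A_*(\Pbb^N)$, the classes $c(E_N)\cap s(Z_N,\Pbb^N)$ and $c(E_N)\cap\bigl([\Pbb^N]-s(Z_N,\Pbb^N)\bigr)$ respectively, and homogenizing corresponds to the analogous computation on a product with one further projective factor.

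Next I would bring in the blow-up $\pi\colon\widetilde V:=\mathrm{Bl}_{Z_N}\Pbb^N\to\Pbb^N$ with exceptional Cartier divisor $\mathcal E$ — which \emph{is} the projective normal cone $\mathrm{Proj}(C_{Z_N}\Pbb^N)$ — so that $s(Z_N,\Pbb^N)=\pi_*\bigl(\tfrac{\mathcal E}{1+\mathcal E}\cap[\widetilde V]\bigr)$. The surjection $\bigoplus_i\mathcal O(-\mathbf d_i)\twoheadrightarrow\mathcal I_{Z_N}$, $i$-th generator $\mapsto F_i$, induces by the universal property of the blow-up a morphism $\rho\colon\widetilde V\to\mathbb P:=\mathrm{Proj}_{\Pbb^N}\mathrm{Sym}\bigl(\bigoplus_i\mathcal O(-\mathbf d_i)\bigr)$ with $\rho^*\mathcal O_\mathbb P(1)=\mathcal O_{\widetilde V}(-\mathcal E)$, restricting over $\mathcal E$ to the natural closed embedding of $\mathrm{Proj}(C_{Z_N}\Pbb^N)$ into $\mathbb P$. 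Writing $v:=c_1(\mathcal O_\mathbb P(1))$, dualizing the tautological sequence $0\to S\to p^*\bigl(\bigoplus_i\mathcal O(-\mathbf d_i)\bigr)\to\mathcal O_\mathbb P(1)\to 0$ gives $0\to\mathcal O_\mathbb P(-1)\to p^*E_N\to S^\vee\to 0$, whence $c(S^\vee)=\dfrac{p^*c(E_N)}{1-v}$; and $S^\vee$, being a quotient of the globally generated bundle $p^*E_N$, is itself globally generated on $\mathbb P$. Moreover, choosing $\mathbf D\ge\mathbf d_i$ for all $i$ we have $\mathbb P=\mathrm{Proj}_{\Pbb^N}\mathrm{Sym}\bigl(\bigoplus_i\mathcal O(\mathbf D-\mathbf d_i)\bigr)$, and a choice of generators of the globally generated bundle $\bigoplus_i\mathcal O(\mathbf D-\mathbf d_i)$ gives a closed embedding $\mathbb P\hookrightarrow\Pbb^M\times\Pbb^{N_1}\times\cdots\times\Pbb^{N_\ell}$; under it, $S^\vee$ becomes a globally generated bundle on a subvariety of a product of projective spaces.

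With $\rho^*(-v)=\mathcal E$ and the projection formula one then obtains
$$c(E_N)\cap\bigl([\Pbb^N]-s(Z_N,\Pbb^N)\bigr)=p_*\bigl(c(S^\vee)\cap[W]\bigr),\qquad c(E_N)\cap s(Z_N,\Pbb^N)=p_*\bigl(c(S^\vee)\cap\rho_*[\mathcal E]\bigr),$$
where $p\colon\mathbb P\to\Pbb^N$, $\rho_*[\widetilde V]=[W]$ for an \emph{irreducible} subvariety $W\subseteq\mathbb P$ (since $\widetilde V$ is irreducible and $\rho$ is birational onto its image), and $\rho_*[\mathcal E]$ is the pushforward of the cycle of the projective normal cone. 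I would then carry out the same computation on $\Pbb^M\times\Pbb^{N_1}\times\cdots\times\Pbb^{N_\ell}$ rather than on $\Pbb^N$, with the twist by $\mathbf D$ and the homogenizing variable built in, so that the first identity exhibits the homogenized numerator of $1-\zeta_I$ as the class of the zero scheme of a general section of a globally generated bundle built from $S^\vee$ over the irreducible variety $W$ — hence, after enlarging the ambient product so there is enough room, as the class of an irreducible subvariety of a product of projective spaces, i.e.\ a covolume polynomial; this step is unconditional because $W$ is always irreducible. For $\zeta_I$ the same argument applies with $W$ replaced by the support of $\rho_*[\mathcal E]$, which is (up to a positive multiplicity) an irreducible subvariety precisely when $\mathrm{Proj}(C_{Z_N}\Pbb^N)$ is irreducible — which is the hypothesis — and this cannot be weakened, because covolume polynomials are \emph{not} closed under addition, so a reducible projective normal cone cannot be treated one component at a time.

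The main obstacle I anticipate is exactly this positivity-and-homogenization bookkeeping: a priori $c(S^\vee)\cap[W]$ and $c(S^\vee)\cap\rho_*[\mathcal E]$ are alternating expressions (the class $v$ is only relatively ample on $\mathbb P$) and are visibly effective only after pushforward to $\Pbb^N$; one must verify that, performed on the product $\Pbb^M\times\Pbb^{N_1}\times\cdots\times\Pbb^{N_\ell}$ with the extra variable and the twist by $\mathbf D$ in place, they are represented by \emph{irreducible} subvarieties (or limits of positive multiples of such) rather than merely by effective cycles. This is where global generation of $E_N$ (equivalently $\mathbf d_i\ge 0$) and of $S^\vee$ is indispensable, and it is also the point at which the irreducibility of the projective normal cone enters and the two statements genuinely diverge.
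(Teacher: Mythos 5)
Your reduction is essentially the paper's: you identify the numerators of $\zeta_I$ and $1-\zeta_I$ with the classes $c(E_N)\cap s(Z_N,\Pbb^N)$ and $c(E_N)\cap\bigl([\Pbb^N]-s(Z_N,\Pbb^N)\bigr)$, pass to the blow-up, and rewrite them as $p_*\bigl(c(S^\vee)\cap\rho_*[\mathcal E]\bigr)$ and $p_*\bigl(c(S^\vee)\cap[W]\bigr)$, where your $S^\vee$ is exactly the paper's excess bundle $\cR=\nu^*\cN/\cO(E)$, globally generated, and $W$ is the (irreducible) image of the blow-up; the irreducibility hypothesis on the projective normal cone enters in the same place as in the paper. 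The gap is in the final step, which is precisely the content of Proposition~\ref{prop:irremap} and is the heart of the proof: converting ``pushforward of the total Chern class of a globally generated bundle on an irreducible variety'' into a covolume polynomial, homogenizing variable included. Your mechanism --- embed $\mathbb P$ into $\Pbb^M\times\Pbb^{N_1}\times\cdots\times\Pbb^{N_\ell}$ via the $\mathbf D$-twist and represent the class by ``the zero scheme of a general section of a globally generated bundle built from $S^\vee$,'' asserted to be irreducible --- does not work as stated. First, a general section of a globally generated bundle on an irreducible variety can have reducible zero scheme (already for a line bundle whose linear system is composed with a pencil), so irreducibility is exactly the point that cannot be obtained by ``generality'' alone. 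Second, pushing $c(S^\vee)\cap[W]$ forward along an embedding of $\mathbb P$ into $\Pbb^M\times\Pbb^{N_1}\times\cdots\times\Pbb^{N_\ell}$ does not compute the homogenized numerator: one obtains a mixed-dimension class whose coefficients involve intersections with the restriction of the $\Pbb^M$-hyperplane class to $W$, rather than the required bookkeeping in which the codimension-$i$ part of $c(S^\vee)\cap[W]$ is paired with $t_0^{\,r-i}$.

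The missing idea is the paper's kernel construction. Choose a finite-dimensional space $V$ of global sections generating $S^\vee$ and, over $X=\Til{V}$ (for $R$) or $X=\mathcal E$ (for $P$), consider $\Pbb(\cK)\subseteq\Pbb(V)\times X$ with $\cK=\ker\bigl(V\otimes\cO_X\to S^\vee\bigr)$. This is the zero scheme of the tautological \emph{regular} section of $p_2^*S^\vee\otimes\cO_{\Pbb(V)\times X}(1)$, so its class equals the top Chern class $\sum_{i}p_1^*H^{\,r-i}\,p_2^*c_i(S^\vee)$, which packages all Chern classes of $S^\vee$ together with the homogenizing class $H$; and it is irreducible not by a Bertini-type argument but structurally, being a projective bundle over the irreducible $X$. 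Pushing forward to $\Pbb(V)\times\Pbb^{N_1}\times\cdots\times\Pbb^{N_\ell}$ then yields a positive multiple of the class of an irreducible subvariety, i.e.\ a covolume polynomial whose de-homogenization is the numerator in question. With this lemma supplied, your outline becomes the paper's proof; without it, the step you yourself flag as the ``main obstacle'' remains open.
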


See Theorems~\ref{thm:R} and~\ref{thm:P} for more complete statements. 
This implies that e.g., the numerator of $1-\zeta_I$ is sectional log-concave
and M-convex in the natural sense. In particular, the coefficients of the numerator
of $1-\zeta_I(t)$ in the univariate case (that is, the case of subschemes of $\Pbb^n$
studied in~\cite{MR3709134}) necessarily form a log-concave sequence of nonnegative
integers with no internal zeros. 

Analogous consequences hold for the numerator of $\zeta_I$ under an irreducibility
hypothesis on the normal cone of the subscheme defined by $I$. Some such 
hypothesis is necessary: examples show 
that the numerator of the Segre zeta function is {\em not\/} M-convex or sectional 
log-concave in general (Example~\ref{ex:Pnotcov}), implying that its homogenization 
is not necessarily a covolume or Lorentzian polynomial. Whether the irreducibility 
hypothesis stated in Theorem~\ref{thm:mainII} can be
significantly weakened is an interesting question. Brandon Story (\cite{story}) has 
verified that in the univariate case, i.e., for $\ell=1$, the coefficients of the numerator 
of the Segre zeta function form a log-concave sequence without internal zeros for 
several families of subschemes $Z\subseteq \Pbb^{n_1}$, including many cases of
reducible subschemes.

The situation with $1-\zeta_I$, whose numerator is a covolume polynomial by
Theorem~\ref{thm:mainII}, is also intriguing. Experimental evidence suggests the 
following.

\begin{conjnono}
Let $Z$ be a closed subscheme of $\Pbb^{n_1}\times \cdots \times \Pbb^{n_\ell}$,
and $I$ a generating set for its ideal.
Then the normalization of the homogenization of the numerator of 
$1-\zeta_I(t_0,\dots, t_\ell)$ is Lorentzian.
\end{conjnono}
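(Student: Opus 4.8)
The plan is to build on Theorem~\ref{thm:mainII}, which already gives that $H$, the homogenization of the numerator of $1-\zeta_I$, is a covolume polynomial in $t_0,\dots,t_\ell$; the conjecture asks for the strictly stronger conclusion that its normalization $N(H)$, in the sense of~\cite[Corollary~3.7]{MR4172622}, is Lorentzian. Since Example~\ref{ex:notlor} shows that an arbitrary covolume polynomial need not have this property, the argument must use the geometry producing $1-\zeta_I$. First I would settle the case $\ell=1$ (subschemes of a single $\Pbb^{n_1}$), where $H=H(t_0,t_1)$ is a binary form of some degree $d$. Here the Br\"{a}nd\'{e}n--Huh normalization has the pleasant effect of converting the ``ultra'' log-concavity built into the definition of Lorentzianity into \emph{ordinary} log-concavity of the coefficient sequence: normalization multiplies the coefficient of $t_0^{d-i}t_1^i$ by $\binom{d}{i}/d!$, which turns the binomially weighted log-concavity in the Lorentzian criterion into plain log-concavity. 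Thus $N(H)$ is Lorentzian exactly when the coefficients of $H$ are nonnegative, have no internal zeros, and form a log-concave sequence --- which is precisely sectional log-concavity of $H$ along the plane spanned by the two coordinate vectors. Hence for $\ell=1$ the conjecture follows already from Theorem~\ref{thm:mainII} and Corollary~\ref{cor:slc}; the complete intersection case in particular collapses to the classical inequalities $e_k^2\ge e_{k-1}e_{k+1}$ for elementary symmetric functions. The real content of the conjecture is therefore in the range $\ell\ge 2$, where Lorentzianity of $N(H)$ requires the full family of Hessian signature conditions on all iterated partial derivatives of order $d-2$, whereas sectional log-concavity controls only the two-dimensional sections along nonnegative vectors.

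For $\ell\ge 2$ the natural strategy is to produce $N(H)$ --- up to operations known to preserve Lorentzianity, namely nonnegative linear changes of variables~\cite[Theorem~2.10]{MR4172622}, products, limits, and substitution of zero for a variable --- as an honest \emph{volume} polynomial, so that~\cite[Theorem~4.6]{MR4172622} applies. Concretely: writing $Y=\Pbb^{n_1}\times\cdots\times\Pbb^{n_\ell}$ and supposing $I$ generates the ideal of $Z\subseteq Y$ in multidegrees $\underline d^{(1)},\dots,\underline d^{(m)}$, the scheme $Z$ is the zero locus of a section of $E=\bigoplus_k\cO(\underline d^{(k)})$. As in the proof of Theorem~\ref{thm:mainII}, the numerators of $\zeta_I$ and of $1-\zeta_I$ are governed by the blow-up $\Til Y:=\mathrm{Bl}_Z Y\subseteq\Pbb(E^\vee)$ and by powers of the restriction of the tautological class on $\Pbb(E^\vee)$. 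After adding a large multiple of an ample class pulled back from $Y$, that class becomes very ample, so $\Til Y$ embeds into $Y\times\Pbb^N$; the multidegree polynomial of the irreducible variety $\Til Y$ with respect to the hyperplane classes there is, by definition, a volume polynomial, hence Lorentzian after normalization. One would then recover $H$ from this multidegree polynomial by a substitution encoding the relation $\cO_{\Pbb^N}(1)|_{\Til Y}=\cO_{\Pbb(E^\vee)}(1)|_{\Til Y}\otimes(\text{pullback from }Y)$, together with clearing of the denominator $Q$ of $\zeta_I=P/Q$ --- a product of linear forms $1+\sum_{i=1}^\ell d^{(k)}_i t_i$ determined by the degrees of the chosen generators.

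The hard part --- and the reason this is stated as a conjecture rather than a theorem --- is exactly this last step. Passing from $\zeta_I=P/Q$ to the numerator $Q-P$ of $1-\zeta_I$ amounts to dividing the volume polynomial of $\Til Y$ by $Q$, and division is precisely the operation that takes volume polynomials to merely covolume ones while destroying Lorentzianity; this is the mechanism behind Example~\ref{ex:notlor}. One therefore has to show that for a Segre zeta numerator this division is harmless, and I see two possible routes. One is to exhibit $N(H)$ directly as the volume polynomial of a further irreducible variety --- a natural candidate being one assembled from the exceptional divisor of $\Til Y$ and its projective normal geometry into a generating function. The other is to isolate and prove the cleaner-looking statement, which would imply the conjecture, that the homogenization and normalization of the polynomial recording the total Chern class $c(E)\cap[X]$ of a globally generated bundle $E$ on an irreducible $X\subseteq\prod_i\Pbb^{n_i}$ is Lorentzian. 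I expect the genuine difficulty to reside in this last statement: proving it would presumably require a Khovanskii--Teissier / Hodge-index argument on $\Pbb(E)$, identifying each order-$(d-2)$ derivative of the normalized Chern polynomial with an intersection form of Lorentzian signature on a surface cut out by very ample divisors --- and the connectedness input of~\cite{MR1420927} underlying Theorem~\ref{thm:mainII} supplies only the two-dimensional, ``sectional'', version of this.
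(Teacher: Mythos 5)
The statement you are proving is stated in the paper as a \emph{conjecture}: the paper offers no proof, only the observation that the $\ell=1$ case follows from Theorem~\ref{thm:R} together with \cite[Example~2.26]{MR4172622} (see the discussion around Example~\ref{ex:RnotLor} and Question~\ref{que:norLor}), plus experimental verification for many monomial ideals. Your treatment of $\ell=1$ is correct and coincides with that remark: for a binary form, normalization turns the ultra-log-concavity in the Lorentzian condition into ordinary log-concavity of the coefficients, which is exactly what Theorem~\ref{thm:R} (via Lemma~\ref{lem:twovac}) provides. But for $\ell\ge 2$ what you give is a strategy sketch, not a proof, and you say so yourself: the step of ``dividing by $Q$'' (equivalently, showing that the passage from the volume polynomial of the blow-up to the numerator $R=Q-P$ does not destroy Lorentzianity of the normalization) is precisely the open content of the conjecture, and nothing in the proposal closes it.

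Moreover, the second route you propose is too strong as stated and is in fact false. You suggest reducing the conjecture to: for any globally generated bundle $E$ on an irreducible $X$ mapping properly to $\prod_i\Pbb^{n_i}$, the normalization of the homogenization of the polynomial recording $q_*(c(E)\cap[X])$ is Lorentzian. Taking $E$ trivial (or of rank zero) and $X=W$ the threefold of Example~\ref{ex:notlor}, this polynomial is $t_0^{\rk E}\,P_{[W]}$ (or $P_{[W]}$ itself), and since derivatives of Lorentzian polynomials are Lorentzian, its normalization being Lorentzian would force $N(P_{[W]})$ to be Lorentzian --- which that example explicitly refutes. So any proof along these lines must use the very particular structure of the bundle $\cR$ occurring in the Segre-zeta computation of Theorem~\ref{thm:R} (a rank-$r$ quotient of a direct sum of globally generated line bundles by $\cO(E)$ on a blow-up of $\Pbb^{\uN}$ itself, so that $q$ is birational onto the target), not merely global generation and irreducibility; identifying and exploiting that extra structure is the missing idea, and the proposal does not supply it.
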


Since there are covolume polynomials whose normalization is not Lorentzian, this 
conjecture does not follow formally from Theorem~\ref{thm:mainII}; if true, it appears 
to be a novel and unexpected phenomenon. The reader could compare this 
conjecture with Conjecture~15 in~\cite{MR4419063}, to the effect that Schubert 
polynomials are expected to have Lorentzian normalization.  Schubert polynomials 
also are covolume polynomials (Example~\ref{ex:Schub}); this fact alone does 
not explain why their normalizations should be Lorentzian. 

This may be a good place to quote Karim Adiprasito, June Huh, and Eric Katz
(\cite{MR3586249}): 
{\em ``We believe that behind any log-concave sequence that appears in nature there is\dots 
a `Hodge structure' responsible for the log-concavity.''\/}
We wonder whether the sectional log-concavity of the numerator of $1-\zeta_I$ may
signal the presence of a new structure in the sense meant in this reference. These 
polynomials (and their normalizations) are not directly expressed as volume polynomials 
of projective varieties.

Theorem~\ref{thm:mainII} follows from a general statement that may have different 
applications. Let $X$ be an irreducible variety, 
$q: X \to \Pbb^{n_1}\times \cdots \times \Pbb^{n_\ell}$ a proper map,
and let $\cR$ be a globally generated vector bundle on $X$. We can write 
\[
q_*(c(\cR)\cap [X]) = \sum_{0\le i_j\le n_j} a_{i_1\dots i_\ell} h_1^{i_1}\cdots h_\ell^{i_\ell}
\cap [\Pbb^{n_1}\times\cdots \times \Pbb^{n_\ell}]\saf,
\]
where $h_j$ is the pull-back of the hyperplane class from the $j$-th factor of the product.
Let
\[
P(t_1,\dots, t_\ell)=\sum_{i_k\le s_k} a_{i_1\dots i_\ell} t_1^{i_1}\cdots t_\ell^{i_\ell}\saf.
\]
\begin{propnono}
The homogenization of $P(t_1,\dots, t_\ell)$ is a covolume polynomial.
\end{propnono}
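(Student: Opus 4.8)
The plan is to exhibit the homogenization of $P$---or a positive multiple of it---as the multidegree polynomial of an irreducible subvariety of a product of projective spaces. The mechanism is that, because $\cR$ is globally generated, its total Chern class is the total \emph{Segre} class of an auxiliary globally generated bundle. I would choose a surjection $\cO_X^{\oplus(N+1)}\twoheadrightarrow\cR$ with $N\gg 0$ and let $\cK$ be the kernel, a vector bundle of rank $N+1-\rkR$, where $\rkR$ denotes the rank of $\cR$. From $0\to\cK\to\cO_X^{N+1}\to\cR\to0$ one gets $c(\cR)=c(\cK)^{-1}=s(\cK)$, and dualizing the sequence exhibits $\cK^\vee$ as a quotient of $\cO_X^{N+1}$, hence globally generated. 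Equivalently, on the projective bundle $\rho\colon\Pbb(\cK)\to X$ of lines in $\cK$ the tautological quotient $\cO_{\Pbb(\cK)}(1)$ is globally generated---already by the $N+1$ sections coming from $\cO_X^{N+1}\twoheadrightarrow\cK^\vee=\rho_*\cO_{\Pbb(\cK)}(1)$---so these sections define a morphism $\psi\colon\Pbb(\cK)\to\Pbb^N$ with $\psi^*\cO(1)=\cO_{\Pbb(\cK)}(1)$; concretely $\psi$ is the second projection on the closed embedding $\Pbb(\cK)\hookrightarrow X\times\Pbb^N$ induced by the sub-bundle $\cK\subseteq\cO_X^{N+1}$.

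Set $\Phi:=(\psi,q\circ\rho)\colon\Pbb(\cK)\to\Pbb^N\times\Pbb^{n_1}\times\cdots\times\Pbb^{n_\ell}$. Since $X$ is irreducible, so is $\Pbb(\cK)$, hence so is $W:=\overline{\Phi(\Pbb(\cK))}$. Writing $\xi=c_1(\cO_{\Pbb(\cK)}(1))$ and using the projection formula together with the Segre identity $\rho_*\bigl(\xi^{(\rk\cK-1)+k}\cap[\Pbb(\cK)]\bigr)=s_k(\cK)\cap[X]=c_k(\cR)\cap[X]$, one computes that pairing $\Phi_*[\Pbb(\cK)]$ against complementary powers of the hyperplane classes gives: the coefficient of $H^{a}h_1^{i_1}\cdots h_\ell^{i_\ell}$ in $\Phi_*[\Pbb(\cK)]$ is $a_{i_1\dots i_\ell}$, with $a=\bigl(\sum_j n_j-\dim X+\rkR\bigr)-\sum_k i_k$. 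Hence, since $\Phi_*[\Pbb(\cK)]=\deg(\Phi)\,[W]$ when $\Phi$ is generically finite onto its image, the multidegree polynomial of the irreducible variety $W$ equals, up to the positive factor $\deg\Phi$ and a monomial power of the variable $t_0$ attached to the $\Pbb^N$ factor, the homogenization of $\sum_{0\le i_j\le n_j}a_{i_1\dots i_\ell}t_1^{i_1}\cdots t_\ell^{i_\ell}$; the excess power of $t_0$ is removed by a general linear projection of the $\Pbb^N$ factor, which on multidegree polynomials uniformly lowers the $t_0$-exponents.

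To pass from this full polynomial to the truncation $P$, intersect $W$ with a general linear subspace $\Pbb^{s_j}\subseteq\Pbb^{n_j}$ in each factor: this multiplies the class by $\prod_j h_j^{n_j-s_j}$, kills exactly the monomials with some $i_j>s_j$, and leaves the surviving coefficients unchanged; by Bertini a general linear section of the irreducible $W$ is again irreducible (or a reduced point). The resulting irreducible subvariety of $\Pbb^N\times\Pbb^{s_1}\times\cdots\times\Pbb^{s_\ell}$ then has multidegree polynomial a positive rational multiple of the homogenization of $P$, and the Proposition follows.

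The delicate point is the assumption, tacit above, that $\Phi$---equivalently $q\times\id$ restricted to $\Pbb(\cK)\subseteq X\times\Pbb^N$---is generically finite onto its image; a dimension count shows this holds precisely when $\rkR\ge\dim X-\dim\overline{q(X)}$. When it fails, $\Phi_*[\Pbb(\cK)]$ vanishes and the computation degenerates. I would handle this by working with $W$ directly, or by first enlarging the target with an auxiliary factor $\Pbb^M$ and a closed embedding $X\hookrightarrow\Pbb^M$, which restores generic finiteness and produces a covolume polynomial in one extra variable whose appropriate section recovers the homogenization of $P$: realizing such a section as a limit of positive scalar multiples of nonnegative substitutions lets one invoke Theorem~\ref{thm:nonnegcc} and the closure of covolume polynomials under limits. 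Carrying out this reduction cleanly---and reconciling the residual monomial factors with the intended normalization of ``the homogenization''---is where I expect the real work of the proof to lie; in the intended applications (e.g.\ to Segre zeta functions) $\cR$ and $q$ are arranged so that the generic-finiteness hypothesis already holds.
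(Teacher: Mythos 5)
Your construction is, in essence, the paper's own proof: the paper takes a generating space of sections $V$, forms $\cK=\ker(V\otimes\cO_X\to\cR)$, regards $\Pbb(\cK)$ as an irreducible subscheme of $\Pbb(V)\times X$ (irreducible because $X$ is), and pushes its class forward under $\id\times q$ to $\Pbb(V)\times\Pbb^{n_1}\times\cdots\times\Pbb^{n_\ell}$; there $[\Pbb(\cK)]$ is computed as the top Chern class of $p_2^*\cR\otimes\cO(1)$ and expanded by the projection formula. Your computation of the multidegrees of $\Phi_*[\Pbb(\cK)]$ via $\rho_*\bigl(\xi^{\rk\cK-1+k}\cap[\Pbb(\cK)]\bigr)=s_k(\cK)\cap[X]=c_k(\cR)\cap[X]$ is the same calculation, and your formula for the coefficient of $H^a h_1^{i_1}\cdots h_\ell^{i_\ell}$ is correct.

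The complications in your closing paragraphs, however, are unnecessary, and the point you leave as ``the real work'' dissolves. First, generic finiteness of $\Phi$ is irrelevant: $\Phi_*[\Pbb(\cK)]$ is by definition a nonnegative multiple of the class of the irreducible variety $\overline{\Phi(\Pbb(\cK))}$, the multiple being $0$ exactly when $\Phi$ is not generically finite onto its image; in that case your own coefficient identity forces every $a_{i_1\dots i_\ell}$, hence $P$, to vanish, and the zero polynomial is (trivially) covolume. This is precisely how the paper concludes, so no auxiliary embedding $X\hookrightarrow\Pbb^M$ (which would anyway presuppose $X$ quasi-projective) and no limit argument are needed; also, your criterion ``precisely when $r\ge\dim X-\dim\overline{q(X)}$'' is not an exact characterization (take $\cR$ pulled back from the target), but none of this matters. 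Second, there is no excess power of $t_0$ to remove: $\Phi_*[\Pbb(\cK)]$ has codimension $r+\sum_j n_j-\dim X$, and its multidegree polynomial is exactly the homogenization of $P$ in that degree, which is the homogenization asserted (see the precise formulation in Proposition~\ref{prop:irremap}); moreover a general linear projection of the $\Pbb^N$ factor does not simply ``uniformly lower the $t_0$-exponents''---it requires birationality onto the image and discards the $t_0$-free coefficients---so that device would need justification if it were needed, which it is not. Third, the truncation step is also vacuous: the sum defining $P$ runs over all $0\le i_j\le n_j$ (the ``$s_k$'' in the Introduction is $n_k$), and the blanket Bertini claim you invoke would in any case fail when the image of $W$ in some factor has dimension at most one.
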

(See Proposition~\ref{prop:irremap}.)
In particular, this polynomial must be M-convex and sectional log-concave. For example, 
the degrees of the Chern classes of a globally generated vector bundle over projective 
space must form a log-concave sequence with no internal zeros. For remarks 
implying the same conclusion (independently), see~\cite[\S1.3(B), \S9]{MR4659531}, drawing 
on~\cite[\S1.6]{MR2095471}.\smallskip

While our main motivation in establishing Theorem~\ref{thm:mainII} is intrinsic to the
theory of Segre classes, we offer a concrete combinatorial application of this result
in the particular case of {\em monomial\/} schemes:
in the third part of the paper we consider {\em adjoint polynomials\/} of convex 
polytopes (or, equivalently, convex polyhedral cones). These polynomials were introduced
by Joe Warren (\cite{MR1431788}); the case of polygons had been considered earlier
by Eugene Wachspress (\cite{MR0426460}). Adjoint polynomials are used in the definition
of ``Wachspress coordinates'' of convex polytopes; we refer the reader to~\cite{MR4156995}
for a discussion of the context and recent progress in the study of Wachspress coordinates.
We also note that adjoint polynomials appear as numerators of canonical forms of certain
`positive geometries' introduced in the study of scattering amplitudes. This is nicely explained
in unpublished notes by Christian Gaetz~\cite{gaetz}; see~\cite{AaCFoP} for recent work on
adjoints from this perspective.

The adjoint curve of a polygon is the curve of minimal degree containing the points of
intersection of pairs of lines extending non-adjacent edges of the polygon:
\begin{center}
\includegraphics[scale=.3]{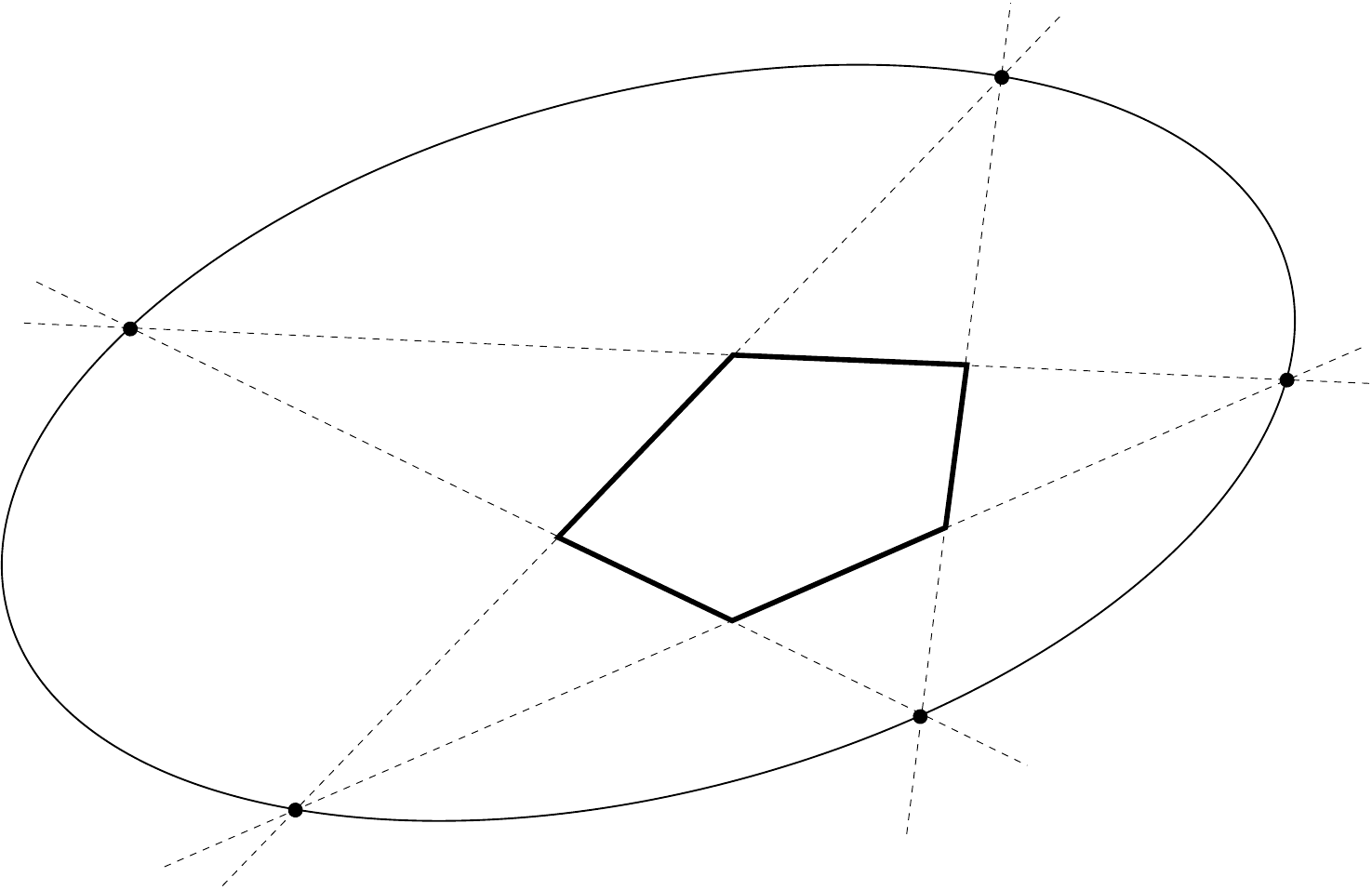}
\end{center}
an analogous description holds in any dimension. In~\cite{MR4156995}, Kathl\'en Kohn and 
Kristian Ranestad observe that the numerator of the Segre zeta function of a monomial
scheme may be interpreted as an adjoint polynomial of a related polytope. We sharpen
this result in Proposition~\ref{prop:adjasS}, by proving that the adjoint polynomial of every
convex polyhedral cone contained in the nonnegative orthant and containing a face of the
nonnegative orthant is the numerator of $1-\zeta_I$ for a suitable choice of $I$. 
Together with Theorem~\ref{thm:mainII}, this implies the following.

\begin{thmrm}\label{thm:adpo}
Adjoint polynomials of convex polyhedral cones contained in the nonnegative orthant and
sharing a face with it are covolume polynomials.
\end{thmrm}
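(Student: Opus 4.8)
The plan is to combine two ingredients already established in the paper. First, Proposition~\ref{prop:adjasS} identifies the adjoint polynomial of a convex polyhedral cone $C$ contained in the nonnegative orthant and sharing a face with it as the numerator of $1-\zeta_I$ for a suitable monomial ideal $I$: concretely, one chooses $I$ to be generated by the monomials cutting out the monomial scheme $Z$ whose associated polytope/cone recovers $C$, as in the Kohn--Ranestad interpretation. Second, Theorem~\ref{thm:mainII} (equivalently Theorem~\ref{thm:R}) asserts that for any generating set $I$ of the ideal of a closed subscheme $Z \subseteq \Pbb^{n_1}\times\cdots\times\Pbb^{n_\ell}$, the homogenization of the numerator of $1-\zeta_I$ is a covolume polynomial. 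Chaining these, the homogenization of the adjoint polynomial is a covolume polynomial, and hence so is the adjoint polynomial itself.

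Concretely, here is the order of steps I would carry out. \textbf{Step 1:} Given the cone $C \subseteq \Rbb^n_{\ge 0}$ sharing a face with the orthant, pass to the associated monomial subscheme $Z$ of a product of projective spaces $\Pbb^{n_1}\times\cdots\times\Pbb^{n_\ell}$ — the precise product being dictated by the combinatorics of $C$, and $Z$ being defined by a set $I$ of monomials read off from the facet/ray structure of $C$ (this is the content of Proposition~\ref{prop:adjasS}, which I would invoke). \textbf{Step 2:} Recall from Proposition~\ref{prop:adjasS} that the adjoint polynomial $\mathrm{adj}_C$ equals the numerator of $1-\zeta_I(t_1,\dots,t_\ell)$ (up to the normalization conventions fixed there). \textbf{Step 3:} Apply Theorem~\ref{thm:mainII} to this $Z$ and $I$: the homogenization of the numerator of $1-\zeta_I$ — i.e., the homogenization of $\mathrm{adj}_C$ — is a covolume polynomial. \textbf{Step 4:} Conclude. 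Since a polynomial is declared a covolume polynomial precisely when its homogenization is (covolume polynomials being a class of homogeneous polynomials, with the dehomogenized statement being shorthand for the homogeneous one), $\mathrm{adj}_C$ is a covolume polynomial, and the further assertions in the introduction — M-convexity of its support, sectional log-concavity, and the Lorentzian property after the change of variables effecting normalization — follow from Corollaries~\ref{cor:slc}, \ref{cor:Mconv} and the normalization discussion, exactly as stated for covolume polynomials in general.

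The main obstacle is not in this final assembly, which is essentially formal once the two cited results are in hand, but rather lies upstream in Proposition~\ref{prop:adjasS}: one must verify that \emph{every} convex polyhedral cone contained in the nonnegative orthant and sharing a face with it — not merely those arising from some a priori restricted combinatorial recipe — is realized as the relevant monomial scheme, and that the Segre zeta function numerator of that scheme matches the adjoint polynomial on the nose, including a correct bookkeeping of which projective factors appear and of the exponent ranges $i_k \le s_k$ truncating $P$. Establishing that dictionary between the face structure of $C$ and the monomial generators of $I$, and checking compatibility of normalizations, is where the real work sits; granting Proposition~\ref{prop:adjasS} and Theorem~\ref{thm:mainII}, the theorem above is immediate.
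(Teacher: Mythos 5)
Your overall strategy---identify the adjoint polynomial with the homogenized numerator of $1-\zeta_I$ via Proposition~\ref{prop:adjasS} and then quote Theorem~\ref{thm:R}---is exactly the paper's, and the final assembly is fine (by Remark~\ref{rem:adjasSR} the adjoint polynomial is the degree-$r$ homogenization of $R$, so Theorem~\ref{thm:R} makes it a covolume polynomial). The gap is in your Step 1: Proposition~\ref{prop:adjasS} does \emph{not} apply to an arbitrary convex polyhedral cone contained in the nonnegative orthant and sharing a face with it. It applies only to cones $\cP_S$ with $S=\{(1,v_1,\dots,v_\ell)\,|\,(v_1,\dots,v_\ell)\in F\}\cup\{\ue_1,\dots,\ue_\ell\}$ for a finite set $F\subseteq\Zbb_{\ge 0}^\ell$ of lattice points, because the monomial ideal $I$ is read off from integer exponent vectors; a general cone in the statement can have irrational vertex rays, and even rational rays only normalize to integral vectors $(d,v_1,\dots,v_\ell)$ with $d\ne 1$, not to height-one lattice points. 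You flag this as ``where the real work sits,'' but you do not carry it out, so as written your argument proves the theorem only for the special lattice family covered by Proposition~\ref{prop:adjasS}.

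The missing reduction is short but needs two further ingredients already in the paper. First, covolume polynomials are by definition closed under limits, and the adjoint polynomial depends continuously on spanning vectors of the vertex rays, so one may assume the rays are rational and, after rescaling the spanning vectors (which changes $\Abb_{\cP}$ only by a positive factor), that all non-coordinate vertices have the integral form $(d,v_1,\dots,v_\ell)$ for a single integer $d$. Second, setting $S'=\{\ue_1,\dots,\ue_\ell\}\cup\{(1,v_1,\dots,v_\ell)\,|\,(d,v_1,\dots,v_\ell)\in S\}$, Definition~\ref{def:adjpol} gives $\Abb_{\cP_S}(t_0,t_1,\dots,t_\ell)=\Abb_{\cP_{S'}}(dt_0,t_1,\dots,t_\ell)$; Proposition~\ref{prop:adjasS} and Theorem~\ref{thm:R} apply to $\cP_{S'}$, and Theorem~\ref{thm:nonnegcc} (invariance of covolume polynomials under nonnegative changes of variables) then transfers the conclusion to $\Abb_{\cP_S}$. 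Without the continuity step and this dilation-plus-Theorem~\ref{thm:nonnegcc} step, the passage from the monomial case to the stated generality does not go through.
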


See Theorem~\ref{thm:adjpol} for a more complete statement.
In particular, these adjoint polynomials are necessarily M-convex and sectional log-concave
(and in fact dually Lorentzian in the sense of~\cite{dualLor}).

It is conceivable that this result may extend to all convex polyhedral cones contained in
the nonnegative orthant. We prove that Theorem~\ref{thm:adpo} does extend to a certain 
class of cones which we call `orthantal'; see Definition~\ref{def:orthantal} and 
Corollary~\ref{cor:orthan}.

It is also conceivable that adjoint polynomials of all convex polyhedral cones contained in the 
nonnegative orthant are actually Lorentzian after normalization. The conjecture stated above
would imply this fact for orthantal cones. 

\smallskip 

{\em Acknowledgments.} This work
was supported in part by the Simons Foundation, collaboration grant~\#625561,
and by an FSU `COFRS' award.
The author thanks Caltech for the hospitality. The author also thanks Petter Br\"{a}nd\'{e}n, 
Kathl\'en Kohn, Matt Larson, Jonathan Monta\~no, and the referee for useful comments.


\section{Covolume polynomials}\label{partI}
We work over an algebraically closed field $k$. {\em Varieties\/} are assumed to be irreducible,
but as their irreducibility is key to the main objects studied in this paper, we occasionally remind 
the reader of this fact.

We consider (irreducible) subvarieties of products of projective spaces 
$\Pbb:=\Pbb^{n_0}\times \cdots\times
\Pbb^{n_\ell}$ and their classes in the Chow ring of $\Pbb$. We will let $h_j$ denote the pull-back 
of the hyperplane class from the $j$-th factor. Every class in the codimension-$d$
graded piece $A^d(\Pbb)$ of the Chow ring of $\Pbb$ may be written uniquely as
\[
\alpha=\sum_{\sum i_j=d} a_{i_0\dots i_\ell}\, h_0^{i_0}\cdots h_\ell^{i_\ell}\cap [\Pbb]\saf,
\]
where $0\le i_j\le n_j$ and $a_{i_0\dots i_\ell}\in \Zbb$. For any choice of
indeterminates $t_0,\dots, t_\ell$, we associate with~$\alpha$ the polynomial
\[
P_\alpha(t_0,\dots, t_\ell):=
\sum_{0\le i_j, \sum i_j=d} a_{i_0\dots i_\ell}\, t_0^{i_0}\cdots t_\ell^{i_\ell}\saf.
\]
In other words, $P_\alpha$ is the (unique) polynomial with integer coefficients and of degree 
$\le n_j$ in the $j$-th variable such that
\[
\alpha=P_\alpha(h_0,\dots, h_\ell)\in A(\Pbb)\cong 
\Zbb[h_0,\dots, h_\ell]/(h_0^{n_0+1},\dots, h_\ell^{n_\ell+1})\saf.
\]

\begin{defin}\label{def:covolume}
A polynomial $P$ with nonnegative real coefficients is a {\em covolume\/} polynomial if it is
a limit of polynomials of the form $c P_{[W]}$ for a positive real number $c$ and a closed 
subvariety~$W$ of a product of projective spaces.
\qede\end{defin}

\begin{remark}\label{rem:event}
If $W$ is a subvariety of $\Pbb:=\Pbb^{n_0}\times \cdots\times \Pbb^{n_\ell}$
with class $[W]=P(h_0,\dots, h_\ell)\cap [\Pbb]$, then for all $m_j\ge n_j$, the `cone' $W'$
over $W$ in $\Pbb':=\Pbb^{m_0}\times \cdots\times \Pbb^{m_\ell}$ is a subvariety
with class $[W']=P(h'_0,\dots, h'_\ell)\cap [\Pbb']$, where $h'_j$ denotes the pull-back of the
hyperplane class from the $j$-th factor of $\Pbb'$.

Thus, if $P=P_{[W]}$ with $W\subseteq 
\Pbb^{n_0}\times \cdots\times \Pbb^{n_\ell}$, then
we may in fact assume all $n_j\gg 0$, or even $n_0=\cdots=n_{\ell}\gg 0$.
\qede\end{remark}

\begin{example}
Constant polynomials (including $0$) trivially are covolume polynomials.
Linear polynomials with nonnegative real coefficients are covolume polynomials.
Indeed, by continuity we can assume that the coefficients are positive and rational, and 
up to a multiple we may then assume the coefficients to be integers. For positive
integers $a_0,\dots, a_\ell$, general sections of $\cO(a_0h_0+\cdots+a_\ell h_\ell)$ 
on $(\Pbb^n)^{\ell+1}$ are irreducible for $n\ge 2$ and any $\ell\ge 0$. The corresponding
polynomial is $a_0 t_0+\cdots+a_\ell t_\ell$.
\qede\end{example}

\begin{example}\label{ex:Schub}
For a nontrivial example, we note that the Schubert polynomial 
$\mathfrak S_w(t_0,\dots, t_\ell)$ associated with a permutation $w\in \mathcal S_{\ell+1}$ 
is a covolume polynomial. This follows from the proof of~\cite[Theorem~6]{MR4419063}.
\qede\end{example}

We are interested in log-concavity properties of covolume polynomials.  
We recall that a sequence $a_0,\dots,a_n$ of nonnegative real numbers is {\em log-concave\/}
if $\forall i$, $a_i^2\ge a_{i-1}a_{i+1}$. Further, the sequence `has no internal zeros'
if $\forall i\le j\le k$, $a_i a_k\ne 0 \implies a_j\ne 0$. We say that a polynomial is 
log-concave if its coefficients form a log-concave sequence with no internal
zeros. Powerful generalizations of this notion to polynomials in more variables
have been considered in the literature: among these {\em strongly\/} log-concave
polynomials (\cite{MR2683227}), {\em completely\/} log-concave polynomials (\cite{Vinzant}),
{\em Lorentzian\/} polynomials (\cite{MR4172622}). These notions are equivalent for
homogeneous polynomials, as proved in \cite[Theorem~2.30]{MR4172622}.
We will consistently refer to this property by the term {\em Lorentzian,\/} and
use~\cite{MR4172622} as a reference for facts concerning this notion.
According to~\cite{MR4172622}, a degree $d$ homogeneous polynomial $F$ with real
coefficients is {\em strictly Lorentzian\/} if all of its coefficients are positive and all
degree-$2$ partial derivatives of $F$ have nonsingular Hessian with exactly one
positive eigenvalue. (This condition is vacuously true if the degree of $F$ is $0$ or 
$1$.) The polynomial is {\em Lorentzian\/} if it is a limit of strictly Lorentzian polynomials. 
In~\cite[\S2.4]{MR4172622}, Br\"{a}nd\'{e}n and Huh prove that $F$ is Lorentzian 
if and only if its degree-$2$ partials have at most one positive eigenvalue and the 
support of $F$ is M-convex, that is, it satisfies an exchange property that will be 
recalled below.

We will establish that covolume polynomials share certain key properties with Lorentzian 
polynomials, and may also be viewed as a generalization of log-concave polynomials. 
Again, we say that a homogeneous bivariate polynomial
\[
P(u,v):=\sum_{i+j=d} a_{ij} u^i v^j\in \Rbb[u,v]
\]
is {\em log-concave\/} if and only if its coefficients $a_{d0},\dots, a_{0d}$ form a 
nonzero log-concave sequence of nonnegative real numbers with no internal zeros. 

\begin{defin}\label{def:slc}
For $\ell\ge 1$, a homogeneous polynomial $P(\ut)\in \Rbb[t_0,\dots, t_\ell]$ is 
{\em sectional log-concave\/} if for all $(\ell+1)\times 2$ matrices $A$ with 
nonnegative real entries, the polynomial $P(A\binom uv)$ is log-concave or 
identically~$0$.
\qede\end{defin}

A bivariate homogeneous polynomial is sectional log-concave if it is log-concave 
(this is not obvious, cf.~Remark~\ref{rem:lcslc}),
and in general a homogeneous polynomial is sectional log-concave if all its 
restrictions to `nonnegative' lower 
dimensional subspaces are sectional log-concave. 
Lorentzian polynomials are sectional log-concave;
in fact, if $P(\ut)$ is Lorentzian and $A$ is as above, then $P(A\binom uv)$ is 
{\em ultra-\/}log-concave as a consequence of 
\cite[Theorem~2.10 and Example~2.26]{MR4172622}.
(We recall that a homogeneous polynomial $\sum_{i+j=d}a_{ij}u^iv^j$ is 
`ultra'-log-concave if the sequence $i!j!a_{ij}$ is log-concave with no internal
zeros. This is a stronger condition than log-concavity.)
One of our main goals in this section is to prove that
covolume polynomials are sectional log-concave.

The following characterization of two-variable covolume polynomials is a simple corollary 
of another result of June Huh (\cite[Theorem~21]{MR2904577}). Following~\cite{MR4172622},
we consider the {\em normalization\/} operator on the polynomial ring, defined on monomials 
by $N(t_0^{i_0}\cdots t_\ell^{i_\ell}):=t_0^{i_0}\cdots t_\ell^{i_\ell}/i_0!\cdots i_\ell!$.
By~\cite[Corollary~3.7]{MR4172622}, the normalization of a Lorentzian polynomial
is Lorentzian.

\begin{lemma}\label{lem:twovac}
A nonzero homogeneous bivariate polynomial
\[
P(u,v):=\sum_{i+j=d} a_{ij} u^i v^j\in \Rbb[u,v]
\]
is a covolume polynomial if and only if it is log-concave, that is, its coefficients 
$a_{d0},\dots, a_{0d}$ form a nonzero log-concave sequence of nonnegative real 
numbers with no internal zeros. 

Therefore, a homogeneous polynomial $P(u,v)\in \Rbb[u,v]$ is a covolume polynomial if
and only if its normalization is Lorentzian.
\end{lemma}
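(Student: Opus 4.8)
The plan is to derive Lemma~\ref{lem:twovac} as a combination of the two-variable classification with Huh's characterization of two-variable Lorentzian polynomials. The first statement (covolume $\iff$ log-concave) is where the geometric input lives, and the second statement (covolume $\iff$ normalization is Lorentzian) should then be essentially formal, once we know the corresponding characterization of Lorentzian bivariate polynomials.

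First I would prove the forward implication of the first statement: if $P=P_{[W]}$ for a curve (or more generally a subvariety) $W$ in a product of two projective spaces, its coefficients form a log-concave sequence with no internal zeros. Up to passing to a cone as in Remark~\ref{rem:event}, we may assume $W\subseteq \Pbb^n\times\Pbb^n$ with $n\gg 0$; the coefficients $a_{ij}$ (with $i+j=d$, $d=\codim W$) are the multidegrees of $W$, i.e. the intersection numbers $a_{ij}=\int_W h_0^{n-i}h_1^{n-j}$ after restricting appropriately to a subspace of the right dimension. This is precisely the setting to which Huh's result \cite[Theorem~21]{MR2904577} applies: the mixed multiplicities / multidegrees of an irreducible subvariety of a product of two projective spaces form a log-concave sequence with no internal zeros. (Equivalently, one invokes that the volume polynomial of $W$ with respect to the two nef classes $h_0,h_1$ is Lorentzian, and reads off log-concavity of the top-dimensional mixed intersection numbers.) Nonnegativity of coefficients is automatic, and the no-internal-zeros property is part of Huh's statement; this handles $cP_{[W]}$, and the general covolume case follows because log-concavity with no internal zeros is a closed condition on the relevant stratum — although here one must be slightly careful, since a limit of log-concave sequences with no internal zeros can a priori develop internal zeros, so I would instead argue that a limit of such normalized polynomials is a limit of Lorentzian polynomials, hence Lorentzian (the Lorentzian cone is closed, \cite[Theorem~2.25]{MR4172622}), and then transfer back.

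For the converse of the first statement, suppose $P(u,v)=\sum a_{ij}u^iv^j$ is a nonzero homogeneous log-concave polynomial with no internal zeros. By scaling and continuity it suffices to realize, as an honest multidegree polynomial of an irreducible surface-or-curve in a product of two projective spaces, a dense set of such $P$; alternatively, one shows directly that for integer log-concave sequences with no internal zeros there is an explicit irreducible $W\subseteq\Pbb^n\times\Pbb^n$ (for $n\gg0$) whose bidegrees realize the sequence. A convenient construction: given the sequence, build $W$ as a suitable complete intersection or as the image of $\Pbb^1$ (or a higher-dimensional variety) under a map to $\Pbb^n\times\Pbb^n$ whose two components have the prescribed degrees; irreducibility holds for general such data when $n\ge 2$. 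This is the step I expect to be the \emph{main obstacle}: producing irreducible varieties with prescribed log-concave bidegree sequences is not entirely trivial, and the cleanest route is probably to cite or adapt the construction already implicit in Huh's work (or in \cite{MR4172622}) showing that every log-concave sequence without internal zeros arises as a sequence of mixed intersection numbers, then lift it to a product of projective spaces. If a direct construction is awkward, an acceptable fallback is to invoke the known equivalence between log-concave bivariate polynomials and normalized Lorentzian bivariate polynomials (Huh, \cite{MR2904577}) together with the fact that normalized volume polynomials of curves on toric surfaces exhaust the Lorentzian cone in two variables.

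Finally, the second statement follows by combining the first with the two-variable case of \cite[Corollary~3.7]{MR4172622} (normalization preserves the Lorentzian property, and in two variables is invertible up to the same property) and Huh's description: a homogeneous $P(u,v)$ has Lorentzian normalization if and only if the coefficients of $P$ itself are nonnegative, log-concave, and without internal zeros — which is exactly the condition characterizing covolume polynomials established in the first part. Concretely, if $P$ is covolume then $P$ is log-concave with no internal zeros, hence $N(P)$ is (ultra-log-concave, i.e.) Lorentzian; conversely if $N(P)$ is Lorentzian then, denormalizing, the coefficients $a_{ij}$ of $P$ satisfy $a_{ij}^2\ge a_{i-1,j+1}a_{i+1,j-1}$ with no internal zeros, so $P$ is log-concave and therefore covolume by the first statement. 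Thus the two characterizations are equivalent, completing the proof; no step here is an obstacle once the first statement is in hand.
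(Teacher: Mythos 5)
Your proof follows the paper's argument essentially verbatim: both implications rest on Huh's result \cite[Theorem~21]{MR2904577} (log-concavity with no internal zeros for the multidegrees of an irreducible subvariety of a product of two projective spaces, together with the converse realization of any such sequence, up to a positive integer multiple, as a multidegree sequence), and the normalization statement is the bivariate equivalence between log-concavity of $P$ and ultra-log-concavity of $N(P)$, i.e.\ \cite[Example~2.26]{MR4172622}, exactly as in the paper. The only remark worth making is that the ``main obstacle'' you anticipate in the converse is not one: the realization statement is already part of \cite[Theorem~21]{MR2904577}, which is precisely the citation the paper uses, so no new construction is needed (and your proposed ad hoc constructions via complete intersections or images of $\Pbb^1$ would not realize general log-concave sequences).
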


\begin{proof}
Assume that $P(u,v)\in \Rbb[u,v]$ is a covolume polynomial. By continuity, it suffices to 
verify the statement when the coefficients of $P$ are rational. Therefore, we may assume 
that there exists a positive rational $c$ and an irreducible subvariety $W$ of codimension~$d$ 
in $\Pbb^n\times\Pbb^n$, with $n\gg 0$ (cf.~Remark~\ref{rem:event}), such that
\[
[W]=\sum_{i+j=d} ca_{ij} h^i k^j \cap [\Pbb^n\times \Pbb^n]\saf.
\]
Here $h$, resp.~$k$ denote the pull-backs of the hyperplane classes from the first, resp.~second
factor. Thus,
\[
[W]=\sum_{i+j=d} ca_{ij} [\Pbb^{n-i}\times \Pbb^{n-j}]
=\sum_{i+j=\dim W} ca_{n-i,n-j} [\Pbb^i \times \Pbb^j]
\]
is the class of an irreducible subvariety of $\Pbb^n\times \Pbb^n$.
By~\cite[Theorem~21]{MR2904577}, the coefficients $ca_{n-i,n-j}$ form a nonzero log-concave
sequence with no internal zeros, and it follows that the same holds for $a_{d0},\dots, a_{0d}$.

Conversely, assume that $a_{d0},\dots, a_{0d}$ form a nonzero log-concave sequence of 
nonnegative real numbers with no internal zeros. Such sequences are limits of 
log-concave sequences of nonnegative rational numbers with no internal zeros,
so we may assume $a_{d0},\dots, a_{0d}$ are rational.
Clearing denominators, there exists $c\in \Qbb_{>0}$ such
that the coefficients $ca_{ij}$ are integers; so $ca_{d0},\dots, ca_{0d}$ form a log-concave
sequence of nonnegative integers with no internal zeros. Again by~\cite[Theorem~21]{MR2904577}, 
a positive integer multiple of 
\[
\sum_{i+j=d} ca_{ij} [\Pbb^j \times \Pbb^i]
=\sum_{i+j=d} ca_{ij} h^i k^j \cap [\Pbb^d\times \Pbb^d]
\]
is the class of an irreducible subvariety of $\Pbb^d \times \Pbb^d$, and it follows that
$P(u,v)$ is a covolume polynomial.

The coefficients of the polynomial $\sum_{i+j=d} a_{ij} u^i v^j$ form a log-concave sequence
with no internal zeros if and only if the coefficients of the normalization
$\sum_{i+j=d} a_{ij} u^i v^j/i! j!$ form an {\em ultra\/} log-concave sequence with no internal
zeros, if and only if the normalization is Lorentzian, cf.~\cite[Example~2.26]{MR4172622}.
\end{proof}

Lemma~\ref{lem:twovac} raises the natural question of whether the normalization of a
covolume polynomial may be Lorentzian in general. This is not the case.

\begin{example}\label{ex:notlor}
Let $W$ be the image of the embedding
\[
\Pbb^1\times \Pbb^1\times \Pbb^1 \hookrightarrow \Pbb^7\times \Pbb^3\times \Pbb^1
\]
obtained from the Segre embedding $\Pbb^1\times \Pbb^1\times \Pbb^1 \hookrightarrow 
\Pbb^7$, the Segre embedding of the first two factors $\Pbb^1\times \Pbb^1 \hookrightarrow \Pbb^3$, 
and the identity $\Pbb^1 \to \Pbb^1$ on the third factor. Let 
\[
[W]=\sum_{i_0+i_1+i_2=8} a_{i_0 i_1 i_2} h_0^{i_0} h_1^{i_1} h_2^{i_2}\cap
[\Pbb^7\times \Pbb^3\times \Pbb^1]\saf.
\]
Denoting by $k_i$ the pull-back of the hyperplane classes from the $i$-th factor
of the product $\Pbb^1\times \Pbb^1\times\Pbb^1$, we have
\begin{align*}
a_{i_0 i_1 i_2} &= \int h_0^{7-i_0} h_1^{3-i_1} h_2^{1-i_2}\cap [W] \\
&=\text{coeff.~of $k_0 k_1 k_2$ in } (k_0+k_1+k_2)^{7-i_0} (k_0+k_1)^{3-i_1} k_2^{1-i_2}\saf.
\end{align*}
It follows that
\[
P_{[W]}(t_0,t_1,t_2)=
2t_0^7t_1 + 2t_0^6t_1^2 + 2t_0^6t_1t_2 + 2t_0^5t_1^3 + 4t_0^5t_1^2t_2 + 
6t_0^4t_1^3t_2
\]
is a covolume polynomial. The normalization of $P_{[W]}$ is
\[
N(P_{[W]})=\frac 1{2520}t_0^7t_1 + \frac 1{720}t_0^6t_1^2 + \frac 1{360}t_0^6t_1t_2 + 
\frac 1{360}t_0^5t_1^3 + \frac 1{60}t_0^5t_1^2t_2 + \frac 1{24}t_0^4t_1^3t_2\saf,
\]
and this polynomial is {\em not\/} Lorentzian. Indeed,
\[
\frac{\partial^5}{\partial t_0^5}\frac{\partial}{\partial t_1} N(P_{[W]})
=t_0^2 + 2t_0 t_1 + 2 t_0 t_2 + t_1^2 + 4 t_1 t_2\saf,
\]
with Hessian
\[
\begin{pmatrix}
2 & 2 & 2 \\
2 & 2 & 4 \\
2 & 4 & 0
\end{pmatrix}\saf.
\]
This matrix has {\em two\/} positive eigenvalues, contrary to the requirement for Lorentzianity
(cf.~\cite[\S2.4]{MR4172622}).
\qede\end{example}

On the other hand, the normalization of a simple transformation of a covolume polynomial 
{\em is\/} Lorentzian. 

\begin{prop}\label{prop:covtolor}
Let $P(t_0,\dots,t_\ell)$ be a covolume polynomial. Let $n_j$, $j=0,\dots, \ell$, be
any integers such that
\[
Q(u_0,\dots, u_\ell) = u_0^{n_0}\cdots u_\ell^{n_\ell} P\left(\frac 1{u_0},\dots, \frac 1{u_\ell}
\right)
\]
is a polynomial. Then $N(Q)$ is Lorentzian. 
\end{prop}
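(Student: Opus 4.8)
The plan is to identify $N(Q)$, up to a positive scalar and up to applying coordinate partial derivatives, with a \emph{volume polynomial} of an irreducible projective variety, and then invoke the result of Br\"and\'en and Huh that volume polynomials of projective varieties are Lorentzian (\cite[Theorem~4.6]{MR4172622}). The basic computation is this. Let $W$ be an irreducible subvariety of codimension $d$ in $\Pbb:=\Pbb^{m_0}\times\cdots\times\Pbb^{m_\ell}$, with $[W]=\sum_I a_I h_0^{i_0}\cdots h_\ell^{i_\ell}\cap[\Pbb]$ (so $P_{[W]}(\ut)=\sum_I a_I\ut^I$), and set $R:=u_0^{m_0}\cdots u_\ell^{m_\ell}P_{[W]}(1/u_0,\dots,1/u_\ell)=\sum_I a_I\,u_0^{m_0-i_0}\cdots u_\ell^{m_\ell-i_\ell}$. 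Expanding $\operatorname{vol}_W(u_0,\dots,u_\ell):=\int_W(u_0 h_0+\cdots+u_\ell h_\ell)^{\dim W}$ (with $h_j$ restricted to $W$) and using that $\int_\Pbb h_0^{f_0}\cdots h_\ell^{f_\ell}\cap[\Pbb]$ is $1$ when $f_j=m_j$ for all $j$ and $0$ otherwise, one finds $(\dim W)!\,N(R)=\operatorname{vol}_W$. Since each $h_j|_W$ is globally generated, hence nef, and $W$ is irreducible and projective, $\operatorname{vol}_W$ is Lorentzian by \cite[Theorem~4.6]{MR4172622} (and nonzero, as $\operatorname{vol}_W(1,\dots,1)=\deg W>0$ because $\sum_j h_j|_W$ is the restriction of a very ample class); hence so is the positive multiple $N(R)=\tfrac1{(\dim W)!}\operatorname{vol}_W$. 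In particular, if $P=cP_{[W]}$ with $c>0$, then $\deg_{t_j}P=\deg_{t_j}P_{[W]}\le n_j$ for all $j$, so by Remark~\ref{rem:event} we may take $W\subseteq\Pbb^{n_0}\times\cdots\times\Pbb^{n_\ell}$; then $Q=cR$ and $N(Q)=cN(R)$ is Lorentzian.

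For a general covolume polynomial $P$ (we may assume $P\ne 0$, else there is nothing to prove), write $P=\lim_k c^{(k)}P_{[W_k]}$. Covolume polynomials are homogeneous, being limits of positive multiples of the homogeneous polynomials $P_{[W_k]}$; let $d:=\deg P$, so that $\deg P_{[W_k]}=d$ and hence $\deg_{t_j}P_{[W_k]}\le d$ for $k\gg 0$. Put $m_j:=\max(n_j,d)$. Then $\deg_{t_j}P_{[W_k]}\le m_j$, so by Remark~\ref{rem:event} we may assume $W_k\subseteq\Pbb^{m_0}\times\cdots\times\Pbb^{m_\ell}$ for all $k$, and the computation above shows that $N(R_k)$ is Lorentzian, where $R_k:=u_0^{m_0}\cdots u_\ell^{m_\ell}(c^{(k)}P_{[W_k]})(1/u_0,\dots,1/u_\ell)$. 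Since the substitution merely permutes and rescales coefficients, $R_k\to R:=u_0^{m_0}\cdots u_\ell^{m_\ell}P(1/u_0,\dots,1/u_\ell)$ coefficientwise, with all terms homogeneous of the same degree in $u_0,\dots,u_\ell$; as the set of Lorentzian polynomials of a fixed degree in a fixed number of variables is closed (\cite{MR4172622}), $N(R)$ is Lorentzian. Finally, since $\deg_{t_j}P\le n_j\le m_j$ we have $R=u_0^{m_0-n_0}\cdots u_\ell^{m_\ell-n_\ell}\,Q$, and the identity $N(S)=\partial_{u_j}N(u_jS)$ — immediate on monomials — yields, by induction on $\sum_j(m_j-n_j)$, that $N(Q)=\partial_{u_0}^{m_0-n_0}\cdots\partial_{u_\ell}^{m_\ell-n_\ell}N(R)$. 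Since the set of Lorentzian polynomials is closed under the operators $\partial_{u_j}$ (\cite{MR4172622}), $N(Q)$ is Lorentzian.

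The step I expect to require the most care is the passage to the limit. The $t_j$-degrees of the approximants $P_{[W_k]}$ are a priori only bounded by $\deg P$, not by the prescribed $n_j$, so one cannot carry out the construction directly inside $\Pbb^{n_0}\times\cdots\times\Pbb^{n_\ell}$; enlarging to $m_j=\max(n_j,\deg P)$, where all the $W_k$ do fit (this bound being uniform in $k$ precisely because $\deg_{t_j}P_{[W_k]}\le\deg P_{[W_k]}=\deg P$), and then descending back to the $n_j$ via repeated use of $N(S)=\partial_{u_j}N(u_jS)$ together with the stability of Lorentzian polynomials under differentiation, is the device that resolves this. The remaining ingredients — the volume-polynomial identity and the closedness and differentiation properties of Lorentzian polynomials — are either routine or quoted from \cite{MR4172622}.
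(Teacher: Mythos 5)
Your proposal follows essentially the same route as the paper: the identity $(\dim W)!\,N\bigl(u_0^{m_0}\cdots u_\ell^{m_\ell}P_{[W]}(1/u_0,\dots,1/u_\ell)\bigr)=\int_W(u_0h_0+\cdots+u_\ell h_\ell)^{\dim W}$, the Lorentzian property of volume polynomials from \cite[Theorem~4.6]{MR4172622}, closedness of Lorentzian polynomials under limits, and the relation $N(S)=\partial_{u_j}N(u_jS)$ together with stability of Lorentzian polynomials under derivatives; these are exactly the paper's ingredients. The one step that is not justified as written is the appeal to Remark~\ref{rem:event} to place all the approximants $W_k$ (and, in your warm-up paragraph, $W$) inside the \emph{fixed} product $\Pbb^{m_0}\times\cdots\times\Pbb^{m_\ell}$ with $m_j=\max(n_j,d)$: the remark only lets you \emph{enlarge} the ambient product by coning, not shrink it, and the bound $\deg_{t_j}P_{[W_k]}\le d$ does not bound the ambient dimensions in which the $W_k$ are given, which may exceed $m_j$. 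The gap is easily repaired with tools you already deploy: for each $k$, cone $W_k$ up to an ambient product dominating both its own and $(m_0,\dots,m_\ell)$, apply your volume-polynomial identity there, and then use $N(S)=\partial^{a}N(u^{a}S)$ to descend to the fixed exponent vector $m$ \emph{before} taking the limit. This is in effect how the paper arranges matters: it performs the continuity step at the exponent vector $(d,\dots,d)$ (where no assumption on the approximants' ambient spaces is needed) and only ever enlarges the ambient space of a single $W$, descending afterwards by differentiation on the polynomial side.
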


In fact, we will prove that, possibly up to inessential factors, 
$N(Q)$ is a limit of {\em volume 
polynomials\/} in the sense of~\cite[\S4.2]{MR4172622}, that is, polynomials of the form
\[
\int (u_0 h_0+\cdots +u_\ell h_\ell)^{\dim W}\cap [W]
\]
where $W$ is an irreducible variety and $h_0,\dots, h_\ell$ is a collection of nef divisors 
on $W$.

\begin{remark}
Proposition~\ref{prop:covtolor} shows that covolume polynomials are {\em dually
Lorentzian.\/} This notion is introduced and thoroughly studied in~\cite{dualLor}.
\qede\end{remark}

\begin{proof}
For all $j$, $N(Q)$ is Lorentzian if $N(u_j Q)$ is Lorentzian; indeed, the former is the 
derivative of the latter with respect to $u_j$, and derivatives of Lorentzian polynomials
are Lorentzian by definition. Therefore, it suffices to show that the normalization of
\[
Q(u_0,\dots, u_\ell) = u_0^d\cdots u_\ell^d P\left(\frac 1{u_0},\dots, \frac 1{u_\ell}
\right)
\]
is Lorentzian, where $d=\deg P$. By continuity and up to a constant multiple we may then
assume that $P=P_{[W]}$ is the covolume polynomial
associated with the class of an irreducible subvariety $W$ of a product of projective spaces;
and by the same argument used above, it suffices to show that there exist $n_0\ge d,\dots, 
n_\ell\ge d$ such that 
\[
Q_{[W]}(u_0,\dots, u_\ell) = u_0^{n_0}\cdots u_\ell^{n_\ell} P_{[W]}\left(\frac 1{u_0},\dots, \frac 1{u_\ell}
\right)
\]
has Lorentzian normalization. 
By Remark~\ref{rem:event}, we may choose 
$W\subseteq \Pbb^{n_0}\times \cdots\times \Pbb^{n_\ell}$
with $n_j\gg 0$. Then
\[
P_{[W]}=\sum_{i_0+\cdots+i_\ell=d} a_{i_0\dots i_\ell} t_0^{i_0}\cdots t_\ell^{i_\ell}
\]
where $0\le i_j\le n_j$ for all $j$ and
\[
[W]=\sum_{i_0+\cdots+i_\ell=d} a_{i_0\dots i_\ell} h_0^{i_0}\cdots h_\ell^{i_\ell} 
\cap[\Pbb^{n_0} \times \cdots \times \Pbb^{n_\ell}]
\]
with the usual notation. We have
\[
a_{i_0\dots i_\ell} = \int h_0^{n_0-i_0}\cdots h_\ell^{n_\ell-i_\ell}\cap [W]\saf.
\]
Therefore
\begin{align*}
N(Q_{[W]}(u_0,\dots, u_\ell)) &= N\left(u_0^{n_0}\cdots u_\ell^{n_\ell} 
P_{[W]}\left(\frac 1{u_0},\dots, \frac 1{u_\ell}
\right)\right)\\
&= \sum_{i_0+\cdots+i_\ell=d} a_{i_0\dots i_\ell} \frac{u_0^{n_0-i_0}}{(n_0-i_0)!}
\cdots \frac{u_\ell^{n_\ell-i_\ell}}{(n_\ell -i_\ell)!} \\
&= \sum_{j_0+\cdots+j_\ell=\dim W} a_{n_0-j_0\dots n_\ell-j_\ell} \frac{u_0^{j_0}}{j_0!}
\cdots \frac{u_\ell^{j_\ell}}{j_\ell!} \\
&= \sum_{j_0+\cdots+j_\ell=\dim W} 
\left(\int h_0^{j_0}\cdots h_\ell^{j_\ell}\cap [W]\right) \frac{u_0^{j_0}}{j_0!}
\cdots \frac{u_\ell^{j_\ell}}{j_\ell!} \\
&=\frac 1{(\dim W)!} \int \sum_{j_0+\cdots+j_\ell=\dim W}\binom{\dim W}{j_0\dots j_\ell}
(u_0 h_0)^{j_0}\cdots (u_\ell h_\ell)^{j_\ell}\cap [W] \\
&=\frac 1{(\dim W)!} \int (u_0 h_0+\cdots +u_\ell h_\ell)^{\dim W}\cap [W]\saf.
\end{align*}
This shows that, up to a scalar factor, $N(Q_{[W]})$ is a volume polynomial. It follows that 
$N(Q_{[W]})$ is Lorentzian by~\cite[Theorem~4.6]{MR4172622}, and this concludes the
argument.
\end{proof}

\begin{remark}
A sharp version of this argument is given for the case of Schubert polynomials (cf.~Example~\ref{ex:Schub}) in~\cite[Theorem~6]{MR4419063}.
\qede\end{remark}

\begin{example}\label{ex:notcov}
As we verified in Example~\ref{ex:notlor}, the polynomial
\[
P(t_0,t_1,t_2)=
2t_0^7t_1 + 2t_0^6t_1^2 + 2t_0^6t_1t_2 + 2t_0^5t_1^3 + 4t_0^5t_1^2t_2 + 
6t_0^4t_1^3t_2
\]
is a covolume polynomial. For $n_0=7, n_1=3, n_2=1$, we have
\begin{align*}
Q(u_0,u_1,u_2) &=u_0^7 u_1^3 u_2 P\left(\frac 1{u_0},\frac 1{u_1},\frac 1{u_2}\right) \\
&= 6u_0^3 + 4u_0^2 u_1 + 2u_0^2 u_2 + 2 u_0 u_1^2 + 2 u_0 u_1 u_2 + 2 u_1^2 u_2\saf.
\end{align*}
The normalization of this polynomial, 
\[
N(Q)=u_0^3 + 2u_0^2 u_1 + u_0^2 u_2 + u_0 u_1^2 + 2 u_0 u_1 u_2 + u_1^2 u_2\saf,
\]
is Lorentzian as prescribed by Proposition~\ref{prop:covtolor}. 

This also shows that a polynomial whose normalization is Lorentzian is not necessarily
a covolume polynomial. Indeed, if $Q$ were a covolume polynomial, then by
Proposition~\ref{prop:covtolor} it would follow that $N(P)$ is Lorentzian, and we have
verified that this is not the case in Example~\ref{ex:notlor}.

Proposition~\ref{prop:covtolor} also implies that {\em Lorentzian\/} polynomials are not 
necessarily covolume polynomials. Indeed, the polynomial
\[
A(u_0,u_1,u_2,u_3)=u_0^2 u_1 + u_0^2 u_2 + u_0^2 u_3 + u_0 u_1 u_2 + u_0 u_1 u_3 
+ 4 u_0 u_2 u_3 + u_1 u_2 u_3
\]
is Lorentzian, but the normalization of
\[
t_0^2 t_1 t_2 t_3\cdot A\left(\frac 1{t_0}, \frac 1{t_1} , \frac 1{t_2}, \frac 1{t_3}\right) 
=t_2 t_3+t_1 t_3+t_1 t_2+t_0 t_3+t_0t_2+4 t_0 t_1+t_0^2
\]
is not Lorentzian: its Hessian
\[
\begin{pmatrix}
2 & 4 & 1 & 1\\
4 & 0 & 1 & 1\\
1 & 1 & 0 & 1\\
1 & 1 & 1 & 0
\end{pmatrix}
\]
has two positive eigenvalues.
\qede\end{example}

By Proposition~\ref{prop:covtolor}, every covolume polynomial may be expressed in terms 
of a Lorentzian polynomial: if $P(t_0,\dots, t_\ell)$ is a covolume polynomial, 
then there exist nonnegative integers $n_0,\dots, n_\ell$ such that
\begin{equation}\label{eq:covlor}
P(t_0,\dots, t_\ell) = t_0^{n_0}\cdots t_\ell^{n_\ell} Q\left(\frac 1{t_0},\dots, \frac 1{t_\ell}
\right)
\end{equation}
where $Q$ is a polynomial whose normalization is Lorentzian. The result that follows is a 
consequence of this observation. 

Recall that a subset $S\subseteq\Nbb^{\ell+1}$ is `M-convex'
if for all $i$ and all $\alpha,\beta\in S$ such that $\alpha_i>\beta_i$, there exists $j$ such that
$\alpha_j<\beta_j$ and $\alpha-e_i+e_j\in S$, $\beta-e_j+e_i\in S$, where $e_i$ is that
$i$-th standard unit vector. (See~\cite{MR1997998}, \cite[\S2]{MR4172622}.)
This (symmetric) {\em exchange property\/} generalizes the exchange property defining 
matroids; M-convex sets are {\em generalized polymatroids.\/}

\begin{corol}\label{cor:Mconv}
The support of a covolume polynomial is an $M$-convex set.
\end{corol}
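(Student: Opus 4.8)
The plan is to reduce the statement to the corresponding fact for Lorentzian polynomials by means of the transformation recorded in~\eqref{eq:covlor}. Let $P(t_0,\dots,t_\ell)$ be a covolume polynomial. By Proposition~\ref{prop:covtolor} there exist nonnegative integers $n_0,\dots,n_\ell$ such that
\[
P(t_0,\dots,t_\ell) = t_0^{n_0}\cdots t_\ell^{n_\ell}\, Q\left(\frac 1{t_0},\dots,\frac 1{t_\ell}\right)\saf,
\]
where $Q$ is a polynomial whose normalization $N(Q)$ is Lorentzian. The first step is to observe that the support of $P$ is the image of the support of $Q$ under the affine-linear involution $\iota:(j_0,\dots,j_\ell)\mapsto (n_0-j_0,\dots,n_\ell-j_\ell)$ on $\Nbb^{\ell+1}$: a monomial $u_0^{j_0}\cdots u_\ell^{j_\ell}$ appears in $Q$ with nonzero coefficient exactly when $t_0^{n_0-j_0}\cdots t_\ell^{n_\ell-j_\ell}$ appears in $P$ with that same coefficient. (Here one uses that $n_j\gg 0$, so the exponents $n_j-j_j$ are genuinely nonnegative; cf.\ the choice in Proposition~\ref{prop:covtolor}.) Thus $\operatorname{supp}(P)=\iota(\operatorname{supp}(Q))$, and since normalization does not alter the support, $\operatorname{supp}(P)=\iota(\operatorname{supp}(N(Q)))$.

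The second step invokes the theory of Lorentzian polynomials: by~\cite[Theorem~2.25]{MR4172622} (the characterization of Lorentzian polynomials in terms of their supports and their bivariate sections), the support of a Lorentzian polynomial is M-convex. Hence $\operatorname{supp}(N(Q))\subseteq\Nbb^{\ell+1}$ is an M-convex set. It then remains only to check that M-convexity is preserved by the map $\iota$. This is the third and essentially only computational step: given $\alpha,\beta\in \iota(\operatorname{supp}(N(Q)))$ with $\alpha_i>\beta_i$, write $\alpha=\iota(\alpha')$, $\beta=\iota(\beta')$ with $\alpha',\beta'\in\operatorname{supp}(N(Q))$; then $\alpha'_i<\beta'_i$, so the (symmetric) exchange property for $\operatorname{supp}(N(Q))$ applied with the roles of $\alpha'$ and $\beta'$ swapped produces an index $j$ with $\beta'_j<\alpha'_j$ (equivalently $\alpha_j<\beta_j$) and $\beta'-e_i+e_j\in\operatorname{supp}(N(Q))$, $\alpha'-e_j+e_i\in\operatorname{supp}(N(Q))$. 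Since $\iota(\gamma-e_i+e_j)=\iota(\gamma)+e_i-e_j$, applying $\iota$ gives $\beta+e_i-e_j\in\operatorname{supp}(P)$ and $\alpha+e_j-e_i\in\operatorname{supp}(P)$, which is precisely the exchange property required of $\operatorname{supp}(P)$ at $\alpha,\beta,i$. Therefore $\operatorname{supp}(P)$ is M-convex, as claimed.

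I do not expect a serious obstacle here: the argument is a one-line translation once Proposition~\ref{prop:covtolor} is in hand, the only care needed being (a) that the exponent shifts stay inside $\Nbb^{\ell+1}$, which is guaranteed by taking the $n_j$ large, and (b) getting the direction of the exchange right under the order-reversing involution $\iota$ — the roles of $\alpha$ and $\beta$ (and the signs of the $e_i$, $e_j$ shifts) are swapped, but the defining condition of an M-convex set is symmetric in $\alpha$ and $\beta$, so this causes no trouble. One could alternatively phrase the whole thing abstractly: the image of a generalized polymatroid under an invertible $\{0,\pm1\}$-unimodular affine substitution of the appropriate shape is again a generalized polymatroid; but the direct verification above is shorter.
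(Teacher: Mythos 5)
Your argument is correct and is essentially the paper's own proof: both pass to the polynomial $Q$ of Proposition~\ref{prop:covtolor}, use that the support of a Lorentzian polynomial is M-convex (the paper cites \cite[Definition~2.6]{MR4172622} rather than a theorem, since M-convexity of the support is built into the definition), and transfer M-convexity back through the reflection $(j_0,\dots,j_\ell)\mapsto(n_0-j_0,\dots,n_\ell-j_\ell)$. The only difference is that you spell out the exchange-property check that the paper dismisses as ``straightforward,'' and you do it correctly.
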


\begin{proof}
Let 
\[
P=\sum_{i_0+\cdots+i_\ell=d} a_{i_0\dots i_\ell} t_0^{i_0}\cdots t_\ell^{i_\ell}
\]
be a covolume polynomial. The corresponding polynomial $Q$ as in 
Proposition~\ref{prop:covtolor} or~\eqref{eq:covlor} is given by
\[
Q=\sum_{j_0+\cdots+j_\ell=\sum_k n_k-d} a_{n_0-j_0\dots n_\ell-j_\ell} 
u_0^{j_0}\cdots u_\ell^{j_\ell}\saf.
\]
As the normalization of this polynomial is Lorentzian, its support, that is, the set
\[
\{(j_0,\dots, j_\ell)\,|\, a_{n_0-j_0\dots n_\ell-j_\ell}\ne 0\}
\]
is M-convex (\cite[Definition~2.6]{MR4172622}). It is then straightforward to check that 
the support of $P$, that is,
\[
\{(i_0,\dots, i_\ell)\,|\, a_{i_0\dots i_\ell}\ne 0\}
\]
also satisfies the symmetric exchange property and is therefore M-convex.
\end{proof}

Lemma~\ref{lem:twovac} and Corollary~\ref{cor:Mconv} indicate that while covolume 
polynomials (or even their normalizations) are not necessarily Lorentzian, they share 
certain key properties with Lorentzian polynomials. The result that follows is possibly
the most useful such analogue.
Recall from~\cite[Theorem~2.10]{MR4172622} that if $f(\uw)$ is a Lorentzian polynomial,
then so is $f(A\uv)$ for any matrix $A$ with nonnegative entries. The set of covolume 
polynomials is similarly preserved by nonnegative coordinate changes.

\begin{theorem}\label{thm:nonnegcc}
Let $P(\ut)\in \Rbb[t_0,\dots, t_\ell]$ be a covolume polynomial, and let $A$ be an 
$(\ell+1)\times (m+1)$ matrix with nonnegative real entries. Then 
$P(A\uu)\in \Rbb[u_0,\dots, u_m]$ is a covolume polynomial.
\end{theorem}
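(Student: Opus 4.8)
The plan is to reduce to the geometric situation and realize the nonnegative change of variables as a consequence of an actual morphism of products of projective spaces, followed by a push-forward computation. By continuity (covolume polynomials are closed under limits by definition) and by scaling, it suffices to treat the case $P = P_{[W]}$ for an irreducible subvariety $W$ of a product $\Pbb^{n_0}\times\cdots\times\Pbb^{n_\ell}$, with all $n_j \gg 0$ by Remark~\ref{rem:event}. Again by continuity and clearing denominators we may assume $A$ has nonnegative \emph{integer} entries; write $A = (a_{jk})$ with $0\le j\le \ell$, $0\le k\le m$. The substitution $t_j \mapsto \sum_k a_{jk} u_k$ then corresponds geometrically to the effect on hyperplane classes of a map: consider the morphism $\varphi : (\Pbb^N)^{m+1} \dashrightarrow (\Pbb^{n_0})\times\cdots\times(\Pbb^{n_\ell})$, $N\gg 0$, whose $j$-th component $(\Pbb^N)^{m+1}\dashrightarrow \Pbb^{n_j}$ is the composition of the multi-Veronese-type map sending $(\uv_0,\dots,\uv_m)$ to the multihomogeneous forms of multidegree $(a_{j0},\dots,a_{jm})$, followed by a general linear projection to $\Pbb^{n_j}$. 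For this map, $\varphi^* h_j = \sum_k a_{jk} h'_k$ where $h'_k$ is the hyperplane class of the $k$-th factor of $(\Pbb^N)^{m+1}$.

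Next I would set up the incidence/pullback variety. Let $W' := \overline{\varphi^{-1}(W)}$ (or, to avoid issues with the rational map, pass to a resolution $X\to (\Pbb^N)^{m+1}$ on which $\varphi$ is a morphism and take the closure of the preimage there, then push forward to $(\Pbb^N)^{m+1}$). For general choices of the linear projections and for $N\gg 0$ relative to $\dim W$, the expected-dimension and irreducibility of $W'$ should follow from a Bertini-type argument: $W'$ is cut out by a general complete intersection pulled back along the multi-Veronese, and generic smoothness/irreducibility of preimages of irreducible subvarieties under dominant maps with irreducible general fiber applies since the multi-Veronese is an embedding and the linear projections are general. Then $[W'] = \varphi^*[W]$ up to the excess contributions, but with a general projection and $N$ large these excess terms vanish in the relevant range of degrees, so $[W'] = P_{[W]}(\varphi^* h_0,\dots,\varphi^* h_\ell)\cap[(\Pbb^N)^{m+1}] = P_{[W]}(A\uh')\cap[(\Pbb^N)^{m+1}]$. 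Truncating to the correct degree in each variable (automatic since $N\gg 0$ bounds nothing and the polynomial has bounded degree), this identifies $P_{W'} $ with $P(A\uu)$, exhibiting the latter as a covolume polynomial.

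An alternative, and perhaps cleaner, route avoids Bertini entirely by invoking Proposition~\ref{prop:covtolor} and Theorem~2.10 of~\cite{MR4172622}: write $P(\ut) = t_0^{n_0}\cdots t_\ell^{n_\ell} Q(1/t_0,\dots,1/t_\ell)$ with $N(Q)$ Lorentzian. Substituting $t_j = \sum_k a_{jk} u_k$ does not directly interact well with the reciprocal operation, so this would require first homogenizing, applying the nonnegative substitution on the Lorentzian side (where Theorem~2.10 of~\cite{MR4172622} guarantees the result is Lorentzian), and then arguing the inverse-monomial transform of the result is again covolume. The difficulty here is that the class of polynomials of the form ``$t^{\uN} Q(1/\ut)$ with $N(Q)$ Lorentzian'' is, by Example~\ref{ex:notcov}, strictly larger than the covolume polynomials, so this reduction alone is insufficient and one genuinely needs the geometric construction to stay inside the covolume class.

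\textbf{Main obstacle.} The crux is establishing that $W'$ can be chosen \emph{irreducible} of the \emph{expected} dimension with $[W'] = P(A\uh')\cap[(\Pbb^N)^{m+1}]$ \emph{on the nose} (no excess intersection corrections in the relevant degrees). Handling the case where some columns of $A$ are zero, or where $A$ has rank issues, requires care: a zero column means $P(A\uu)$ does not involve $u_k$, which is harmless (take a cone in that factor), but a more subtle point is ensuring the multi-Veronese followed by general linear projection really does pull back $h_j$ to exactly $\sum_k a_{jk}h'_k$ and that $W'$ meets the image of the (rational) map transversally enough. The standard fix — take $N$ enormous compared to $\dim W$ and all $a_{jk}$, use generic projections, and invoke Kleiman-Bertini over the algebraically closed field $k$ — should work, but writing it out carefully for products of projective spaces (rather than a single $\Pbb^N$) is where the real bookkeeping lies.
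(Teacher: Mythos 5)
Your overall strategy (reduce by continuity and scaling to $P=P_{[W]}$ with $A$ a nonnegative integer matrix, then realize the substitution $t_j\mapsto\sum_k a_{jk}u_k$ as the pullback along a Veronese-type map and identify $P(A\uu)$ with the class polynomial of a preimage of $W$) is the same skeleton as the paper's argument. But the step you yourself flag as the crux — irreducibility of the preimage, with the correct class — is exactly where the proposal has a genuine gap, and the tools you invoke there do not suffice. Kleiman--Bertini type theorems give properness of dimension and generic transversality for intersections with general translates or general linear sections; they do not give irreducibility (connectedness) of the intersection of a fixed irreducible $W$ with the fixed image of a Segre--Veronese map. Since the map you build is an embedding on each factor, $\varphi^{-1}(W)$ is literally an intersection of two subvarieties of the ambient product, and ``the preimage of an irreducible variety under a map with irreducible fibers'' does not apply. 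The missing ingredient in the paper is Debarre's connectedness theorem for products of projective spaces \cite[Th\'eor\`eme~2.2]{MR1420927}: one replaces $W$ by a general translate $^\gamma W$, $\gamma\in\PGL(M+1)^{\ell+1}$ (this is what makes the intersection proper and of class $\varphi^*[W]$, and is legitimate since translation does not change $[W]$), and then irreducibility of $\varphi^{-1}({}^\gamma W)$ is Claim~\ref{cla:deb}, whose proof requires verifying a nontrivial numerical hypothesis on multidegrees. That verification is where the quantitative choices enter: the Segre--Veronese degree parameter is chosen larger than $\deg P=\codim W$, and one first reduces to a matrix $A$ with strictly \emph{positive} entries so that all multidegree coefficients of the image of $\varphi$ are nonzero. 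None of this is a ``Bertini-type argument,'' and without it the proof does not close; irreducibility of general-translate intersections is Fulton--Hansen/Debarre territory, not Bertini.

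A secondary, fixable defect: you keep the ambient space of $W$ fixed and compose the multi-Veronese with general linear projections to land in $\Pbb^{n_j}$, which introduces base loci and the ``excess contributions'' you then wave away. The paper avoids this entirely via Remark~\ref{rem:event}: enlarge the ambient product (replace $W$ by a cone, all $n_j=M\gg0$) so that the Segre--Veronese maps are honest embeddings into each $\Pbb^M$ and $\varphi^*(h_i)=a_{i0}k_0+\cdots+a_{im}k_m$ holds exactly, with no projections and no excess intersection to control. Your final paragraph correctly diagnoses why the purely Lorentzian route through Proposition~\ref{prop:covtolor} and \cite[Theorem~2.10]{MR4172622} cannot replace the geometric argument, so the burden really does fall on the irreducibility step — and that step needs Debarre's theorem, not Bertini or Kleiman transversality.
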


For example, the `dilation' (replacing a variable by a nonnegative constant multiple of the same 
variable) and `diagonalization' (setting two variables equal to each other) operators preserve 
the property of being a covolume polynomial. The following consequence is the analogue 
of~\cite[Corollary~2.32]{MR4172622}.

\begin{corol}
The product of two covolume polynomials is a covolume polynomial.
\end{corol}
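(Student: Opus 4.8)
The plan is to deduce this immediately from Theorem~\ref{thm:nonnegcc}, exactly as~\cite[Corollary~2.32]{MR4172622} deduces the multiplicativity of Lorentzian polynomials from their stability under nonnegative linear changes of coordinates. The key observation is that the product of two polynomials in disjoint sets of variables is obtained from a single polynomial by a nonnegative (in fact $0/1$) substitution.

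Here are the steps. First, let $P(t_0,\dots,t_\ell)$ and $R(t_0,\dots,t_\ell)$ be covolume polynomials; after renaming variables we may regard $P=P(s_0,\dots,s_\ell)$ and $R=R(s_{\ell+1},\dots,s_{2\ell+1})$ as polynomials in the $2\ell+2$ variables $s_0,\dots,s_{2\ell+1}$, each using only its own block. Next I would argue that the product of two covolume polynomials in \emph{disjoint} variable sets is a covolume polynomial: if $P=\lim c_k P_{[V_k]}$ with $V_k$ an irreducible subvariety of a product $\Pbb^{\uN}$ of projective spaces, and $R=\lim d_k P_{[W_k]}$ with $W_k$ irreducible in another product $\Pbb^{\uN'}$, then $V_k\times W_k$ is an irreducible subvariety of $\Pbb^{\uN}\times\Pbb^{\uN'}$, and by the product formula for Chow rings $[V_k\times W_k]=P_{[V_k]}(h_0,\dots,h_\ell)\cdot P_{[W_k]}(h_{\ell+1},\dots,h_{2\ell+1})\cap[\Pbb^{\uN}\times\Pbb^{\uN'}]$, so $P_{[V_k\times W_k]}=P_{[V_k]}\cdot P_{[W_k]}$; passing to the limit shows $P\cdot R$ (as a polynomial in the $2\ell+2$ disjoint variables) is covolume. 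Finally, the ordinary product $P(t_0,\dots,t_\ell)R(t_0,\dots,t_\ell)$ is obtained from this disjoint-variable product by the substitution $s_i\mapsto t_i$, $s_{\ell+1+i}\mapsto t_i$ for $i=0,\dots,\ell$, which is a nonnegative linear change of coordinates; by Theorem~\ref{thm:nonnegcc} the result is again a covolume polynomial. One should also note the degenerate cases: if either factor is $0$ the product is $0$, which is covolume by the first example in Section~\ref{partI}, so we may assume both are nonzero and the above applies after normalizing so that the limits are of positive multiples of genuine subvarieties.

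The routine point to verify carefully is the Chow-ring product formula for a product of two products of projective spaces, i.e.\ that $A(\Pbb^{\uN}\times\Pbb^{\uN'})\cong A(\Pbb^{\uN})\otimes A(\Pbb^{\uN'})$ compatibly with the pulled-back hyperplane classes, together with the elementary fact that a product of irreducible varieties over an algebraically closed field is irreducible; neither is hard. The only genuine content is Theorem~\ref{thm:nonnegcc}, which has already been established, so I expect no real obstacle: the main step is simply recognizing multiplication in a common variable set as a nonnegative substitution applied to the easy disjoint-variable product. If one wishes to avoid even invoking Theorem~\ref{thm:nonnegcc}, one could instead use the Segre product construction directly—embedding $V_k\times W_k$ diagonally so that the hyperplane classes of the two blocks become identified—but routing through Theorem~\ref{thm:nonnegcc} is cleaner and parallels the treatment of Lorentzian polynomials in~\cite{MR4172622}.
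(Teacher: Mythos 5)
Your proposal is correct and follows essentially the same route as the paper: reduce by continuity and scaling to the case of classes of irreducible subvarieties $W'$, $W''$, observe that $W'\times W''$ is irreducible in the product of the two ambient products of projective spaces so that $P_{[W']}(\ut)P_{[W'']}(\uu)=P_{[W'\times W'']}(\ut,\uu)$ is covolume, and then identify the two blocks of variables via the $0/1$ (diagonalization) substitution allowed by Theorem~\ref{thm:nonnegcc}. The paper's proof is exactly this argument, stated more briefly.
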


\begin{proof}
By continuity and multiplication by scalar multiples, we are reduced to showing that 
$P_{[W']} (\ut) P_{[W'']}(\ut)$ is a covolume polynomial, where $W'$ and $W''$
are irreducible subvarieties of $\Pbb:=\Pbb^{n_0}\times \cdots\times\Pbb^{n_\ell}$.
For this, note that $W'\times W''$ is an irreducible subvariety of $\Pbb\times\Pbb$;
therefore, $P_{[W']} (\ut) P_{[W'']}(\uu)=P_{[W'\times W'']}(\ut,\uu)$ is a covolume
polynomial, and by diagonalization (that is, by Theorem~\ref{thm:nonnegcc}),
so is $P_{[W']} (\ut) P_{[W'']}(\ut)$.
\end{proof}

\begin{proof}[Proof of Theorem~\ref{thm:nonnegcc}]
By continuity, we may assume that $P(\ut)$ has rational coefficients and $A=(a_{ij})$ 
has positive rational entries; and up to a scalar multiple we can then assume that 
$A$ has positive integer entries and $P(\ut)=P_{[W]}(\ut)$ is the polynomial 
associated with an irreducible subvariety $W$ of $\Pbb:=(\Pbb^M)^{\ell+1}$, with 
$M\gg 0$. Explicitly,
\[
P(\ut)=\sum_{f_0+\cdots+f_\ell=d} \beta_{f_0\dots f_\ell} t_0^{f_0}\cdots t_\ell^{f_\ell}
\]
where
\[
[W]=\sum_{f_0+\cdots+f_\ell=d} \beta_{f_0\dots f_\ell} h_0^{f_0}\cdots h_\ell^{f_\ell}
\cap [(\Pbb^M)^{\ell+1}]
\]
and as usual $h_i$ denotes the pull-back of the hyperplane class from the corresponding
factor of $(\Pbb^M)^{\ell+1}$.

The codimension of $W$ is the degree $d$ of $P_{[W]}$. We choose an integer $S>d$,
implying in particular $(m+1)S>d$; and we may assume (cf.~Remark~\ref{rem:event}) that 
$M$ is sufficiently large to allow us to define Segre-Veronese embeddings
\[
\sigma_i:\quad (\Pbb^S)^{m+1} \to \Pbb^M
\]
for $i=0,\dots, \ell$, such that
\[
\sigma_i^*(h)=a_{i0} k_0+\cdots + a_{im} k_m
\]
where $h$ is the hyperplane class in $\Pbb^M$ and $k_0,\dots, k_m$ are the pull-backs
of the hyperplane classes from the factors. These embeddings determine an 
embedding 
\[
\varphi:\quad (\Pbb^S)^{m+1} \hookrightarrow (\Pbb^M)^{\ell+1}
\]
such that for $i=0,\dots,\ell$,
\[
\varphi^*(h_i)=a_{i0} k_0+\cdots + a_{im} k_m\saf.
\]
Now we may not have direct control of $\varphi^{-1}(W)$, but we can establish that 
the inverse image of a {\em translate\/} of $W$ is irreducible. 

\begin{claim}\label{cla:deb}
For a general $\gamma\in \PGL(M+1)^{\ell+1}$, the inverse image 
$\varphi^{-1}(^\gamma W)$ of the $\gamma$-translate of $W$ is irreducible.
\end{claim}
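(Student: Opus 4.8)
The plan is to deduce the irreducibility of $\varphi^{-1}({}^\gamma W)$ for general $\gamma$ from Debarre's connectedness theorem, exactly as advertised in the introduction. First I would reduce to a connectedness statement: since $\varphi^{-1}({}^\gamma W)$ is a closed subscheme of the smooth irreducible variety $(\Pbb^S)^{m+1}$, it suffices to show that (i) it is connected and (ii) it is generically smooth of pure dimension, so that no component can be embedded in another and connectedness forces irreducibility. Purity and generic smoothness will follow from a dimension count together with a Kleiman-type transversality argument: for general $\gamma\in\PGL(M+1)^{\ell+1}$, the translate ${}^\gamma W$ meets the image $\varphi((\Pbb^S)^{m+1})$ properly and transversally along a dense open subset, because $\PGL(M+1)^{\ell+1}$ acts transitively on each factor $\Pbb^M$ and hence the diagonal action on $(\Pbb^M)^{\ell+1}$ is transitive; thus the generic-transversality theorem applies and gives $\varphi^{-1}({}^\gamma W)$ pure of dimension $(m+1)S - d \ge 0$, smooth away from $\varphi^{-1}$ of the singular locus of ${}^\gamma W$ together with a lower-dimensional bad locus. (The choice $S > d$, hence $(m+1)S > d$, guarantees this expected dimension is positive, which is what will be needed for connectedness.)

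The heart of the argument is connectedness, and here is where I would invoke Debarre's extension of Fulton--Hansen~\cite{MR1420927}. The morphism $\varphi$ is the Segre--Veronese-type embedding $(\Pbb^S)^{m+1}\hookrightarrow(\Pbb^M)^{\ell+1}$ built from the $\sigma_i$; composing with ${}^\gamma(\cdot)$, what we must control is the inverse image under $\varphi$ of a subvariety of the multiprojective space $(\Pbb^M)^{\ell+1}$. Debarre's theorem says, roughly, that if $f\colon Y\to\Pbb^{N_0}\times\cdots\times\Pbb^{N_\ell}$ is a morphism from an irreducible variety and $Z$ is a subvariety, then $f^{-1}(Z)$ is connected provided a numerical inequality comparing $\dim Y$, $\operatorname{codim} Z$, and the dimensions $N_j$ holds, and provided the image of $f$ is not too degenerate with respect to the factors. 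So the next step is to check that the pair $\big((\Pbb^S)^{m+1},\varphi\big)$ and the subvariety ${}^\gamma W$ of codimension $d$ satisfy the hypotheses of Debarre's theorem. This amounts to: (a) verifying the relevant dimension inequality, which after substituting $\dim (\Pbb^S)^{m+1} = (m+1)S$, $\operatorname{codim}_{(\Pbb^M)^{\ell+1}} {}^\gamma W = d$, and $N_j = M$ for all $j$, reduces to an inequality that holds once $M\gg 0$ relative to $S$ and $d$ — this is why the freedom to enlarge $M$ in Remark~\ref{rem:event} was built in; and (b) verifying the non-degeneracy condition, namely that $\varphi$ does not factor through a proper ``coordinate subspace'' in any factor — but $\sigma_i^*(h) = a_{i0}k_0+\cdots+a_{im}k_m$ with all $a_{ij}$ positive integers, so each $\sigma_i$ is a genuine Segre--Veronese embedding and in particular is nondegenerate, hence $\varphi$ is nondegenerate in each of the $\ell+1$ factors of $(\Pbb^M)^{\ell+1}$.

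I expect the main obstacle to be matching the precise numerical and nondegeneracy hypotheses of Debarre's theorem to the present situation — in particular making sure that the inequality one needs is of the form ``$M$ sufficiently large'' rather than something that fails no matter how $M$ grows, and confirming that ``general $\gamma$'' suffices to achieve both the transversality needed for purity and generic smoothness and the genericity (if any) needed in Debarre's statement. Once connectedness and purity-plus-generic-smoothness are in hand, irreducibility is immediate: a connected, pure-dimensional scheme that is smooth on a dense open set is irreducible, because its irreducible components are then disjoint (they cannot meet, as a point in the intersection of two components would be a singular point lying in the smooth locus only if the components coincide there) yet their union is connected, forcing there to be just one. This completes the proof of the claim.
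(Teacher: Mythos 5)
Your plan correctly identifies Debarre's theorem as the engine, but the way you propose to use it has a genuine gap: the upgrade from connectedness to irreducibility does not work as stated. The assertion that ``a connected, pure-dimensional scheme that is smooth on a dense open set is irreducible because its components are disjoint'' is false: two components may meet along a proper closed subset contained in the singular locus (e.g., two planes in $\Pbb^4$ meeting at a point form a connected, pure $2$-dimensional scheme with dense smooth locus). Since $W$ is an arbitrary irreducible subvariety and may well be singular, Kleiman transversality for general $\gamma$ only gives smoothness of $\varphi^{-1}({}^\gamma W)$ away from $\varphi^{-1}$ of the singular locus of ${}^\gamma W$, and nothing prevents distinct components from meeting inside that locus; connectedness of the total preimage cannot rule this out. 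This is precisely why the paper does not argue via connectedness plus transversality at all: it invokes the \emph{irreducibility} part of Debarre's result, \cite[Th\'eor\`eme~2.2, 2)a)]{MR1420927}, which directly asserts that $\varphi^{-1}({}^\gamma W)$ is irreducible for general $\gamma$ under a numerical criterion on multidegrees, and then verifies that criterion.

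Your reading of the numerical hypothesis is also off target. The criterion is not a single dimension inequality that ``holds once $M\gg 0$''; it requires, for \emph{every} nonempty subset $I\subseteq\{0,\dots,\ell\}$, the existence of multidegrees $\underline e$ of $\varphi\big((\Pbb^S)^{m+1}\big)$ and $\underline f$ of $W$ with nonzero coefficients $\alpha_{e_0\dots e_\ell}\ne 0$, $\beta_{f_0\dots f_\ell}\ne 0$ and $\sum_{i\in I}(e_i+f_i)<|I|M$. Since $\sum_i e_i=(\ell+1)M-(m+1)S$, the $e_i$ scale with $M$, so enlarging $M$ creates no slack by itself; the slack comes from taking $e_i=M$ for $i\notin I$ (legitimate because all $\alpha_{\underline e}$ are nonzero, a consequence of the strict positivity of the entries of $A$ — this is where your ``nondegeneracy in each factor'' remark should be sharpened) and from the inequality $(m+1)S>d$, i.e., the choice $S>d$. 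The role of $M\gg 0$ is only to make the Segre--Veronese embeddings $\sigma_i$ possible and to have $M\ge(m+1)S$ so the $e_i$ can be chosen. So the correct proof is shorter than your plan: no transversality or smoothness argument is needed, just the computation of the multidegrees of the image of $\varphi$, their nonvanishing, and the displayed inequality.
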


This claim follows from a result of Olivier Debarre, \cite[Th\'eor\`eme~2.2, 2)a))]{MR1420927}. 
To verify this, let
\[
[\varphi(\Pbb^S)^{m+1}] 
=\sum_{e_0,\dots, e_\ell} \alpha_{e_0\dots e_\ell} h_0^{e_0}\cdots h_\ell^{e_\ell}
\cap [(\Pbb^M)^{\ell+1}]
\]
where the sum is over all nonnegative $e_0,\dots, e_\ell$ such that 
$\sum_i (M-e_i)=(m+1)S$. The coefficients $\alpha_{e_0\dots e_\ell}$ equal the intersection
numbers
\begin{align*}
\alpha_{e_0\dots e_\ell} &= \int h_0^{M-e_0}\cdots h_\ell^{M-e_\ell}\cap [\varphi(\Pbb^S)^{m+1}] \\
&=\int \prod_{i=0}^\ell (a_{i1} k_1 + \cdots + a_{im} k_m)^{M-e_i}\cap
[(\Pbb^S)^{m+1}]
\end{align*}
and in particular $\alpha_{e_0\dots e_\ell}\ne 0$ for all nonnegative $e_0,\dots, e_\ell$
such that $\sum_i (M-e_i)=(m+1)S$ since the entries of $A$ are assumed to be positive.

By Debarre's theorem, $\varphi^{-1}(^\gamma W)$ is irreducible
for a general $\gamma$ if for all nonempty subsets $I\subseteq\{0,\dots, \ell\}$
there exist $\underline e$, $\underline f$ such that
\[
\alpha_{e_0\dots e_\ell}\ne 0,\quad
\beta_{f_0\dots f_\ell}\ne 0,\quad
\text{and}\quad 
\sum_{i\in I} (e_i+f_i)<|I|M\saf.
\]
As observed, $\alpha_{e_0\dots e_\ell}\ne 0$ for all $\underline e$ such that 
$\sum_i (M-e_i)=(m+1)S$. As $W$ has codimension~$d$, there exist $\underline f$ with
$\sum_j f_j=d$ such that $\beta_{f_0\dots f_\ell}\ne 0$; note that for all 
$I\subseteq\{0,\dots, \ell\}$, $\sum_{i\in I} f_i\le d$. 
Given a nonempty $I\subseteq \{0,\dots, \ell\}$, let $e_i=M$ for $i\not\in I$ and choose
any $e_i\ge 0$ for $i\in I$ s.t.~$\sum_{i\in I} e_i = |I|M-(m+1)S$. (We can do this because
$I$ is not empty and $M\ge (m+1)S$.) Then we have
\[
\sum_{i\in I} (e_i+f_i) \le |I|M-(m+1)S+d<|I|M
\]
as we are assuming $(m+1)S>d$.

Thus the hypothesis of Debarre's theorem is satisfied and Claim~\ref{cla:deb} follows.\qede

The class of $\varphi^{-1}(^\gamma W)$ is 
\[
[\varphi^{-1}(^\gamma W)]=
\varphi^*([W])
=P_{[W]}(a_{00} k_0+\cdots + a_{0m} k_m,\dots, a_{\ell 0} k_0+\cdots + a_{\ell m} k_m)\saf.
\]
The conclusion is that 
\[
P_{[W]}(A\uu) = P_{[\varphi^{-1}(^\gamma W)]}(\uu)
\]
is the polynomial associated with the irreducible $\varphi^{-1}(^\gamma W)$, hence 
a covolume polynomial, as needed.
\end{proof}

\begin{corol}\label{cor:slc}
Covolume polynomials are sectional log-concave.
\end{corol}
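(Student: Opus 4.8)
The plan is to deduce this immediately from the two main results already in hand: Theorem~\ref{thm:nonnegcc}, which asserts that covolume polynomials are closed under substitutions $P(\ut)\mapsto P(A\uu)$ by matrices $A$ with nonnegative entries, and Lemma~\ref{lem:twovac}, which identifies the bivariate covolume polynomials with exactly the log-concave ones (together with the zero polynomial, as allowed by Definition~\ref{def:slc}).

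So let $P(\ut)\in\Rbb[t_0,\dots,t_\ell]$ be a covolume polynomial. Since sectional log-concavity is a property of homogeneous polynomials, we take $P$ homogeneous of degree $d=\deg P$, which is automatic as $P$ is a limit of positive multiples of the homogeneous polynomials $P_{[W]}$ (all of fixed degree once we fix the ambient product of projective spaces, cf.~Remark~\ref{rem:event}). Fix an arbitrary $(\ell+1)\times 2$ matrix $A$ with nonnegative real entries. By Theorem~\ref{thm:nonnegcc} (in the case $m=1$), the bivariate polynomial $P(A\binom{u}{v})\in\Rbb[u,v]$ is again a covolume polynomial, homogeneous of degree $d$ or identically zero. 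If it is identically zero, the condition of Definition~\ref{def:slc} is met trivially. Otherwise Lemma~\ref{lem:twovac} applies and shows that $P(A\binom{u}{v})$ is log-concave, i.e., its coefficients form a nonzero log-concave sequence of nonnegative reals with no internal zeros. As $A$ was arbitrary, $P$ is sectional log-concave.

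There is no real obstacle at this stage: the entire content resides in Theorem~\ref{thm:nonnegcc} (established via Debarre's extension of the Fulton--Hansen connectedness theorem) and in Lemma~\ref{lem:twovac} (established via Huh's log-concavity theorem for multidegrees of irreducible subvarieties of $\Pbb^n\times\Pbb^n$). The only points deserving a word of care are the homogeneity of $P$ and the degenerate possibility $P(A\binom{u}{v})=0$, both dispatched above; note also that only the $(\ell+1)\times 2$ instance of Theorem~\ref{thm:nonnegcc} is needed here.
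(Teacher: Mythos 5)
Your proof is correct and is essentially the paper's own argument: the corollary is deduced directly from Theorem~\ref{thm:nonnegcc} (the case of an $(\ell+1)\times 2$ nonnegative matrix) combined with Lemma~\ref{lem:twovac}, with the zero restriction handled as Definition~\ref{def:slc} allows. Your added remarks on homogeneity and the degenerate case are fine and do not change the route.
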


\begin{proof}
Given Definition~\ref{def:slc}, this is now an immediate consequence of 
Theorem~\ref{thm:nonnegcc} and Lemma~\ref{lem:twovac}.
\end{proof}

\begin{example}
(Cf.~Example~\ref{ex:notlor}.) The polynomial
\[
P(t_0,t_1,t_2):=
2t_0^7t_1^2 + 2t_0^6t_1^3 + 2t_0^6t_1^2t_2 + 2t_0^5t_1^4 + 4t_0^5t_1^3t_2 + 
6t_0^4t_1^4t_2
\]
as well as its normalization $N(P)$ are not Lorentzian, but $P$ {\em is\/} sectional log-concave,
as it is a covolume polynomial.
\qede\end{example}

\begin{remark}\label{rem:lcslc}
Let $P(u,v)$ be a homogeneous log-concave polynomial\footnote{Recall
that by this we mean that the coefficients of $P(u,v)$ form a log-concave sequence
{\em with no internal zeros.\/}}. By Lemma~\ref{lem:twovac},
$P$~is a covolume polynomial; by Theorem~\ref{thm:nonnegcc}, $P(A\binom uv)$ is
a covolume polynomial for all $2\times 2$ invertible matrices $A$ with nonnegative
entries; and then $P(A\binom uv)$ must be a log-concave polynomial, again by
Lemma~\ref{lem:twovac}. This particular case of Theorem~\ref{thm:nonnegcc}
implies that log-concave polynomials are indeed sectional log-concave, as claimed 
earlier in this section. 

One way to view this fact is as follows: for bivariate homogeneous polynomials, the 
property of having Lorentzian normalization is preserved by nonnegative coordinate
changes. We don't know if this is also the case for homogeneous polynomials in
more variables. It does not appear to be a direct consequence of the fact that
the property of being Lorentzian {\em is\/} preserved by nonnegative coordinate
changes (i.e., \cite[Theorem 2.10]{MR4172622}). 

The fact that the class of log-concave polynomials is preserved by nonnegative changes
of coordinates is easily seen to be equivalent to the assertion that if $f(t)$ is a (non-homogenous)
log-concave univariate polynomial, then so is $f(t+1)$. For alternative arguments proving
this fact, see~\cite[Theorem~2]{MR342424} and Corollary~8.4 in the survey \cite{MR1310575}.
\qede\end{remark}

\begin{remark}
As shown in Proposition~\ref{prop:covtolor}, covolume polynomials are `dually Lorentzian' 
in the sense of~\cite{dualLor}. 
Corollary~\ref{cor:slc} could be proved as a consequence of \cite[Theorem~1.3]{dualLor}. 
The proof given above is independent of~\cite{dualLor}.
\qede\end{remark}


\section{Segre classes}\label{partII}

We work over an algebraically closed field $k$; {\em schemes\/} are assumed to be of finite type
(but not necessarily reduced or irreducible).
The Segre class $s(Z,Y)$ of a closed embedding $Z\subseteq Y$ of schemes is a class in the 
Chow group $A_*(Z)$ recording important intersection-theoretic information about the embedding. 
For a thorough treatment of Segre classes, the reader is addressed to~\cite[Chapter~4]{85k:14004}.
By definition, the Segre class $s(Z,Y)$ is the Segre class of the cone $C_ZY$; in particular,
if $Z\subseteq Y$ is a regular embedding, then $s(Z,Y)=c(N_ZY)^{-1}\cap [Z]$ is the inverse
Chern class of the normal bundle of $Z$ in $Y$. Segre classes are preserved by birational
morphisms (\cite[Proposition~4.2(a)]{85k:14004}). These two properties characterize Segre classes:
the birational invariance reduces the computation of~$s(Z,Y)$ to the computation of $s(E,\Til Y)$,
where $E$ is the exceptional divisor in the blow-up $\Til Y$ of $Y$ along $Z$, and since $E$
is regularly embedded,
\[
s(E,\Til Y) = c(N_E\Til Y)^{-1}\cap [E] = \frac {[E]}{1+E}\saf,
\]
where
\[
\frac 1{1+E}=1-E+E^2-E^3+\cdots.
\]
(Cf.~\cite[Corollary~4.2.2]{85k:14004}.)

Segre classes are a key ingredient in the definition of the intersection product in Fulton-MacPherson 
intersection theory (\cite[Proposition~6.1(a)]{85k:14004}), with direct applications to enumerative
geometry. They can also be used to express important classical invariants such as multiplicity, 
Milnor numbers, local Euler obstructions, etc. For a survey of the role of Segre classes in the 
theory of singularities, see~\cite{MR4461013}.

Now let $I=\{F_i\}_{i=0,\dots, r}$ be a finite set of homogeneous polynomials in 
$k[x_0,\dots,x_n]$. For all $N\ge n$,
let $Z_N$ be the subscheme of $\Pbb^N$ defined by the ideal generated by $I$ 
in~$k[x_0,\dots, x_N]$; we may view $Z_N$ as a cone over $Z=Z_n$.
Let $\iota_N: Z_N\to \Pbb^N$ be the embedding. We consider the push-forwards 
$\iota_{N*} s(Z_N,\Pbb^N)$ of the corresponding Segre classes. 
In previous work we have observed that these classes are organized by a power series
\begin{equation}\label{eq:zeta}
\zeta_I(t):=\sum_{i\ge 0} s^{(i)} t^i
\end{equation}
with integer coefficients $s^{(i)}$, such that
\[
\iota_{N*} s(Z_N,\Pbb^N) = \sum_{i=0}^N s^{(i)} H^i\cap [\Pbb^N]\saf,
\]
where $H$ denotes the hyperplane class in $\Pbb^N$. The fact that $\zeta_I$ is well-defined
is the content of~\cite[Lemma~5.2]{MR3709134}.
The series $\zeta_I(t)$ only depends on the ideal generated by $I$ in~$k[x_0,\dots, x_n]$,
and should be viewed as an invariant of the ideal, but it is useful to keep track of the chosen
generating set $I$; this plays a role in the following statement.

\begin{theorem}[\cite{MR3709134}, Theorem~5.8]\label{thm:szf}
The power series $\zeta_I(t)$ is rational. More precisely, let $I=\{F_i\}_{i=0,\dots, r}$ 
and let $d_i=\deg F_i$; then
\[
\zeta_I(t)=\frac{P(t)}{\prod_{i=0}^r (1+d_i t)}
\]
where $P(t)\in \Zbb[t]$ is a polynomial with nonnegative coefficients, trailing term of degree
$\codim(I)$, and leading term $\prod_{i=0}^r d_i t^{r+1}$.
\end{theorem}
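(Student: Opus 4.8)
The plan is to unwind the blow-up description of Segre classes recalled above, realize the blow-up inside a projective bundle, and read the asserted rational form off a Grothendieck relation; the positivity of the numerator will then require a separate, more delicate, input. First I would fix a homogeneous generating set $F_0,\dots,F_r$ of $I$ with $\deg F_i=d_i$ and, invoking~\cite[Lemma~5.2]{MR3709134}, work with the cone $Z_N\subseteq\Pbb^N$ for a single large $N$, so that $\iota_{N*}s(Z_N,\Pbb^N)=\sum_i s^{(i)}H^i\cap[\Pbb^N]$ with $s^{(i)}$ the stabilized coefficients of $\zeta_I$. Writing $\pi\colon\Til X=\mathrm{Bl}_{Z_N}\Pbb^N\to\Pbb^N$ for the blow-up, with exceptional divisor $E$ and $\Til H=\pi^*H$, birational invariance of Segre classes together with the formula $s(E,\Til X)=[E]/(1+E)$ gives $\iota_{N*}s(Z_N,\Pbb^N)=\pi_*\bigl([E]/(1+E)\bigr)=[\Pbb^N]-\pi_*\bigl(1/(1+E)\bigr)$; for dimension reasons $\pi_*([E]^k)\in\Zbb\cdot H^k\cap[\Pbb^N]$, say $\pi_*([E]^k)=c_kH^k$ with $c_0=1$, so that $1-\zeta_I(t)=\sum_{k\ge 0}(-1)^kc_kt^k$.

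The structural input comes from observing that $\mathscr I_{Z_N}$ is a quotient of $\mathscr G^\vee:=\bigoplus_i\cO_{\Pbb^N}(-d_i)$, so the Rees algebra of $\mathscr I_{Z_N}$ is a graded quotient of $\mathrm{Sym}(\mathscr G^\vee)$, exhibiting $\Til X=\mathrm{Proj}(\mathrm{Rees})$ as a closed subvariety of $\Pbb(\mathscr G^\vee)$ on which $\cO_{\Pbb(\mathscr G^\vee)}(1)$ restricts to $\mathscr I_{Z_N}\cO_{\Til X}=\cO_{\Til X}(-E)$; thus $\xi:=c_1(\cO_{\Pbb(\mathscr G^\vee)}(1))$ restricts to $-[E]$. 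The Grothendieck relation $\prod_{i}(\xi+d_ih)=0$ on $\Pbb(\mathscr G^\vee)$ (the Chern roots of $\mathscr G^\vee$ being $-d_ih$, $h$ the pullback of the hyperplane class) therefore restricts to $\prod_{i=0}^r([E]-d_i\Til H)=0$ in $A^*(\Til X)$. Expanding this, multiplying by $[E]^{k-r-1}$, pushing forward and using the projection formula yields $\sum_{j=0}^{r+1}(-1)^j e_j(d_0,\dots,d_r)\,c_{k-j}=0$ for all $k\ge r+1$ ($e_j$ the $j$th elementary symmetric function), after $N\to\infty$. Equivalently $\bigl(\prod_i(1-d_ix)\bigr)\bigl(\sum_k c_kx^k\bigr)$ is a polynomial of degree $\le r$; substituting $x=-t$ and using $1-\zeta_I(t)=\sum_k c_k(-t)^k$ gives $\prod_i(1+d_it)\,(1-\zeta_I(t))=R(t)$ with $R\in\Zbb[t]$, $\deg R\le r$, $R(0)=1$. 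Hence $\zeta_I(t)=P(t)/\prod_i(1+d_it)$ with $P=\prod_i(1+d_it)-R\in\Zbb[t]$; since $\deg R<r+1$ and $\prod_id_i\ne 0$, $P$ has degree exactly $r+1$ and leading coefficient $\prod_{i=0}^rd_i$. For the trailing term I would note that $s(Z_N,\Pbb^N)$ is supported in dimensions $\le\dim Z_N$, forcing $s^{(k)}=0$ for $k<\codim(I)$, while its top-dimensional part $\sum_j m_j[Z_N^{(j)}]$ has positive geometric multiplicities, so $\zeta_I$ — and hence $P$, as $\prod_i(1+d_it)$ is a unit in $\Zbb[[t]]$ — vanishes to order exactly $\codim(I)$.

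The remaining point is the nonnegativity of the coefficients of $P$, which I expect to be the real obstacle. Here I would realize $\Til X$ inside a product of projective spaces. When $d_0=\dots=d_r=d$ one has $\Pbb(\mathscr G^\vee)\cong\Pbb^N\times\Pbb^r$ with $\cO(1)$ corresponding to $u-dh$ (here $u$ the hyperplane class of $\Pbb^r$), so $\Til X$ is an honest subvariety of $\Pbb^N\times\Pbb^r$ with effective class $[\Til X]=\sum_{a+b=r}\gamma_{ab}h^au^b$, $\gamma_{ab}\ge 0$, $\gamma_{0r}=1$, and $u|_{\Til X}=d\Til H-[E]$ is globally generated (it is $\pi^*\cO(d)\otimes\mathscr I_{Z_N}\cO_{\Til X}$). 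Pushing $1/(1+E)=\sum_m(\xi|_{\Til X})^m$ forward through $\Pbb^N\times\Pbb^r$ produces the closed form $1-\zeta_I(t)=\sum_{a=0}^r\gamma_{a,r-a}\,t^a/(1+dt)^{a+1}$, so that $R(t)=\sum_a\gamma_{a,r-a}t^a(1+dt)^{r-a}$ already has nonnegative coefficients; the estimate $\gamma_{a,r-a}=\int_{\Til X}\Til H^{N-a}(d\Til H-[E])^a\le d^a$ (an intersection of nef classes against the effective divisor $[E]$) together with the hockey-stick identity $\sum_{a\le k}\binom{r-a}{k-a}=\binom{r+1}{k}$ gives $R_k\le\binom{r+1}{k}d^k$ coefficientwise, i.e.\ $P=(1+dt)^{r+1}-R$ has nonnegative coefficients. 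The general case I would handle by the same scheme with $\Pbb(\mathscr G^\vee)\cong\Pbb\bigl(\bigoplus_i\cO(d_r-d_i)\bigr)$ embedded in a suitable $\Pbb^N\times\Pbb^M$, or by first reducing to equal degrees (replacing $I$ by its degree-$d_r$ part, which defines the same subscheme once $N\gg 0$).

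In summary, everything up to and including the degree, leading-coefficient and trailing-term statements is essentially formal once the projective-bundle relation $\prod_i([E]-d_i\Til H)=0$ is available, and the hard part is the nonnegativity of the numerator: there one genuinely needs effectivity of a cycle in a product of projective spaces and positivity of intersections of nef and effective classes — precisely the circle of phenomena that the present paper develops systematically (and substantially refines, via Theorem~\ref{thm:mainII}).
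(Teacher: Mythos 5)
Most of your proposal is sound, and the paper itself does not reprove this statement (it cites \cite{MR3709134}); so the comparison is with that proof and with the closely related machinery the paper develops for Theorems~\ref{thm:R} and~\ref{thm:P}. Your derivation of rationality, of the precise denominator $\prod_i(1+d_it)$, of integrality, of the leading coefficient $\prod_i d_i$, and of the trailing order $\codim(I)$ via the blow-up, birational invariance, and the Grothendieck relation $\prod_{i=0}^r([E]-d_i\Til H)=0$ on $\Til X\subseteq\Pbb(\oplus_i\cO(-d_i))$ is correct (your sign conventions come out right, and the $N\to\infty$ stabilization is legitimately absorbed by \cite[Lemma~5.2]{MR3709134}). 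In the equal-degree case your positivity argument is also complete and correct: effectivity of $[\Til X]$ in $\Pbb^N\times\Pbb^r$ gives $\gamma_{a,r-a}\ge0$ and hence nonnegativity of the numerator of $1-\zeta_I$, and the bound $\gamma_{a,r-a}\le d^a$ (nef classes against the effective divisor $E$) plus the hockey-stick identity gives nonnegativity of $P$.

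The genuine gap is the mixed-degree case, which is exactly where the stated denominator matters, and neither of your proposed reductions works. Replacing $I$ by its degree-$d_r$ part changes the generating set and therefore the denominator: you would prove nonnegativity of $\zeta_I(t)(1+d_rt)^{r'+1}$ for some (large, $N$-dependent) $r'$, and this does not imply nonnegativity of $\zeta_I(t)\prod_{i=0}^r(1+d_it)$, since passing between the two means multiplying by a power series with negative coefficients; the theorem is asserted for \emph{any} generating set with its given degrees $d_0,\dots,d_r$. Likewise, embedding $\Pbb(\oplus_i\cO(d_r-d_i))$ into $\Pbb^N\times\Pbb^M$ and pushing forward yields an expansion whose denominators are powers of $(1+d_rt)$ alone (with $M$ growing with $N$), so the connection with $\prod_i(1+d_it)$ is lost. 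The standard way to close this, entirely within your framework, is to stay on $\Til X$: the surjection $\pi^*(\oplus_i\cO(-d_i))\to\mathscr I_{Z_N}\cO_{\Til X}=\cO(-E)$ dualizes to a subbundle inclusion $\cO(E)\hookrightarrow\pi^*(\oplus_i\cO(d_i))$, whose quotient $\cR$ is globally generated with $c(\cR)=\prod_i(1+d_i\Til H)/(1+E)$; then the class representing $\prod_i(1+d_it)\,\zeta_I(t)$, namely $c(\oplus_i\cO(d_i))\cap\iota_{N*}s(Z_N,\Pbb^N)$, equals the pushforward of $c(\cR|_E)\cap[E]$, and its coefficients are intersections of nef classes (powers of $\Til H$ and Chern classes of the globally generated $\cR$) with the effective cycle $[E]$, hence nonnegative — with no equal-degree hypothesis and no irreducibility hypothesis on $E$. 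This is precisely the mechanism of Proposition~\ref{prop:irremap} and the proofs of Theorems~\ref{thm:R} and~\ref{thm:P} (where irreducibility of $E$ is needed only for the stronger covolume conclusion, not for bare nonnegativity), and it is the missing ingredient your sketch needs for arbitrary degrees.
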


As a consequence of this result, the polynomial
\[
\prod_{i=0}^r (1+d_i t) - P(t)
\]
has degree $\le r$. In this section we will prove that {\em the coefficients of this polynomial
form a log-concave sequence of nonnegative integers with no internal zeros.\/} 
We will further prove that the coefficients of $P(t)$ also form a log-concave sequence of
nonnegative integers with no internal zeros, provided that the normal cone of $Z$ in $\Pbb^n$
is irreducible.

We will view the set-up described above as a particular case of the following more general
situation. Let $Z\subseteq \Pbb^{n_1}\times\cdots\times \Pbb^{n_\ell}$ be a closed subscheme,
whose ideal $I$ is generated by a finite set of multihomogeneous polynomials.
We consider the subscheme $Z_{\uN}$ defined by the same polynomials in 
$\Pbb^{\uN}:=\Pbb^{N_1}\times\cdots\times \Pbb^{N_\ell}$, for all $\uN=(N_1,\dots, N_\ell)$ 
with $N_j\ge n_j$. The analogue of~\eqref{eq:zeta} is a power series
\begin{equation}\label{eq:szfell}
\zeta_I(t_1,\dots, t_\ell) :=\sum_{i\ge 0} s^{(i_1\dots i_\ell)} t_1^{i_1}\cdots t_\ell^{i_\ell}
\end{equation}
such that for all $\uN$ as above, and denoting by $\iota_{\uN}$ the inclusion $Z_{\uN}
\hookrightarrow \Pbb^{\uN}$ and by $h_j$ the pull-back of the hyperplane class from 
the $j$-th factor, 
\begin{equation}\label{eq:szfprop}
\iota_{\uN *} s(Z_{\uN},\Pbb^{\uN}) = \sum_{i_j\le N_j} s^{(i_1\dots i_\ell)} 
h_1^{i_1}\cdots h_\ell^{i_\ell}\cap [\Pbb^{\uN}]\saf.
\end{equation}
The natural extension of Theorem~\ref{thm:szf} holds for this power series.

\begin{theorem}\label{thm:szf2}
The power series $\zeta_I(t_1,\dots,t_\ell)$ is rational. More precisely, let 
$(e_{1k},\dots, e_{\ell k})$, $k=0,\dots,r$, be the multidegrees of the elements of~$I$, 
and let
\[
Q(t_1,\dots, t_\ell)=\prod_{k=0}^r (1+e_{1k} t_1+\cdots+e_{\ell k} t_\ell)\saf.
\]
Then
\[
\zeta_I(t_1,\dots, t_\ell)=\frac{P(t_1,\dots, t_\ell)}{Q(t_1,\dots, t_\ell)}
\]
where $P(t_1,\dots, t_\ell)\in \Zbb[t_1,\dots, t_\ell]$ is a polynomial with nonnegative 
coefficients, trailing term of total degree $\codim(I)$, and leading term equal to
 $\prod_{k=0}^r (e_{1k} t_1+\cdots+e_{\ell k} t_\ell)$.
\end{theorem}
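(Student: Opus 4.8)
The plan is to reduce the multivariate statement to the univariate Theorem~\ref{thm:szf} by a substitution argument, exploiting the fact that the Segre zeta function of a cone over $Z$ in larger projective spaces is computed by the same power series. First I would recall the construction of $\zeta_I$ in the multihomogeneous setting: for $\uN=(N_1,\dots,N_\ell)$ with $N_j\ge n_j$ large, $Z_{\uN}\subseteq\Pbb^{\uN}$ is the cone defined by a fixed generating set of $I$, and $\iota_{\uN*}s(Z_{\uN},\Pbb^{\uN})$ is expressed by~\eqref{eq:szfprop}. The key geometric input is the birational-invariance description of Segre classes recalled in the text: blow up $\Pbb^{\uN}$ along $Z_{\uN}$, and compute $s(E,\Til{\Pbb^{\uN}})=[E]/(1+E)$. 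Choosing a generating set of $I$ in degrees $(e_{1k},\dots,e_{\ell k})$, $k=0,\dots,r$, one maps $\Til{\Pbb^{\uN}}$ to the product of projective bundles $\Pbb(\bigoplus_k\cO(-e_{1k},\dots,-e_{\ell k}))$ (i.e. the blow-up sits inside the incidence correspondence cut out by the generators), so that $E$ is pulled back from a hyperplane-type class $\xi$ there; then the push-forward $\iota_{\uN*}s(Z_{\uN},\Pbb^{\uN})$ is computed by pushing $\xi^j/(1+\xi)$-type classes down the projective-bundle maps. The Segre-class formula for a sum of line bundles gives exactly the denominator $Q(t_1,\dots,t_\ell)=\prod_k(1+e_{1k}t_1+\cdots+e_{\ell k}t_\ell)$, together with a correction term supported where the generators fail to be independent; rationality and the precise shape of $Q$ fall out of this computation, which is the direct analogue of the argument in~\cite{MR3709134} for $\ell=1$.

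Next I would establish the three properties of the numerator $P=\zeta_I\cdot Q$. That $P$ has nonnegative integer coefficients: from $\zeta_I\in\Zbb[[t_1,\dots,t_\ell]]$ and $Q$ monic with positive coefficients, $P$ has integer coefficients; nonnegativity is not automatic from this, so one argues as in the univariate case that the residual-intersection formula for Segre classes (Fulton, Prop.~4.4 / Cor.~4.2.2 in~\cite{85k:14004}) expresses $P$ in terms of push-forwards of effective classes — concretely, $P(t_1,\dots,t_\ell)$ records $s(\text{residual scheme})$ data on the product of projective bundles, all of whose entries are nonnegative. For the trailing term: the lowest-degree part of $\zeta_I$ is $[Z]$-theoretic, namely the codimension of $Z_{\uN}$ (independent of $\uN$), which equals $\codim(I)$, and $Q$ has constant term $1$, so the trailing term of $P$ has total degree $\codim(I)$. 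For the leading term: as $N_j\to\infty$ the top-degree behavior of $\zeta_I$ is governed by the generic fiber of the blow-down, i.e. by a complete intersection of $r+1$ hypersurfaces of multidegrees $(e_{1k},\dots,e_{\ell k})$ meeting properly; its Segre-class contribution produces $\prod_k(e_{1k}t_1+\cdots+e_{\ell k}t_\ell)$ exactly, which then becomes the leading term of $P=\zeta_I Q$ after cancellation against $Q$'s leading term.

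I expect the main obstacle to be the careful bookkeeping in the projective-bundle push-forward: in the univariate case one works with a single $\Pbb^r$-bundle and a single hyperplane class, whereas here one has an $\ell$-fold product of bundles and must track each $h_j$ separately, verify that the Segre class of $\bigoplus_k\cO(-e_{1k},\dots,-e_{\ell k})$ produces precisely $Q$, and control the residual term so that the combined numerator has the asserted trailing and leading multidegrees. A secondary technical point is checking that $\zeta_I$ is genuinely well-defined as a power series independent of $\uN$ — i.e. the multivariate analogue of~\cite[Lemma~5.2]{MR3709134} — which I would handle by the same stabilization argument: passing from $\uN$ to $\uN'$ with $N'_j\ge N_j$ replaces $Z_{\uN}$ by a cone, and the blow-up and its projective-bundle ambient space are compatibly pulled back, so the coefficients $s^{(i_1\dots i_\ell)}$ stabilize. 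Once these are in place, rationality with denominator $Q$ and the stated properties of $P$ follow by assembling the pieces exactly as in the proof of Theorem~\ref{thm:szf}.
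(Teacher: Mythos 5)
Your proposal follows essentially the paper's own approach: the paper gives no independent argument for Theorem~\ref{thm:szf2}, stating only that it is proved by following the blueprint of the univariate case in~\cite{MR3709134} (and, for $\ell=2$, \cite[\S5.2]{MR4099922}), which is precisely the blow-up/projective-bundle push-forward scheme you outline, with the denominator $Q$ arising from the Segre class of $\oplus_k\cO(e_{1k},\dots,e_{\ell k})$ and the numerator's nonnegativity coming from degrees of effective classes. The loose ends you flag (the exact twist relating $E$ to the tautological class, the effectivity bookkeeping, and the stabilization in $\uN$) are exactly the details the cited univariate proof supplies, so your plan matches the intended argument.
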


A proof of this result may be obtained by following the same blueprint as the proof of
Theorem~\ref{thm:szf} given in~\cite{MR3709134}; for the case $\ell=2$, also
see~\cite[\S5.2]{MR4099922}. 

We focus on the polynomial
\[
R(t_1,\dots,t_\ell)=Q(t_1,\dots, t_\ell)-P(t_1,\dots, t_\ell)\saf,
\]
for which
\[
1-\zeta_I(t_1,\dots, t_\ell)=\frac{R(t_1,\dots, t_\ell)}{Q(t_1,\dots, t_\ell)}\saf.
\]
As a consequence of Theorem~\ref{thm:szf2}, $R(t_1,\dots, t_\ell)$ is a polynomial with
integer coefficients and total degree $\le r$. Our main goal is to establish log-concavity
properties of these polynomials. As they are not necessarily homogeneous, we adapt 
Definition~\ref{def:slc} accordingly.

\begin{defin}\label{def:slc2}
A polynomial $f(t_1,\dots,t_\ell)\in\Rbb[t_1,\dots,t_\ell]$ is {\em sectional log-concave\/}
if for all $\ell\times 2$ matrices $A$ with nonnegative real entries, the polynomial 
$p(A\binom 1v)$ is either identically~$0$ or its coefficients form a log-concave sequence
of nonnegative real numbers with no internal zeros.
\qede\end{defin}

\begin{remark}\label{rem:homdehom}
If $f$ is homogeneous, then it is sectional log-concave in the sense of Definition~\ref{def:slc2}
if and only if it is in the sense of Definition~\ref{def:slc}. For any $f(t_1,\dots,t_\ell)$, 
$f$ is sectional log-concave if any homogenization $F(t_0,\dots, t_\ell)$ of $f$
is sectional log-concave. Indeed, assume $F$ is homogeneous and $f=F|_{t_0=1}$.
Then $f(A\binom 1v)=F(u,A\binom uv)|_{u=1}$, and $F(u,A\binom uv)$ is log-concave
or identically $0$ if $F$ is sectional log-concave.
\qede\end{remark}

If $f$ is obtained by setting one of the variables of a homogeneous polynomial $F$ to $1$, 
we will say that $f$ is a {\em de-homogenization\/} of $F$.
We will say that $f(t_1,\dots,t_\ell)\in\Rbb[t_1,\dots,t_\ell]$ is {\em M-convex\/}
if it is the {de-homoge\-nization} of an M-convex polynomial.

A precise result can be established for the polynomials $R(t_1,\dots, t_\ell)$ arising
as numerators of $1-\zeta_I$.

\begin{theorem}\label{thm:R}
Let $I$ be a finite set of multihomogeneous forms of multidegrees 
$(e_{1k},\dots, e_{\ell k})$, $k=0,\dots,r$.
Define the polynomial $R(t_1,\dots,t_\ell)$ by the identity
\[
1-\zeta_I(t_1,\dots, t_\ell)=\frac{R(t_1,\dots, t_\ell)}
{\prod_{k=0}^r (1+e_{1k} t_1+\cdots+e_{\ell k} t_\ell)}\saf.
\]
Then $R(t_1,\dots, t_\ell)$ is the de-homogenization of a covolume polynomial
of degree~$r$. In particular:
\begin{itemize}
\item $R(t_1,\dots,t_\ell)$ has nonnegative coefficients and $M$-convex support;
\item $R(t_1,\dots,t_\ell)$ is sectional log-concave;
\item If $R=\sum_{0\le i_j \le r_j} b_{i_1\dots i_\ell} t_1^{i_1}\cdots t_\ell^{i_\ell}$, then
the normalization of the polynomial
\[
\sum_{0\le i_j \le r_j} b_{i_1\dots i_\ell} u_0^{i_1+\cdots +i_\ell} u_1^{r_1-i_1}\cdots 
u_\ell^{r_\ell-i_\ell}
\]
is Lorentzian;
\item In the univariate case, i.e., $\ell=1$, the coefficients of $R(t_1)$ form a log-concave 
sequence with no internal zeros.
\end{itemize}
\end{theorem}

It is natural to ask whether the numerator of $\zeta_I$ itself satisfies the same constraints. 
We can prove that this is the case, but only subject to an irreducibility hypothesis.

\begin{theorem}\label{thm:P}
Let $Z\subseteq \Pbb^{n_1}\times\cdots\times \Pbb^{n_\ell}$ be a closed subscheme whose 
ideal is generated by a set $I$ of multihomogeneous forms of multidegrees 
$(e_{1k},\dots, e_{\ell k})$, $k=0,\dots,r$.
Define the polynomial $P(t_1,\dots,t_\ell)$ by the identity
\[
\zeta_I(t_1,\dots, t_\ell)=\frac{P(t_1,\dots, t_\ell)}
{\prod_{k=0}^r (1+e_{1k} t_1+\cdots+e_{\ell k} t_\ell)}\saf.
\]
Assume that the projectivized normal cone 
$\Pbb(C_Z (\Pbb^{n_1}\times\cdots\times \Pbb^{n_\ell}))$ is
irreducible. Then $P(t_1,\dots, t_\ell)$ is the de-homogenization of a covolume polynomial
of degree~$r+1$.
In particular, 
\begin{itemize}
\item $P(t_1,\dots,t_\ell)$ has nonnegative coefficients and $M$-convex support; 
\item $P(t_1,\dots,t_\ell)$ is sectional log-concave;
\item If $P=\sum_{0\le i_j \le r_j} a_{i_1\dots i_\ell} t_1^{i_1}\cdots t_\ell^{i_\ell}$, then
the normalization of the polynomial
\[
\sum_{0\le i_j \le r_j} a_{i_1\dots i_\ell} u_0^{i_1+\cdots +i_\ell} u_1^{r_1-i_1}\cdots 
u_\ell^{r_\ell-i_\ell}
\]
is Lorentzian;
\item In the univariate case, i.e., $\ell=1$, the coefficients of $P(t_1)$ form a log-concave 
sequence with no internal zeros.
\end{itemize}
\end{theorem}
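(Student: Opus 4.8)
The plan is to exhibit the relevant polynomial as arising from the Chern class of a globally generated bundle on an irreducible variety mapping to a product of projective spaces, so that Proposition~\ref{prop:irremap} applies directly; the irreducibility hypothesis on the projectivized normal cone is exactly what guarantees that the source variety in that construction is irreducible. Concretely, I would first recall the geometric origin of $\zeta_I$: the Segre class $s(Z_{\uN},\Pbb^{\uN})$ is the Segre class of the normal cone $C:=C_Z\Pbb^{\uN}$, and, after embedding $\Pbb(C)$ in a projective bundle over $Z_{\uN}$ cut out by the generators of $I$ (of multidegrees $(e_{1k},\dots,e_{\ell k})$), one computes its push-forward to $\Pbb^{\uN}$ via the projection formula. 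The bundle whose total Chern class governs this is, up to twist, the direct sum $\bigoplus_{k=0}^r \cO(e_{1k}h_1+\cdots+e_{\ell k}h_\ell)$ restricted to $\Pbb(C)$; because all $e_{jk}\ge 0$, each summand is globally generated on the product of projective spaces and hence on any subvariety, so the sum $\cR$ is a globally generated bundle of rank $r+1$ on the irreducible variety $X:=\Pbb(C)$.

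Next I would make precise the identity relating $q_*(c(\cR)\cap[X])$ — where $q:X\to\Pbb^{\uN}$ is the (proper) composition of the embedding $X\hookrightarrow\Pbb(\text{bundle})$ with the bundle projection to $Z_{\uN}$ and the inclusion $Z_{\uN}\hookrightarrow\Pbb^{\uN}$ — to the numerator $P(t_1,\dots,t_\ell)$ of $\zeta_I$. The point is that $s(Z_{\uN},\Pbb^{\uN}) = \pi_*(c(\cO(1)^{\oplus?})^{-1}\cap\cdots)$ in the standard Segre-class formula can be rewritten, using $\cR$ globally generated and the projection formula over the projective bundle $\Pbb(\cdots)$, so that the polynomial $\prod_{k}(1+e_{1k}t_1+\cdots)\cdot\zeta_I$, i.e.\ $P$, matches the shape $\sum a_{i_1\dots i_\ell}t_1^{i_1}\cdots t_\ell^{i_\ell}$ appearing in Proposition~\ref{prop:irremap} with the coefficients $a_{i_1\dots i_\ell}$ being the multidegrees $\int h_1^{n_1-i_1}\cdots h_\ell^{n_\ell-i_\ell}\cap q_*(c(\cR)\cap[X])$. (That $P$ has the degrees, trailing term, and leading term asserted in Theorem~\ref{thm:szf2} confirms the bookkeeping: the leading term $\prod_k(e_{1k}t_1+\cdots+e_{\ell k}t_\ell)$ is the top Chern class of $\cR$, consistent with $\deg\cR=r+1$ — note the homogenization in the statement uses exponents summing appropriately to make the total degree $r+1$.) Once this identification is in place, Proposition~\ref{prop:irremap} immediately gives that the homogenization of $P$ is a covolume polynomial, and then the four bulleted consequences follow from Corollary~\ref{cor:Mconv}, Corollary~\ref{cor:slc}, Proposition~\ref{prop:covtolor}, and (for $\ell=1$) Lemma~\ref{lem:twovac} together with Remark~\ref{rem:homdehom}, exactly as in Theorem~\ref{thm:R}.

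The main obstacle, and the only place where the hypothesis is used, is verifying that $X=\Pbb(C_Z\Pbb^{\uN})$ can be taken irreducible and that this property is stable under passing from $Z=Z_{\un}$ to the cones $Z_{\uN}$ for $\uN\gg\un$. Irreducibility of the projectivized normal cone over the original $Z$ is the stated assumption; I would then argue that $C_{Z_{\uN}}\Pbb^{\uN}$ is, up to the cone construction on the projective-space factors, a product/cone over $C_Z\Pbb^{\un}$ — because the defining equations of $Z_{\uN}$ are literally the same polynomials — so its projectivization remains irreducible, allowing the choice of $\uN\gg0$ required to land in the covolume-polynomial framework (cf.\ Remark~\ref{rem:event}). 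A secondary subtlety is that the Segre-class computation produces $c(\cR)$ capped against $[\Pbb(C)]$ possibly with a correction coming from the $\cO(1)$ of the projective bundle; I would absorb this by the same globally-generated-twist trick used to prove Theorem~\ref{thm:szf2}, noting that the twisting line bundle $\cO(h_j)$ is itself globally generated so that the relevant bundle stays globally generated and Proposition~\ref{prop:irremap} still applies. Apart from these points the argument is formal, running parallel to (and slightly more delicate than, owing to the irreducibility requirement) the proof of Theorem~\ref{thm:R}.
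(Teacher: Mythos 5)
Your overall strategy coincides with the paper's: realize the class lifted by $P$ as the push-forward of the Chern class of a globally generated bundle on the (irreducible, by hypothesis) projectivized normal cone, invoke Proposition~\ref{prop:irremap}, deduce the four bullets from Corollaries~\ref{cor:Mconv}, \ref{cor:slc}, Proposition~\ref{prop:covtolor} and Lemma~\ref{lem:twovac}, and use that $Z_{\uN}$ is a cone over $Z$ so that irreducibility of $\Pbb(C_Z(\Pbb^{n_1}\times\cdots\times\Pbb^{n_\ell}))$ propagates to all $\uN\gg 0$. All of that is fine and is exactly how the paper proceeds.

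The genuine gap is at the central step, the identification of the globally generated bundle. The class whose lift is $P$ is $c(\cN)\cap\iota_{\uN*}s(Z_{\uN},\Pbb^{\uN})$ with $\cN=\oplus_{k=0}^r\cO_1(e_{1k})\otimes\cdots\otimes\cO_\ell(e_{\ell k})$, and computing the Segre class through the exceptional divisor $E=\Pbb(C_{Z_{\uN}}\Pbb^{\uN})\subseteq\Til\Pbb^{\uN}$ gives
\[
(\nu\circ j)_*\bigl(c(j^*\nu^*\cN)\,c(N_E\Til\Pbb^{\uN})^{-1}\cap[E]\bigr)\saf,
\]
which is \emph{not} the push-forward of $c\bigl(\cN|_{\Pbb(C)}\bigr)\cap[\Pbb(C)]$: the factor $c(N_E\Til\Pbb^{\uN})^{-1}$ is an infinite series coming from the tautological line bundle of $\Pbb(C)$, it is not the Chern class of any bundle, and it is not removed by twisting by $\cO(h_j)$ (a pullback from the base), so your proposed ``globally-generated-twist trick'' does not address it. The rank-$(r+1)$ bundle you use is also inconsistent with the stated degree: Proposition~\ref{prop:irremap} applied to a rank-$(r+1)$ bundle on the $(\sum_j N_j-1)$-dimensional variety $\Pbb(C)$ would produce a covolume polynomial of degree $r+2$, not $r+1$. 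The missing idea, which is the paper's key maneuver, is that the chosen generators embed $\cO(E)$ as a subbundle of $\nu^*\cN$, and one must take the rank-$r$ quotient $\cR=\nu^*\cN/\cO(E)$ (equivalently, the universal quotient bundle of $\Pbb(\cN|_{Z_{\uN}})$ restricted to $\Pbb(C)$): then $c(j^*\cR)\cap[E]$ pushes forward exactly to the class above, and $\cR$ is globally generated because it is a \emph{quotient} of the globally generated $\nu^*\cN$, not because of any twist. With that substitution (and with the small precision that one takes $N_j\ge\max(n_j,r+1)$, using the codimension bound of \cite[Example~6.1.6]{85k:14004}, so that $P$ is literally the lift of the class rather than a congruence modulo $h_j^{N_j+1}$), your argument becomes the paper's proof.
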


We will informally refer to the polynomials $P$, resp.~$R$, in these statements as the `numerators'
of $\zeta_I$, resp.~$1-\zeta_I$. We note that the series $\zeta_I$ only depends on the ideal
generated by $I$, while these polynomials depend on the chosen set $I$ of generators for
this ideal. 

\begin{example}\label{ex:charnos}
Segre classes have applications in enumerative geometry; the prototypical example is
the computation of the number 3264 of smooth plane conics that are tangent to five general
smooth conics, cf.~\cite[Examples~9.1.8, 9.1.9]{85k:14004}. 

The {\em characteristic numbers\/} for the family of smooth plane curves of degree~$d$ are 
the numbers of such curves that contain a selection of general points and are tangent to
a selection of general lines. For plane cubics, the characteristic numbers are
\[
1,\quad 4,\quad 16,\quad 64,\quad 976,\quad 3424,\quad 9766,\quad 21004,\quad 33616
\]
(\cite{Maillard,MR951637,MR1108634}): for example, there are $\oldstylenums{33{,}616}$ 
cubics tangent to~$9$ lines in general position.
The information of the characteristic numbers is equivalent to the information of the push-forward 
to the $\Pbb^9$ of
plane cubics of the Segre class of a scheme naturally supported on the set of non-reduced 
curves:
\[
48[\Pbb^4] - 480[\Pbb^3] + 3930[\Pbb^2]  - 38220[\Pbb^1]  + 372960[\Pbb^0]\saf.
\]
This scheme is cut out by a set $I$ of ten {\em quartic\/} hypersurfaces, and it
follows that the corresponding Segre zeta function is
\begin{equation}\label{eq:Szcub}
\zeta_I(t):=\frac{48t^5+1440t^6+19290t^7+142020t^8+567840t^9+1048576t^{10}}{(1+4t)^{10}}\saf.
\end{equation}
The numerator of $1-\zeta_I(t)$ equals
\[
1+40t + 720t^2 + 7680t^3 + 53760t^4 + 258000t^5 + 858720t^6 + 1946790t^7 + 2807100t^8 + 2053600t^9
\]
and as prescribed by Theorem~\ref{thm:R} the coefficients of this polynomial form a log-concave
sequence with no internal zeros.
\qede\end{example}

\begin{remark}
The characteristic numbers may be interpreted as the multidegrees of the closure of the 
graph of the duality map (in the case of cubics, the map associating to a smooth cubic the
corresponding dual sextic). It follows that they form a log-concave sequence of integers 
with no internal zeros, by~\cite[Theorem~21]{MR2904577}.
\qede\end{remark}

Our main tool in the proof of Theorems~\ref{thm:R} and~\ref{thm:P} will be a
result providing a large supply of polynomials that are de-homogenizations of covolume 
polynomials.

\begin{prop}\label{prop:irremap}
Let $X$ be an irreducible variety and let
\[
q: X \to \Pbb^{n_1}\times\cdots\times \Pbb^{n_\ell}
\]
be a proper map. Let $\cR$ be a globally generated vector bundle of rank $r$ on $X$, and write
\begin{equation}\label{eq:ccEZL}
q_*(c(\cR)\cap [X]) = \sum_{0\le i_j\le n_j} a_{i_1\dots i_\ell} h_1^{i_1}\cdots h_\ell^{i_\ell}
\cap [\Pbb^{n_1}\times\cdots \times \Pbb^{n_\ell}]\saf.
\end{equation}
Then the polynomial
\begin{equation}\label{eq:dehomcv}
\sum_{0\le i_j\le n_j} a_{i_1\dots i_\ell} t_1^{i_1}\cdots t_\ell^{i_\ell}
\end{equation}
is the de-homogenization of a covolume polynomial of degree $\rkR+(\sum n_j) -\dim X$.
\end{prop}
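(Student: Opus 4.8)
The plan is to reduce the statement to the two-variable covolume case via the connection between Chern classes of globally generated bundles, sections of associated bundles, and multidegrees of irreducible subvarieties. The key observation is that if $\cR$ is globally generated of rank $r$ on the irreducible variety $X$, then a general section $s$ of $\cR$ vanishes on a subscheme $Y\subseteq X$ whose class is $c_r(\cR)\cap[X]$, and more generally the Chern classes $c_i(\cR)\cap[X]$ are represented (after intersecting with general sections of $\cO(1)$-twists, or by the splitting-principle/degeneracy-locus description) by effective cycles supported on loci arising from general sections of $\cR\oplus\cO^{\oplus m}$. To make this clean I would pass to the projective bundle $\pi:\Pbb(\cR^\vee)\to X$ (or $\Pbb(\cR)$, with the convention matching globally generated $\Rightarrow$ $\cO_{\Pbb}(1)$ globally generated), where $\cO_{\Pbb(\cR^\vee)}(1)$ is globally generated; the map $q$ together with $\pi$ and the globally generated line bundle $\cO(1)$ on $\Pbb(\cR^\vee)$ gives a morphism $\Pbb(\cR^\vee)\to \Pbb^{n_1}\times\cdots\times\Pbb^{n_\ell}\times \Pbb^{M}$ for $M\gg 0$, under which the hyperplane class $h_0$ of the last factor pulls back to $\xi=c_1(\cO_{\Pbb(\cR^\vee)}(1))$. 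Since $\pi_*(\xi^{r-1+j}\cap[\Pbb(\cR^\vee)])=s_j(\cR)\cap[X]$ (Segre classes of $\cR$), and since $c(\cR)=1/s(\cR)$ only up to sign conventions, I would instead arrange things so that the coefficients $a_{i_1\dots i_\ell}$ appear directly as multidegrees: concretely, the total Chern class $c(\cR)\cap[X]$ can be written as $\pi_*$ of an effective combination if one uses that $c(\cR)\cap[X] = c(\cO(1))\cdots$ — but the cleanest route is the following.

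First I would invoke the standard fact that for a globally generated bundle $\cR$ of rank $r$ on irreducible $X$, one has $c(\cR)\cap[X] = \sum_{j=0}^{r} c_j(\cR)\cap[X]$ with each $c_j(\cR)\cap[X]$ represented by the class of a (possibly reducible) effective cycle; but to get an \emph{irreducible} subvariety — as Definition~\ref{def:covolume} requires — I would instead realize the \emph{whole} truncated polynomial at once. Consider the vector bundle $\cE := \cR\oplus\cO_X^{\oplus N}$ for $N\gg 0$; it is globally generated of rank $r+N$, and $c(\cE)=c(\cR)$. A general section of $\cE\otimes q^*\cO(d_1,\dots,d_\ell)$, or rather a suitable construction on $\Pbb(\cE^\vee)$, will give an irreducible variety. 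The precise mechanism I would use: let $Y = \Pbb(\cE^\vee) \xrightarrow{p} X \xrightarrow{q} \Pbb^{\underline n}$, so $Y$ is irreducible (projective bundle over irreducible base). The line bundle $\cO_Y(1)$ is globally generated and, together with $p^*q^*h_j$, defines a morphism $\psi: Y \to \Pbb^{\underline n}\times \Pbb^M$. Then by Debarre's connectedness theorem (as used in the proof of Theorem~\ref{thm:nonnegcc}), applied to a general linear section of $Y$ inside $\Pbb^{\underline n}\times\Pbb^M$ of the appropriate dimension, one extracts an \emph{irreducible} subvariety $W$ whose class $P_{[W]}(h_0,h_1,\dots,h_\ell)$ has, as its coefficients of the monomials $h_0^{(\sum i_j)} h_1^{r_1-i_1}\cdots h_\ell^{r_\ell-i_\ell}$ type, precisely the numbers $a_{i_1\dots i_\ell}$ — after the substitution $h_0\leftrightarrow$ the Segre-class variable. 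The de-homogenization at $t_0=1$ then recovers \eqref{eq:dehomcv}, and the degree of the covolume polynomial is $\codim W = \rk\cR + (\sum n_j) - \dim X$ by a dimension count: $\dim Y = \dim X + (r-1) + N$, the product of projective spaces has dimension $(\sum n_j)+M$, and choosing the linear section to cut down to the relevant dimension gives the stated codimension (the $N$ and $M$ cancel by Remark~\ref{rem:event}).

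The main obstacle — and the step I would spend the most care on — is establishing \emph{irreducibility} of the subvariety $W$ whose multidegree polynomial equals \eqref{eq:dehomcv}, and verifying that its class genuinely has the coefficients $a_{i_1\dots i_\ell}$ with the correct normalization. The first half is a Debarre-theorem argument essentially parallel to Claim~\ref{cla:deb}: one must check the hypotheses of \cite[Th\'eor\`eme~2.2]{MR1420927} for the image of $\psi$ against a general translate of a linear subspace, which reduces to nonvanishing of certain intersection numbers (here guaranteed because $\cO_Y(1)$ is $q$-ample and the $h_j$ are globally generated, so all the relevant mixed intersection numbers on the product are strictly positive — using that $\cE$ has the $\cO^{\oplus N}$ summand makes $\cO_Y(1)$ behave well). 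The second half is the bookkeeping identity $\psi_*([\text{general section class}]) = c(\cR)\cap[X]$ pushed to the product, which is the projection-formula computation $p_*(\xi^{r+N-1+j}\cap[Y]) = s_j(\cE)\cap[X] = s_j(\cR)\cap[X]$ combined with the inversion $c(\cR) = s(\cR)^{-1}$ packaged correctly — the sign/inversion conventions are the place where a routine-looking calculation can go wrong, so I would set them up explicitly by working with $\Pbb(\cR^\vee)$ and $\cO(1)$ globally generated, where $\pi_*(\xi^{r-1+j}) = s_j(\cR^\vee) = (-1)^j s_j$... and then correct to $c(\cR)$ via the elementary relation rather than trying to guess it. Once irreducibility and the coefficient identification are in hand, Definition~\ref{def:covolume} and Remark~\ref{rem:homdehom} give the conclusion immediately, and the four bulleted-style consequences (M-convexity, sectional log-concavity, Lorentzian normalization of the homogenization, univariate log-concavity) follow from Corollaries~\ref{cor:Mconv}, \ref{cor:slc}, Proposition~\ref{prop:covtolor}, and Lemma~\ref{lem:twovac} respectively.
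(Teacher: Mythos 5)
There is a genuine gap, and it sits exactly where you flag the ``routine-looking calculation'': your construction produces \emph{Segre} classes of $\cR$, not its Chern classes, and no step in your plan converts one into the other. If you work on $\Pbb(\cR^\vee)$ or $\Pbb(\cE^\vee)$ with $\cE=\cR\oplus\cO_X^{\oplus N}$, the pushforwards of powers of $\xi=c_1(\cO(1))$ give the classes $s_j(\cE)\cap[X]=s_j(\cR)\cap[X]$; the coefficients $a_{i_1\dots i_\ell}$ you need come from $c(\cR)\cap[X]=s(\cR)^{-1}\cap[X]$, and the formal inversion $c=s^{-1}$ is not realized by any geometric operation available to you afterwards. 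Cutting the image by general linear sections only multiplies the class by hyperplane classes (shifting multidegrees), so there is no mechanism by which the irreducible $W$ you extract would have precisely the $a_{i_1\dots i_\ell}$ as its multidegrees; the ``bookkeeping identity'' is not a convention issue but the actual obstruction. The fix is to projectivize the \emph{kernel} of the evaluation map rather than $\cR$ itself: with $V$ a finite-dimensional generating space of sections and $\cK=\ker(V\otimes\cO_X\twoheadrightarrow\cR)$, one has $c(\cR)=s(\cK)$, and this is how the paper proceeds. Concretely, the paper views $\Pbb(\cK)\subseteq\Pbb(V)\times X$ as the zero scheme of a regular section of $p_2^*\cR\otimes\cO_{\cV}(1)$, so that
\[
[\Pbb(\cK)]=\sum_{i=0}^{\rkR}(p_1^*H^{\rkR-i})(p_2^*c_i(\cR))\cap[\Pbb(V)\times X]\saf,
\]
i.e., the auxiliary hyperplane class $H$ homogenizes the \emph{total Chern class} of $\cR$ in a single effective, irreducible cycle ($\Pbb(\cK)$ is a projective bundle over the irreducible $X$).

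A secondary point: your appeal to Debarre's theorem is both unverified and unnecessary here. In the paper's argument irreducibility is free --- $\Pbb(\cK)$ is irreducible and its proper image under $\id\times q$ is irreducible, so $(\id\times q)_*[\Pbb(\cK)]$ is a nonnegative multiple of the class of an irreducible subvariety of $\Pbb(V)\times\Pbb^{n_1}\times\cdots\times\Pbb^{n_\ell}$; setting $t_0=1$ in the associated polynomial gives \eqref{eq:dehomcv}, with degree $\rkR+(\sum n_j)-\dim X$ by the codimension count. Debarre-type genericity enters the paper only in the proof of Theorem~\ref{thm:nonnegcc}, not in this proposition, and in your setup the hypotheses (nonvanishing of the relevant mixed multidegrees of $\psi(Y)$ against the $\Pbb^M$ factor) are not checked and would in any case not repair the Segre-versus-Chern mismatch above.
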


\begin{proof}
Let $V$ be a $(D+1)$-dimensional vector space of global sections 
generating~$\cR$. Consider the product
\[
\xymatrix{
& \Pbb(V)\times X \ar[ld]_{p_1} \ar[rd]^{p_2} \\
\Pbb(V) & & X
}\saf,
\]
and denote by $H$ the hyperplane class in $\Pbb(V)$. 
Let $\cK$ be the kernel of the 
surjection $\cV=V\otimes \cO_X \to \cR$.
Identify $\Pbb(V)\times X$ with~$\Pbb(\cV)$ and view $\Pbb(\cK)$ as a codimension-$\rkR=\rk\cR$
subscheme of $\Pbb(V)\times X$; also note that $p_1^*H=c_1(\cO_\cV(1))$.
By definition, $\Pbb(\cK)$ is the zero-scheme of the composition
\[
\cO_\cV(-1) \hookrightarrow p_2^*\cV \twoheadrightarrow p_2^*\cR\saf;
\]
therefore, of the corresponding section $\cO\to p_2^*\cR \otimes \cO_\cV(1)$.
This section is regular (\cite[B.5.6]{85k:14004}), and it follows 
that the class $[\Pbb(\cK)]$ in $A_*(\Pbb(V)\times X)$ is the top Chern class of 
$p_2^*\cR\otimes \cO_\cV(1)$:
\begin{equation}\label{eq:larcla}
[\Pbb(\cK)]=\sum_{i=0}^\rkR (p_1^* H^{\rkR-i}) (p_2^* c_i(\cR))\cap [\Pbb(V)\times X]
\end{equation}
in $A_{D+\dim X-r}(\Pbb(V)\times X)$. Next, consider the proper map
\[
\xymatrix{
\Pbb(V)\times X \ar[rr]^-{\id\times q} && \Pbb(V)\times \Pbb^{n_1}\times\cdots\times \Pbb^{n_\ell}\saf.
}
\]
We claim that
\begin{equation}\label{eq:clmWup}
(\id\times q)_*([\Pbb(\cK)]) = \sum_{i=0}^\rkR \sum_{\sum i_j=i-\dim X+\sum n_j} a_{i_1\dots i_\ell} 
H^{\rkR-i} h_1^{i_1}\cdots h_\ell^{i_\ell}\cap
[\Pbb(V)\times \Pbb^{n_1}\times\cdots\times \Pbb^{n_\ell}]
\end{equation}
with $a_{i_1 \dots i_\ell}$ as in~\eqref{eq:ccEZL}, and where $H, h_j$ denote the pull-backs
of the corresponding classes to the product 
$\Pbb(V)\times \Pbb^{n_1}\times\cdots\times \Pbb^{n_\ell}$.

To prove~\eqref{eq:clmWup}, consider the diagram
\[
\xymatrix@C=10pt@R=10pt{
& \Pbb(V)\times X \ar[ld]_{p_1} \ar[rr]^-{p_2} \ar[dd]^{\id\times q} & & X \ar[dd]^q \\
\Pbb(V) \\
& \Pbb(V)\times \Pbb^{n_1}\times\cdots\times \Pbb^{n_\ell} \ar[lu]_{\pi_1} \ar[rr]^-{\pi_2} 
& & \Pbb^{n_1}\times\cdots\times \Pbb^{n_\ell}
}
\]
We have
\[
p_1^* = (\id \times q)^*\circ \pi_1^*
\quad\text{and}\quad
(\id \times q)_* \circ p_2^* = \pi_2^* \circ q_*
\]
(\cite[Proposition~1.7]{85k:14004}). By~\eqref{eq:larcla}, these identities and the projection 
formula give
\begin{align*}
(\id\times q)_*([\Pbb(\cK)]) &=\sum_{i=0}^\rkR (\id\times q)_*((p_1^* H^{\rkR-i}) (p_2^* c_i(\cR))\cap
[\Pbb(V)\times X]) \\
&=\sum_{i=0}^\rkR (\pi_1^* H^{\rkR-i}) \cap \pi_2^* q_*(c_i(\cR)\cap [X])\saf.
\end{align*}
Using~\eqref{eq:ccEZL}, we see that this class equals
\[
\sum_{i=0}^\rkR (\pi_1^* H^{\rkR-i})\cap \pi_2^* \sum_{\sum i_j = i-\dim X+\sum n_j}
a_{i_1\dots i_\ell} h_1^{i_1}\cdots h_\ell^{i_\ell} \cap
[\Pbb^{n_1}\times\cdots\times \Pbb^{n_\ell}]
\]
and~\eqref{eq:clmWup} follows. 

Since by definition $(\id\times q)_*([\Pbb(\cK)])$ is a multiple of the class of the irreducible 
subvariety $(\id\times q)(\Pbb(\cK))$, this shows that
\[
\sum_{i=0}^\rkR \sum_{\sum i_j=i-\dim X+\sum n_j} a_{i_1\dots i_\ell} 
t_0^{\rkR-i} t_1^{i_1}\cdots t_\ell^{i_\ell}
\]
is a covolume polynomial. The polynomial~\eqref{eq:dehomcv} is obtained from this 
polynomial by setting $t_0=1$, so this proves the statement.
\end{proof}

\begin{example}
The degrees of the Chern classes of a globally generated bundle over projective space
form a log-concave sequence of nonnegative integers with no internal zeros.
This follows from Proposition~\ref{prop:irremap} and Lemma~\ref{lem:twovac} in the very 
particular case where $\ell=1$ and $q$ the identity map $\Pbb^{n_1} \to \Pbb^{n_1}$. 
\qede\end{example}

Theorems~\ref{thm:R} and~\ref{thm:P} are consequences of Proposition~\ref{prop:irremap}.

\begin{proof}[Proof of Theorems~\ref{thm:R} and~\ref{thm:P}]
In both statements, the four listed properties are formal consequence of the assertion
that the polynomial is the de-homogenization of a covolume polynomial. Specifically, 
the first property follows from Corollary~\ref{cor:Mconv}; the second from 
Corollary~\ref{cor:slc} (cf.~Remark~\ref{rem:homdehom}); the third from 
Proposition~\ref{prop:covtolor}; and the last property from Lemma~\ref{lem:twovac}.

Therefore, it suffices to prove that $R(t_1,\dots, t_\ell)$ is a de-homogenization of a
covolume polynomial, and so is $P(t_1,\dots, t_\ell)$ if 
$\Pbb(C_Z(\Pbb^{n_1}\times\cdots\times \Pbb^{n_\ell}))$ is irreducible.

Let $\uN=(N_1,\dots, N_\ell)$, with $N_j\ge n_j$ for all $j$.
Denote by $\cO_j(1)$ the pull-back of the hyperplane line bundle from the $j$-th factor
of $\Pbb^{\uN}$. The subscheme $Z_{\uN}$ of $\Pbb^{\uN}$ is cut out by hypersurfaces
$X_k$, $k=0,\dots,r$, with $\cO(X_k)=\cO_1(e_{1k})\otimes\cdots \otimes \cO_\ell(e_{\ell k})$.
We have the fiber square
\begin{equation}\label{eq:CD1}
\begin{gathered}
\xymatrix{
Z_{\uN} \ar@{^(->}[r]^-{\iota_{\uN}} \ar[d]_\delta & \Pbb^{\uN} \ar[d]^\Delta \\
X_0\times\cdots\times X_r \ar@{^(->}[r] & \Pbb^{\uN}\times\cdots\times \Pbb^{\uN}
}
\end{gathered}
\end{equation}
where $\Delta$ is the diagonal embedding.
This is an instance of the situation considered in~\cite[\S6.1]{85k:14004};
the Fulton-MacPherson intersection product $(X_0\times\cdots\times X_r)\cdot \Pbb^{\uN}$
is one term in the class
\[
\delta^*(N_{X_0\times\cdots\times X_r}(\Pbb^{\uN}\times\cdots\times \Pbb^{\uN}))
\cap s(Z_{\uN},\Pbb^{\uN})
=c(\iota_{\uN}^* \cN)\cap s(Z_{\uN},\Pbb^{\uN})\saf,
\]
where
\[
\cN=\oplus_{k=0}^r \left(\cO_1(e_{1k})\otimes\cdots \otimes \cO_\ell(e_{\ell k})\right)\saf.
\]
By~\cite[Example~6.1.6]{85k:14004}, this class only has terms in codimension~$\le r+1$.
Thus, its push-forward
\begin{align*}
\iota_{\uN*} \big( c(\iota_{\uN}^* \cN)\cap s(Z_{\uN},\Pbb^{\uN}) \big)
&=c(\cN)\cap \iota_{\uN*}s(Z_{\uN},\Pbb^{\uN}) \\
&=\prod_{k=0}^r (1+e_{1k} h_1+\cdots+e_{\ell k} h_\ell) \cap \iota_{\uN *} s(Z_{\uN},\Pbb^{\uN})
\end{align*}
may be written as a polynomial 
\[
\sum_{0\le i_j\le N_j} a_{i_1\dots i_\ell} h_1^{i_1}\cdots h_\ell^{i_\ell} \cap [\Pbb^{\uN}]
\]
of total degree $\le (r+1)$. 
By~\eqref{eq:szfell}, \eqref{eq:szfprop}, and Theorem~\ref{thm:szf2},
\[
P(h_1,\dots, h_\ell)=\prod_{k=0}^r (1+e_{1k} h_1+\cdots+e_{\ell k} h_\ell) \cap \iota_{\uN *} 
s(Z_{\uN},\Pbb^{\uN})
\]
in $A_*(\Pbb^{\uN})$, that is, modulo $h_j^{N_j+1}$ for all $j$. Taking $N_j\ge\max(n_j,r+1)$,
we get the equality {\em of polynomials\/}
\[
P(t_1,\dots, t_\ell)=\sum_{0\le i_j\le N_j} a_{i_1\dots i_\ell} t_1^{i_1}\cdots t_\ell^{i_\ell}
\in \Zbb[t_1,\dots, t_\ell]\saf.
\]
Summarizing, let $N_j\gg 0$ for all $j$; then the polynomial 
$P(t_1,\dots, t_\ell)\in \Zbb[t_1,\dots, t_\ell]$ is the unique lift of degree $\le N_j$ in $t_j$
of the class
\begin{equation}\label{eq:clasP}
c(\cN)\cap \iota_{\uN*} s(Z_{\uN},\Pbb^{\uN})\saf.
\end{equation}
It also follows that $R(t_1,\dots, t_\ell)$ is the unique lift of degree $\le N_j$ in $t_j$
of the class
\begin{equation}\label{eq:clasR}
c(\cN)\cap (1-\iota_{\uN*} s(Z_{\uN},\Pbb^{\uN}))\saf.
\end{equation}

Next, consider the blow-up $\Til \Pbb^{\uN}$ of $\Pbb^{\uN}$ along $Z_{\uN}$, with 
exceptional divisor $E=\Pbb(C_{Z_{\uN}}\Pbb^{\uN})$ and notation as in the following diagram:
\begin{equation}\label{eq:CD2}
\begin{gathered}
\xymatrix{
E \ar[r]^-j \ar[d]_\rho & \Til \Pbb^{\uN} \ar[d]^\nu \\
Z_{\uN} \ar[r]_{\iota_{\uN}} & \Pbb^{\uN}
}
\end{gathered}
\end{equation}
By the birational invariance of Segre classes, 
\[
\iota_{\uN*} s(Z_{\uN},\Pbb^{\uN}) = \nu_* j_* s(E,\Til \Pbb^{\uN}) \\
= \nu_* j_* \left( c(N_E\Til \Pbb^{\uN})^{-1}\cap [E] \right)\saf.
\]
Stacking diagrams~\eqref{eq:CD1} and~\eqref{eq:CD2} gives a fiber square
\[
\xymatrix{
E \ar[r]^-j \ar[d]_{\delta\circ \rho} & \Til \Pbb^{\uN} \ar[d]^{\Delta\circ\nu} \\
X_0\times\cdots\times X_r \ar@{^(->}[r] & \Pbb^{\uN}\times\cdots\times \Pbb^{\uN}
}
\]
whose excess intersection bundle (in the sense of~\cite[\S6.3]{85k:14004}) is
\[
(\delta\circ\rho)^*N_{X_0\times\cdots\times X_r} 
(\Pbb^{\uN}\times\cdots\times \Pbb^{\uN})/N_E\Til \Pbb^{\uN}
=\rho^*\iota_{\uN}^* \cN/j^*\cO(E) = j^*\cR
\]
with
\[
\cR=\nu^*(\oplus_{k=0}^r \left(\cO_1(e_{1k})\otimes\cdots \otimes \cO_\ell(e_{\ell k})\right))/\cO(E)
\saf.
\]
With this notation, and repeatedly using the projection formula, 
the class in~\eqref{eq:clasP} (represented by the polynomial $P(t_1,\dots, t_\ell)$) 
may be rewritten as
\begin{align*}
c(\cN)\cap \iota_{\uN*} s(Z_{\uN},\Pbb^{\uN}) &= 
c(\cN)\cap \nu_* j_* \left( c(N_E\Til \Pbb^{\uN})^{-1}\cap [E] \right) \\
&= (\nu\circ j)_*\left( c(j^*\nu^*\cN) c(N_E\Til \Pbb^{\uN})^{-1}\cap [E] \right) \\
&= (\nu\circ j)_*\left( c(\rho^* \iota_{\uN}^*\cN) c(j^*\cO(E))^{-1}\cap [E] \right) \\
&= (\nu\circ j)_*\left( c(j^*\cR)\cap [E] \right) \saf.
\end{align*}
Since $\cR$ is generated by global sections, Proposition~\ref{prop:irremap} implies that
{\em if $E=\Pbb(C_{Z_{\uN}}\Pbb^{\uN})$ is irreducible, then\/} $P(t_1,\dots, t_\ell)$ is
the de-homogenization of a covolume polynomial. This establishes the first part of
Theorem~\ref{thm:P}, since $\Pbb(C_{Z_{\uN}}\Pbb^{\uN})$ is irreducible if and only if
$\Pbb(C_Z (\Pbb^{n_1}\times\cdots\times \Pbb^{n_\ell}))$ is (recall that $Z_{\uN}$ is a
cone over $Z$).

Concerning the class in~\eqref{eq:clasR}, represented by the polynomial 
$R(t_1,\dots, t_\ell)$:
\begin{align*}
c(\cN)\cap (1-\iota_{\uN*} s(Z_{\uN},\Pbb^{\uN})) &= 
c(\cN)\cap \nu_*\left(1- \frac{ [E]}{1+E} \right) \\
&= c(\cN)\cap \nu_*\frac{[\Til\Pbb^{\uN}]}{1+E} \\
&= \nu_*\left( c(\nu^*\cN) c(\cO(E))^{-1}\cap [\Til\Pbb^{\uN}]\right) \\
&= \nu_*\left( c(\cR)\cap [\Til\Pbb^{\uN}]\right)\saf.
\end{align*}
Since $\cR$ is generated by global sections and $[\Til \Pbb^{\uN}]$ is
irreducible, Proposition~\ref{prop:irremap} implies (unconditionally) that
$R(t_1,\dots, t_\ell)$ is the de-homogenization of a covolume polynomial,
establishing the first part of Theorem~\ref{thm:R} and completing the proof.
\end{proof}

As proved above, the polynomials $P$ and $R$ determined by (a set of
polynomials defining) a closed subscheme
$Z\subseteq \Pbb^{n_1}\times\cdots\times \Pbb^{n_\ell}$ are closely related to 
Lorentzian polynomials. It is natural to inquire whether their homogenizations may 
themselves be Lorentzian, perhaps after normalization. 

For $\ell=1$, the {\em normalization\/} of the numerator $R$ of $1-\zeta_I$ is Lorentzian, 
as a consequence of Theorem~\ref{thm:R} and of \cite[Example~2.26]{MR4172622}.
It is not necessarily Lorentzian itself.

\begin{example}\label{ex:RnotLor}
Let $Z=\Pbb^{n-3}$ viewed as the subscheme of $\Pbb^n$ defined by the ideal of
$k[x_0,\dots,x_n]$ generated by $I=\{x_0,x_1,x_2\}$. 
If $H$ denotes the hyperplane class in $\Pbb^n$, the Segre class $s(Z,\Pbb^n)=
c(N_{\Pbb^{n-3}}\Pbb^n)^{-1}\cap [\Pbb^{n-3}]$ pushes forward to $H^3/(1+H)^3\cap [\Pbb^n]$,
and it follows that
\[
1-\zeta_I(t_1)=1-\frac{t_1^3}{(1+t_1)^3}=\frac{1+3t_1+3t_1^2}{(1+t_1)^3}\saf.
\]
The homogenization $t_0^2+3t_0t_1+3t_1^2$ is not Lorentzian, i.e., the sequence $1,3,3$
is not {\em ultra-}log-concave.
Its normalization {\em is\/} Lorentzian, i.e., the sequence is log-concave, as prescribed by 
Theorem~\ref{thm:R}.

For another example, 
let $I=\{x_0,y_0,z_0\}\subseteq k[x_0,\dots, x_n; y_0,\dots, y_n; z_0,\dots, z_n)$, defining 
an inclusion of $\Pbb^{n-1}\times\Pbb^{n-1}\times\Pbb^{n-1}$ in $\Pbb^n\times\Pbb^n\times\Pbb^n$, 
for all $n\ge 1$. This is also a complete intersection, and it follows that
\[
1-\zeta_I(t_1,t_2,t_3)=1-\frac{t_1 t_2 t_3}{(1+t_1)(1+t_2)(1+t_3)}
=\frac{1+t_1+t_2+t_3+t_1 t_2+t_1 t_3+t_2 t_3}{(1+t_1)(1+t_2)(1+t_3)}\saf,
\]
so that the homogenization of $R(t_1,t_2,t_3)$ is
\begin{equation}\label{eq:Rex}
t_0^2+t_0t_1+t_0t_2+t_0t_3+t_1 t_2+t_1 t_3+t_2 t_3\saf.
\end{equation}
This polynomial is sectional log-concave and M-convex (as prescribed by Theorem~\ref{thm:R}), 
but it is not Lorentzian. We note that its normalization
\[
\frac 12 t_0^2+t_0t_1+t_0t_2+t_0t_3+t_1 t_2+t_1 t_3+t_2 t_3
\]
{\em is\/} Lorentzian.
\qede\end{example}

We know of no example of a closed subscheme 
$Z\subseteq \Pbb^{n_1}\times\cdots\times \Pbb^{n_\ell}$
for which the normalization of the homogenization of the numerator of $1-\zeta_I$
is not Lorentzian. This does not appear to follow directly from Theorem~\ref{thm:R}.
We pose it as a question.

\begin{quest}\label{que:norLor}
Is the {\em normalization\/} of the homogenization of the numerator of $1-\zeta_I$ always 
a Lorentzian polynomial?
\end{quest}

The conjecture stated in~\S\ref{S:intro} proposes that the answer to this question should 
be affirmative.
We have verified that this is the case for several hundred randomly chosen monomial ideals.

Concerning the polynomial $P$ of Theorem~\ref{thm:P}, that is, the `numerator of $\zeta_I$',
we note that the hypothesis of irreducibility of the normal cone cannot be removed.

\begin{example}\label{ex:Pnotcov}
Let $I=\{x_0 y_0,x_0 z_0\}\subseteq k[x_0,\dots, x_n; y_0,\dots, y_n; z_0,\dots, z_n]$, 
defining a closed subscheme $Z\subseteq \Pbb:=\Pbb^n\times\Pbb^n\times\Pbb^n$, $n\gg 0$.
The support of this subscheme is the union of an irreducible divisor and a codimension-two 
subvariety; $Z$ is not irreducible, and therefore its normal cone is not irreducible.
The function $\zeta_I(t_1,t_2,t_3)$ may be determined as follows: with evident notation,
the Segre class of $Z$ is $(h_1+\text{h.o.t.})\cap [\Pbb]$, since this is the case away from
the codimension-$2$ component; since $I$ is generated by divisors with classes $h_1+h_2$,
$h_1+h_3$, we must have
\[
\zeta_I(t_1,t_2,t_3)=\frac{t_1+(t_1+t_2)(t_1+t_3)}{(1+t_1+t_2)(1+t_1+t_3)}
\]
by Theorem~\ref{thm:szf2}.
(For an alternative argument, compute the Segre class of $Z$ in $(\Pbb^n)^3$ by `residual
intersection', \cite[Proposition~9.2]{85k:14004} or~\cite[Proposition~3]{MR96d:14004}; and
then let $n\to \infty$.) The homogenization of the numerator $P(t_1,t_2,t_3)$ is
\begin{equation}\label{eq:Pcoun}
t_0 t_1 + t_1^2 +t_1 t_2+t_1t_3+t_2t_3\saf;
\end{equation}
the support of this polynomial consists of the points 
\[
(1,1,0,0)\quad,\quad
(0,2,0,0)\quad,\quad
(0,1,1,0)\quad,\quad
(0,1,0,1)\quad,\quad
(0,0,1,1)
\]
and is {\em not\/} M-convex: for $\alpha=(1,1,0,0)$ and $\beta=(0,0,1,1)$ we have $\alpha_0
>\beta_0$ and there is no $j$ such that $\beta_j>\alpha_j$ and $\alpha-e_0+e_j$, 
$\beta-e_j+e_0$ are both in $S$. This polynomial is also not sectional log-concave:
setting $t_0=4u,t_1=u,t_2=v,t_3=v$ gives $5u^2+2uv+v^2$, and $2^2\not\ge 5\cdot 1$.

By Proposition~\ref{prop:covtolor}, the numerator is not the de-homogenization of a
covolume polynomial. 
\qede\end{example}

Since the homogenization of the polynomial $P$ from Example~\ref{ex:Pnotcov} is not M-convex,
it also follows that it is not Lorentzian and neither is its normalization. In fact, the following
simple example shows that the numerator of $\zeta_I$ is not necessarily Lorentzian before 
normalization even in the univariate (i.e., $\ell=1$) case. 
(The Segre zeta function~\eqref{eq:Szcub} in Example~\ref{ex:charnos} also provides an example.)

\begin{example}
For any $n$, let $Z\subseteq \Pbb^n$ consist of a hyperplane $\Pbb^{n-1}$ with an embedded
component along the transversal intersection of two smooth quadric hypersurfaces in this 
hyperplane. More precisely, let $Z$ be the subscheme of $\Pbb^n$ defined by 
$I=\{x_0^2, x_0 Q_1,x_0 Q_2\} \subseteq k[x_0,\dots, x_n]$, where $Q_1$ and 
$Q_2$ are general homogeneous quadratic polynomials. The Segre class of $Z$ may be
computed by residual intersection, and this yields
\[
\zeta_I(t_1)=\frac{t_1+7t_1^2+18t_1^3}{(1+2 t_1)(1+3t_2)^2}\saf.
\]
The polynomial $t_1+7t_1^2+18t_1^3$ is not ultra-log-concave, so the homogenization
\[
t_0^2 t_1 + 7 t_0 t_1^2 + 18 t_1^3
\]
is not Lorentzian. The same polynomial is log-concave, therefore the {\em normalization\/}
of the homogenization is Lorentzian.
\qede\end{example}

It would be interesting to establish to what extent the irreducibility hypothesis in
Theorem~\ref{thm:P} can be weakened. 

\begin{quest}\label{quest:numsz}
For what subschemes of a product of projective spaces
is the numerator of $\zeta_I$ necessarily sectional log-concave?
\end{quest}

B.~Story~\cite{story} has verified that the numerator of $\zeta_I$ {\em is\/} log-concave 
for several families of subschemes $Z\subseteq \Pbb^n$ (that is, the $\ell=1$ case) 
not satisfying any {\em a priori\/} irreducibility condition.


\section{Adjoint polynomials}\label{partIII}

Our main motivation in establishing Theorems~\ref{thm:R} and~\ref{thm:P} is the general study
of Segre classes: constraints on the possible numerators of Segre zeta functions translate
into constraints on what classes can be Segre classes of subschemes of e.g., projective space,
thus may be an aid in their computation. In this section we provide an alternative motivation 
for this work, by interpreting Theorem~\ref{thm:R} in the special case of {\em monomial\/} 
ideals. There is a connection between Segre zeta functions of monomial ideals and {\em adjoint
polynomials,\/} first noted by Kathl\'en Kohn and Kristian Ranestad 
(\cite[Proposition~1]{MR4156995}). Kohn and Ranestad focused on the numerator of $\zeta_I$.
We recover an analogous result for the numerator of $1-\zeta_I$, with the advantage that the
corresponding polytopal object is convex. In short, we will prove that adjoint polynomials of certain
convex polytopes are necessarily covolume polynomials; in particular, they are M-convex and
sectionally log-concave.

Following Joe Warren (\cite[\S2]{MR1431788}), we consider {\em polyhedral cones,\/} that is, 
convex hulls of finite sets of rays emanating from the origin in an $\Rbb$-vector space $V$.
For a set of nonzero vectors $S\in V$, we will denote by $\cP_S$ the convex polyhedral cone
obtained by taking the convex hull of the rays through the vectors $\uv\in S$.

For example, one may consider $\cP_S$ for $S$ the set of vertices of a convex polytope
embedded in a hyperplane $x_0=1$. It is convenient to think of polyhedral cones as a 
`projective' version of polytopes.

We will denote by $V(\cP)$ a set of vectors spanning the vertex rays of $\cP$. Each such
vector is determined up to the choice of a positive real scalar. We may take $V(\cP_S)$
to be a subset of $S$.

A {\em triangulation\/} of a polyhedral cone $\cP$ of dimension $d$ is a collection of 
$d$-dimensional simplicial cones whose union is $\cP$, whose vertex rays are subsets of the
rays of $\cP$ and such that the intersections of any two simplicial cones are faces of both.

\begin{definthm}\label{def:adjpol}
(\cite{MR1431788})
Let $\cP$ be a polyhedral cone in $\Rbb^{\ell+1}$ and let $T(\cP)$~be a triangulation
of~$\cP$. We define the {\em adjoint polynomial\/} of $\cP$ 
(with respect to the choice~$V(\cP)$ of vertex vectors) to be
\begin{equation}\label{eq:adjpol}
\Abb_\cP(t_0,\dots, t_\ell) = \sum_{\sigma\in T(\cP)} \Vol(\sigma) 
\prod_{(v_0,\dots, v_\ell)\in V(\cP) \smallsetminus V(\sigma)}(v_0 t_0+\cdots +v_\ell t_\ell)\saf.
\end{equation}
This definition is independent of the chosen triangulation.
\qede\end{definthm}

The quantity $\Vol(\sigma)$ in~\eqref{eq:adjpol} is the absolute value of the determinant of 
the matrix whose entries are the coordinates of the vertices of the simplicial cone $\sigma$. 
It should be viewed as the volume of $\sigma$, up to a normalization factor.

The adjoint polynomial in Definition~\ref{def:adjpol} is determined by $\cP$ up to a positive 
real scalar factor: replacing a vertex $\uv$ of $\cP$ by a multiple $\lambda\uv$ with 
$\lambda\in \Rbb_{>0}$ has the effect of multiplying each summand of $\Abb_\cP(\ut)$ by 
$\lambda$, since this either multiplies by $\lambda$ a column of the determinant computing 
$\Vol(\sigma)$ or exactly one of the other factors. We could fix this factor, for example by requiring 
all vertices to have length~$1$, or by requiring the non-coordinate vertices to lie on the hyperplane 
$\{a_0=1\}$, but no such choice is necessary for what follows.

The independence of the definition of $\Abb_\cP(\ut)$ on the choice of a triangulation is
proved in~\cite[Theorem~4]{MR1431788}; also see~Remark~\ref{rem:inde}.

The adjoint polynomial $\Abb_\cP(\ut)$ of a polyhedral cone $\cP$ in $\Rbb^{\ell+1}$ is a 
homogeneous polynomial of degree $|V(\cP)|-\ell-1$
(\cite[Theorem~1]{MR1431788}). 
Note that the coefficients of an adjoint polynomial are not necessarily nonnegative.

\begin{example}
The adjoint polynomial for the cone $\cP_S\subseteq \Rbb^3$ for 
$S=\{(1,0,0), (1,-1,0),$ $(1,0,-1), (1,-1,-1)\}$ (the vertices of a square in the hyperplane
$x_0=1$) is $\Abb_{\cP_S}(\ut)=2t_0-t_1-t_2$.
\qede\end{example}

As we are interested in studying Lorentzian properties of adjoint polynomials, it is natural
to impose a condition guaranteeing that the coefficients are nonnegative. The most
natural such condition is that the spanning set $S$ should be contained in the nonnegative orthant. 
We will further require that the cone should contain all but one coordinate rays, that is, that 
the cone shares a face with the nonnegative orthant.
A mild generalization of this condition will be considered in Corollary~\ref{cor:orthan}.

\begin{theorem}\label{thm:adjpol}
Let $S\subseteq \Rbb_{\ge 0}^{\ell+1}$ be a finite set including the coordinate vectors $\ue_1,
\dots, \ue_\ell$. Then $\Abb_{\cP_S}(t_0,\dots, t_\ell)$ is a covolume polynomial.
In particular, $\Abb_{\cP_S}$ is M-convex and sectional log-concave, and 
for all integers $n_0,\dots, n_\ell\ge 0$ such that
\begin{equation}\label{eq:flip}
u_0^{n_0}\cdots u_\ell^{n_\ell} \Abb_{\cP_S} \left(\frac 1{u_0},\dots, \frac 1{u_\ell}\right)
\end{equation}
is a polynomial, the normalization of this polynomial is Lorentzian.
\end{theorem}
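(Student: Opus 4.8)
The plan is to deduce the theorem from Theorem~\ref{thm:R} by way of the Segre-adjoint dictionary. First I would establish (this is Proposition~\ref{prop:adjasS}) that there is a monomial ideal $I$ attached to $\cP_S$ for which $\Abb_{\cP_S}(t_0,\dots,t_\ell)$ is exactly the homogenization of the numerator $R(t_1,\dots,t_\ell)$ of $1-\zeta_I$. Granting this, Theorem~\ref{thm:R} gives that $R$ is the de-homogenization of a covolume polynomial of degree $r$; by Warren's degree formula (\cite[Theorem~1]{MR1431788}) $\Abb_{\cP_S}$ is homogeneous of that same degree $r$, so it \emph{is} that covolume polynomial, hence a covolume polynomial. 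The three displayed consequences then follow formally: $M$-convexity from Corollary~\ref{cor:Mconv}, sectional log-concavity from Corollary~\ref{cor:slc}, and the Lorentzianity of the normalization of~\eqref{eq:flip} from Proposition~\ref{prop:covtolor}, the polynomial~\eqref{eq:flip} being the $Q$ of that proposition up to the monomial factors harmlessly needed to clear denominators.

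So the substantive work is Proposition~\ref{prop:adjasS}. Given $S\subseteq\Rbb_{\ge 0}^{\ell+1}$ containing $\ue_1,\dots,\ue_\ell$, the cone $\cP_S$ shares with $\Rbb_{\ge 0}^{\ell+1}$ the facet $\{t_0=0\}$ and, crucially, is \emph{convex}. This convexity is what makes it possible to attach to $\cP_S$, through its facet (equivalently vertex) structure, a monomial ideal $I$ in the multihomogeneous coordinate ring of $\Pbb^{n_1}\times\cdots\times\Pbb^{n_\ell}$ with $n_j\gg 0$ --- with $r+1$ generators, and with the homogenizing variable $t_0$ corresponding to the one coordinate ray $\ue_0$ not contained in $\cP_S$ (so that $|V(\cP_S)|=\ell+1+r$, the degree count needed in the previous paragraph) --- in such a way that the numerator of $1-\zeta_I$ homogenizes to $\Abb_{\cP_S}$; this is the correspondence of Kohn and Ranestad \cite[Proposition~1]{MR4156995}, adapted so as to produce a \emph{convex} polytopal object. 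I would then compute $\iota_{\uN*}s(Z_{\uN},\Pbb^{\uN})$ for the monomial scheme $Z_{\uN}$ cut out by $I$ --- a toric computation, since the blow-up of $\Pbb^{\uN}$ along a monomial scheme is toric and the Segre class is a push-forward from a toric resolution (alternatively, an inclusion-exclusion or residual-intersection computation) --- extract $1-\zeta_I$, and read off the numerator $R$.

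I expect the main obstacle to be this last step: matching, term by term, the combinatorial output of the computation with Warren's sum $\Abb_{\cP_S}(\ut)=\sum_{\sigma\in T(\cP_S)}\Vol(\sigma)\prod_{(v_0,\dots,v_\ell)\in V(\cP_S)\smallsetminus V(\sigma)}(v_0t_0+\cdots+v_\ell t_\ell)$ from~\eqref{eq:adjpol}. A unimodular triangulation of the fan resolving $\cP_S$ yields a push-forward written as a sum over maximal simplicial cones, each contributing its normalized volume times a product of linear forms indexed by the rays it omits; one has to check that, under the dictionary relating $I$ to $\cP_S$, these are the same simplices, normalized volumes, and linear factors as occur in $\Abb_{\cP_S}$, while tracking carefully the normalization $\Vol(\sigma)$ and the bookkeeping between the $\ell+1$ homogeneous variables $t_0,\dots,t_\ell$ and the $\ell$-variable Segre zeta function. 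Since both sides are independent of the triangulation --- by \cite[Theorem~4]{MR1431788} for $\Abb_{\cP_S}$, and by the birational invariance of Segre classes for the other --- it is enough to verify the identity for one convenient unimodular triangulation, and the theorem then follows.
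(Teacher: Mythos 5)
Your overall strategy coincides with the paper's: establish the Segre--adjoint dictionary (Proposition~\ref{prop:adjasS}), feed it into Theorem~\ref{thm:R}, and obtain the listed consequences formally from Corollary~\ref{cor:Mconv}, Corollary~\ref{cor:slc} and Proposition~\ref{prop:covtolor}. However, there is a genuine gap at the first step: Proposition~\ref{prop:adjasS} does not apply to $\cP_S$ for an arbitrary finite $S\subseteq\Rbb_{\ge 0}^{\ell+1}$. A monomial ideal can be attached to $\cP_S$ only when the non-coordinate elements of $S$ are integer vectors of the special form $(1,v_1,\dots,v_\ell)$, i.e.\ lattice points on the hyperplane $a_0=1$; the theorem, by contrast, allows arbitrary nonnegative \emph{real} coordinates (the example immediately following it has $\sqrt 2$ entries). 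So your assertion that $\Abb_{\cP_S}$ ``is exactly the homogenization of the numerator of $1-\zeta_I$ for a monomial ideal attached to $\cP_S$'' is false in general, and the proof needs the reduction the paper performs: (i) use continuity of the adjoint in the vertex-ray data, together with the fact that covolume polynomials are closed under limits, to reduce to rational $S$; (ii) rescale each non-coordinate vertex ray to an integer vector $(d,v_1,\dots,v_\ell)$ with a common first coordinate $d$; (iii) apply Proposition~\ref{prop:adjasS} and Theorem~\ref{thm:R} not to $\cP_S$ but to the \emph{different} cone $\cP_{S'}$, where $S'$ replaces each $(d,v_1,\dots,v_\ell)$ by $(1,v_1,\dots,v_\ell)$; and (iv) recover $\Abb_{\cP_S}(t_0,\dots,t_\ell)=\Abb_{\cP_{S'}}(dt_0,t_1,\dots,t_\ell)$ (up to a positive scalar) and invoke Theorem~\ref{thm:nonnegcc} to conclude that the nonnegative substitution $t_0\mapsto dt_0$ preserves the covolume property. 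Step (iv) is not cosmetic: without Theorem~\ref{thm:nonnegcc}, or an equivalent scaling argument, your argument does not reach the stated generality.

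Separately, your treatment of Proposition~\ref{prop:adjasS} leaves its substance --- matching the computation of $\iota_{\uN*}s(Z_{\uN},\Pbb^{\uN})$ with Warren's sum --- as an anticipated verification, which you yourself flag as the main obstacle. The paper sidesteps that bookkeeping by invoking the integral formula for Segre classes of monomial schemes (\cite[Theorem~1.1 and \S2]{MR3576538}): it expresses $1-\zeta_I$ as an integral over $\{a_0=1\}\cap\cP_S$ of $\ell!\,t_1\cdots t_\ell/(1+t_1a_1+\cdots+t_\ell a_\ell)^{\ell+1}$, and the contribution of each simplex of a triangulation is precisely $\Vol(\sigma)$ times the product of the linear forms attached to the omitted vertices, i.e.\ exactly Warren's summand; no unimodular triangulation or explicit toric resolution is needed. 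Your toric/residual route is plausible but the crucial identity is not actually carried out in the proposal. A minor further slip: $\ue_0$ need not be ``the one coordinate ray not contained in $\cP_S$'', since $S$ is allowed to contain $\ue_0$.
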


\begin{remark}
Thus, the adjoint polynomials considered in Theorem~\ref{thm:adjpol} are {\em dually 
Lorentzian\/} in the sense of~\cite{dualLor}.
\qede\end{remark}

\begin{example}
Let $\cP$ be the polyhedral cone with vertex rays spanned by the vectors
\[
\uv_1:=(1,1,\sqrt 2,0,0),\quad \uv_2:=(1,\sqrt 2,1,0,0),\quad 
\uv_3:=(1,0,0,1,0),\quad \uv_4:=(1,0,0,0,1)
\]
as well as the coordinate directions
\[
\ue_1=(0,1,0,0,0),\quad
\ue_2=(0,1,0,0,0),\quad
\ue_3=(0,1,0,0,0),\quad
\ue_4=(0,1,0,0,0)\saf.
\]
A triangulation of $\cP$ consists of the four simplices
\[
\langle \uv_1,\uv_2,\uv_3,\uv_4,\ue_1\rangle,\quad
\langle \uv_1,\uv_3,\uv_4,\ue_1,\ue_2\rangle,\quad
\langle \uv_1,\uv_2,\ue_2,\ue_1,\ue_4\rangle,\quad
\langle \uv_3,\ue_1,\ue_2,\ue_3,\ue_4\rangle\saf,
\]
and the adjoint polynomial of $\cP$ is
\begin{multline*}
\Abb_\cP(t_0,\dots, t_4)=t_0^3 + (1 +  \sqrt 2) t_ 1 t_0^2 + (1 +  \sqrt 2) t_2 t_0^2 + t_3 t_0^2 + t_4 t_0^2 
+  \sqrt 2 t_ 1^2 t_0 + 3 t_0 t_ 1 t_2\\ 
 + (1 +  \sqrt 2) t_ 1 t_3 t_0 + (1 +  \sqrt 2) t_4 t_ 1 t_0 +  \sqrt 2 t_2^2 t_0 + (1 +  \sqrt 2) t_2 t_3 t_0 
 + (1 +  \sqrt 2) t_4 t_2 t_0 + t_3 t_4 t_0 \\ 
+  \sqrt 2 t_1^2 t_3+  \sqrt 2 t_1^2 t_4 + 3 t_1 t_2 t_3 + 3 t_1 t_2 t_4 +  \sqrt 2 t_1 t_3 t_4 
+  \sqrt 2 t_2^2 t_3 +  \sqrt 2 t_2^2 t_4 + t_2 t_3 t_4  \sqrt 2\saf.
\end{multline*}
The 21 terms in its support (of 35 in the simplex of tuples $(a_0,\dots, a_4)$ of nonnegative 
integers with $\sum a_i=3$) form an M-convex set, as prescribed by Theorem~\ref{thm:adjpol}.
In fact, the normalization of $\Abb_\cP$ is Lorentzian (the polynomial itself is not Lorentzian).
The reader may verify that the normalization of 
\[
u_0^3 u_1^2 u_2^2 u_3 u_4\cdot \Abb_\cP\left(\frac 1{u_0},\frac 1{u_1},\frac 1{u_2},\frac 1{u_3},
\frac 1{u_4}\right)\saf,
\]
a homogeneous degree-$6$ polynomial, is Lorentzian as stated in Theorem~\ref{thm:adjpol}.
\qede\end{example}

Theorem~\ref{thm:adjpol} is proved by relating the adjoint polynomial to a Segre zeta function.
As mentioned at the beginning of this section, Kathl\'en Kohn and Kristian Ranestad express
such a relation in~\cite[Proposition~1]{MR4156995}; their result deals with $\zeta_I$, and
correspondingly with not necessarily convex regions. We adopt the context of convex
polyhedral cones and focus on the function $1-\zeta_I$.
For these considerations, $I$ is a set of {\em monomials.\/}

Precisely, let $F$ be a finite set of vectors $(v_1,\dots, v_\ell)\in \Zbb_{\ge 0}^\ell$.
We can associate with $F$ two objects:
\begin{itemize}
\item 
The convex polyhedral cone $\cP_S$ in $\Rbb^{\ell+1}$ spanned by the set 
\begin{equation}\label{eq:monS}
S=\{\uv:=(1,v_1,\dots, v_\ell)\,|\, (v_1,\dots, v_\ell)\in F\}\cup \{\ue_1,\dots, \ue_\ell\} \saf.
\end{equation}
\item 
The ideal $k[x_1,\dots, x_\ell]$ generated by the set $I$ of {\em monomials\/}
\[
x_1^{v_1}\cdots x_\ell^{v_\ell}\quad,\quad \text{with $(v_1,\dots, v_\ell)\in F$}
\]
and the corresponding Segre zeta function $\zeta_I(t_1,\dots, t_\ell)$.
\end{itemize}
Here we are viewing each monomial as (trivially) homogenous in each variable, hence as a 
multihomogeneous polynomial.

\begin{prop}\label{prop:adjasS}
With notation as above, let $\uv_0,\dots, \uv_r\in V(\cP_S)$ be the vertex ray vectors
other than $\ue_1,\dots, \ue_\ell$. Then
\[
1-\zeta_I(t_1,\dots, t_\ell)=\left.\frac{\Abb_{\cP_S}(t_0,\dots, t_\ell)}
{\prod_{i=0}^r \uv_i\cdot (t_0,\dots, t_\ell)}
\right|_{t_0=1}\saf.
\]
\end{prop}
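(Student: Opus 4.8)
The plan is to compute both sides of the claimed identity and match them. The key observation is that the ideal $I$ here is a monomial ideal, so its Segre zeta function can be computed combinatorially, while the right-hand side is a manifestly combinatorial object built from the cone $\cP_S$. I would first recall (from Theorem~\ref{thm:szf2} or directly) that once we fix the generating set of monomials $x_1^{v_1}\cdots x_\ell^{v_\ell}$ with $(v_1,\dots,v_\ell)\in F$, the denominator of $\zeta_I$ is $\prod_{i}(1+\uv_i\cdot(t_1,\dots,t_\ell)\text{-part})$ over the generators — but note that the statement uses the \emph{vertex rays} $\uv_0,\dots,\uv_r$ of $\cP_S$, not all of $F$. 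So the first real step is to reconcile these: a generator of $I$ corresponding to a non-vertex element of $F$ is redundant for the Segre class computation in the sense that it does not change $s(Z,\Pbb^{\uN})$ (the scheme $Z$ only depends on the Newton polyhedron / the vertices), hence does not change $\zeta_I$; and the polynomial identity in the proposition is insensitive to which generating set we pick as long as we compute the numerator against the matching denominator. I would therefore reduce to the case where $F$ itself consists exactly of the vertex directions $\uv_0,\dots,\uv_r$ (projected to $\Zbb_{\ge 0}^\ell$), so that the denominator $\prod_{i=0}^r \uv_i\cdot(t_0,\dots,t_\ell)|_{t_0=1} = \prod_{i=0}^r(1+\uv_{i1}t_1+\cdots+\uv_{i\ell}t_\ell)$ is literally the $\zeta_I$ denominator.

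Next I would compute $\zeta_I(t_1,\dots,t_\ell)$ for a monomial ideal explicitly. For a monomial ideal, one can work on the torus-invariant subscheme and use the toric/combinatorial description of Segre classes of monomial schemes (as in the work on monomial ideals and their Segre classes, e.g.\ via the Newton polyhedron); alternatively, one may embed in a large $\Pbb^{\uN}$ and use residual intersection / inclusion–exclusion over the generators. Either way the upshot is a closed formula: $s(Z_{\uN},\Pbb^{\uN})$, pushed forward, is governed by the combinatorics of the staircase region complementary to the Newton polyhedron of $I$, and after clearing the denominator $\prod_{i=0}^r(1+\uv_i\cdot h)$ one obtains that the numerator of $1-\zeta_I$ is a sum, over the maximal simplices in a triangulation of that bounded complementary region, of (normalized volume) times (product of the linear forms $\uv_i\cdot t$ over the vertices $\uv_i$ \emph{not} used in that simplex). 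The point is that this is exactly the shape of Warren's adjoint polynomial $\Abb_{\cP_S}$: the bounded region cut off inside the orthant by the hyperplanes through the facets of $\cP_S$, triangulated, with each simplex weighted by volume and multiplied by the product of the linear forms of the \emph{complementary} vertices. I expect the cleanest route is: triangulate $\cP_S$ itself (Definition~\ref{def:adjpol}), observe that since $\cP_S$ contains the coordinate rays $\ue_1,\dots,\ue_\ell$ and its remaining vertices are the $\uv_i$, each simplex $\sigma$ of the triangulation uses some of the $\ue_j$'s and some subset of the $\uv_i$'s; the $\ue_j$ factors contribute nothing to $\Abb_{\cP_S}$ (the linear form $\ue_j\cdot t = t_j$ is excluded precisely when $\ue_j\in V(\sigma)$, i.e.\ it is \emph{used}) — so $\Abb_{\cP_S}(t_0,\dots,t_\ell) = \sum_\sigma \Vol(\sigma)\prod_{\uv_i\notin V(\sigma)}(\uv_i\cdot t)$, and this coincides term-by-term with the numerator of $1-\zeta_I$ computed above, after setting $t_0=1$ in the right spots.

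The main obstacle, and where I would spend the most care, is establishing the explicit combinatorial formula for the numerator of $1-\zeta_I$ of a monomial ideal and seeing that its triangulation-indexed expansion literally matches $\Abb_{\cP_S}$ — in particular getting the bookkeeping of which vertices are "used" versus "complementary" exactly right, and checking that the volume normalizations agree (both use $|\det|$ of the vertex coordinate matrix, which is reassuring). A secondary subtlety is the homogenization/dehomogenization: $\Abb_{\cP_S}$ is homogeneous of degree $|V(\cP_S)| - \ell - 1 = (r+1+\ell) - \ell - 1 = r$, matching the degree of the homogenized numerator of $1-\zeta_I$ from Theorem~\ref{thm:R}, and setting $t_0=1$ in $\Abb_{\cP_S}$ together with $\prod_{i=0}^r(1+\uv_i\cdot(t_1,\dots,t_\ell))$ in the denominator recovers exactly $R/\prod(1+e_{\cdot k}t)$; I would verify this degree and normalization compatibility as a sanity check rather than deriving it from scratch. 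The triangulation-independence of both sides (Warren's Theorem~4 for the left, and the well-definedness of $\zeta_I$ for the right) means it suffices to exhibit one compatible triangulation, which simplifies matters. If a fully direct match of the two combinatorial formulas proves awkward, a fallback is to instead run the argument of the proof of Theorem~\ref{thm:R}: realize $1-\zeta_I$ via the blow-up and the globally generated bundle $\cR$, and identify $\Abb_{\cP_S}$ with $q_*(c(\cR)\cap[X])$ for the relevant toric resolution — but I expect the direct combinatorial comparison, following Kohn–Ranestad's approach for $\zeta_I$ and replacing it with $1-\zeta_I$, to be the shorter path.
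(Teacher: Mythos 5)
Your overall strategy points in the same direction as the paper's proof (a combinatorial formula for $1-\zeta_I$ of a monomial scheme, matched against Warren's definition via a triangulation of $\cP_S$), but the one substantive ingredient is precisely what you defer as ``the main obstacle'', and your provisional version of it is incorrect. The paper's proof rests on the integral formula of \cite{MR3576538}: for a monomial scheme, $\iota_*s(Z,W)=\int_N \ell!\,X_1\cdots X_\ell\, da_1\cdots da_\ell/(1+X_1a_1+\cdots+X_\ell a_\ell)^{\ell+1}$, where $N$ is the complement of the Newton region in the orthant; since the integral over the whole orthant is $1$, one gets $1-\zeta_I$ as the same integral over $\{a_0=1\}\cap \cP_S$, and \cite[\S2]{MR3576538} evaluates it simplex by simplex, the simplex $\sigma$ contributing $\Vol(\sigma)\, t_1\cdots t_\ell\big/\prod_{\uv\in V(\sigma)}(\uv\cdot\ut)$ restricted to $t_0=1$. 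Your suggested routes (``toric/combinatorial description via the Newton polyhedron'' or ``residual intersection / inclusion--exclusion over the generators'') amount either to invoking this result or to reproving it; inclusion--exclusion over generators does not in any obvious way produce a triangulation-indexed closed formula, so as written the proposal assumes the heart of the proposition rather than establishing it.

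Moreover, the explicit matching you do write down fails in exactly the bookkeeping you flag. In Warren's definition the factor $t_j=\ue_j\cdot\ut$ is omitted only when $\ue_j\in V(\sigma)$; it \emph{is} present whenever $\ue_j\notin V(\sigma)$, so your identity $\Abb_{\cP_S}=\sum_\sigma \Vol(\sigma)\prod_{\uv_i\notin V(\sigma)}(\uv_i\cdot\ut)$ (product over the non-coordinate vertices only) is false unless every maximal simplex contains all of $\ue_1,\dots,\ue_\ell$. Concretely, for $I=(x_1,x_2)$ with $\ell=2$, $\uv_0=(1,1,0)$, $\uv_1=(1,0,1)$, the triangulation $\langle \uv_0,\uv_1,\ue_1\rangle$, $\langle \uv_1,\ue_1,\ue_2\rangle$ gives $\Abb_{\cP_S}=t_2+(t_0+t_1)=t_0+t_1+t_2$, and indeed $(t_0+t_1+t_2)/((t_0+t_1)(t_0+t_2))|_{t_0=1}=(1+t_1+t_2)/((1+t_1)(1+t_2))=1-\zeta_I$; your expression instead yields $1+t_0+t_1$, which is not even homogeneous. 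The mechanism that makes the correct identification work is the extra numerator factor $t_1\cdots t_\ell$ carried by each simplex contribution in the integral formula: over the common denominator $\prod_{\uv\in V(\cP_S)}(\uv\cdot\ut)$ it cancels exactly the coordinate factors, leaving $\Abb_{\cP_S}/\prod_{i=0}^r(\uv_i\cdot\ut)$. Without that input the term-by-term comparison does not close. (Your preliminary reduction to $F$ consisting only of vertex directions is harmless but unnecessary, since $\zeta_I$ depends only on the ideal and the Newton region depends on $F$ only through its vertices; the degree count and the appeal to triangulation-independence are fine.)
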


\begin{remark}\label{rem:adjasSR}
In other words, the adjoint polynomial $\Abb_{\cP_S}$ is the degree-$r$ homogenization
of the polynomial $R$ appearing in Theorem~\ref{thm:R}.
\qede\end{remark}

\begin{proof}
The Segre class $s(Z,W)$ of a monomial subscheme of a variety $W$ may be
expressed as an integral. Explicitly, let $X_1,\dots, X_\ell$ be hypersurfaces of a variety $W$ 
meeting with normal crossings; let $Z$ be the closed subscheme cut out by monomials 
in the $X_i$ with classes $v_1 X_1+\cdots+v_\ell X_\ell$ as $(v_1,\dots, v_\ell)\in \Zbb_{\ge 0}^\ell$ 
ranges over a finite set $F$; and let $N\subseteq \Rbb^\ell$ be the complement in the nonnegative
orthant $\Rbb_{\ge 0}^\ell$ of the convex hull of the translates of the nonnegative orthant by the points 
in $F$. Then, using coordinates $(a_1,\dots, a_\ell)$ for $\Rbb^\ell$, we have
(\cite[Theorem~1.1]{MR3576538})
 \[
s(Z,W)=
\int_N \frac{\ell!\, X_1\cdots X_\ell\, da_1\cdots da_\ell}
{(1+X_1 a_1+\cdots +X_\ell a_\ell)^{\ell+1}}\saf.
\]
(The result in \cite{MR3576538} holds in a somewhat more general setting, but this won't play
a role here.)
This should be interpreted by computing the integral on the right-hand side as a rational
function in the parameters $X_j$, then expanding this function as a power series, and
evaluating each monomial $X_1^{a_1}\cdots X_\ell^{a_\ell}$ as a class in $A_*(Z)$;
see~\cite{MR3576538} for a more extensive discussion. For instance, monomials 
$X_1^{a_1}\cdots X_\ell^{a_\ell}$ with $\sum_j a_j>\dim W$ evaluate to $0$, so that the
integral may be written as a sum of only finitely many terms for any given $Z$ and~$W$.

If $N$ is the nonnegative orthant itself, the integral evaluates to $1$. Therefore, 
\begin{equation}\label{eq:intMS}
[W]-\iota_* s(Z,W)=
\int_{\Rbb_{\ge 0}^\ell \smallsetminus N} \frac{\ell!\, X_1\cdots X_\ell\, da_1\cdots da_\ell}
{(1+X_1 a_1+\cdots +X_\ell a_\ell)^{\ell+1}}\saf,
\end{equation}
where $\iota: Z\to W$ is the inclusion. Adopting the notation introduced in~\eqref{eq:monS},
viewing the cone $\cP_S$ in $\Rbb^{\ell+1}$, with coordinates $(a_0,\dots, a_\ell)$,
and identifying the hyperplane $\{a_0=1\}$ with $\Rbb^\ell$, we have
\[
\Rbb_{\ge 0}^\ell \smallsetminus N = \{a_0=1\}\cap \cP_S\saf.
\]
Applying~\eqref{eq:intMS} with $W=\Pbb^{n_1}\times\cdots\times \Pbb^{n_\ell}$, $n_j\gg 0$, 
and $X_j=$ the hypersurface in $W$ obtained by restricting the $j$-th factor to a hyperplane,
we get
\begin{equation}\label{eq:intS}
1-\zeta_I(t_1,\dots, t_\ell)=
\int_{\{a_0=1\}\cap \cP_S} \frac{\ell!\, t_1\cdots t_\ell\, da_1\cdots da_\ell}
{(1+t_1 a_1+\cdots +t_\ell a_\ell)^{\ell+1}}\saf.
\end{equation}
Now let $T(\cP_S)$ be a triangulation of $\cP_S$, as in Definition~\ref{def:adjpol}.
According to~\cite[\S2]{MR3576538}, a simplicial cone with vertices $\uv_0,\dots, \uv_d$ and 
$\ue_{i_1},\dots, \ue_{i_{\ell-d}}$, where $\uv_j$ correspond to vectors in~$F$, contributes
{\small
\[
\frac{\Vol(\sigma)\, t_1\cdots t_\ell}
{\prod_{j=0}^d (1+\uv_{j1}t_1+\cdots +\uv_{j\ell}t_\ell)\prod_{j=1}^{\ell-d}t_j}
=\left.\frac{\Vol(\sigma)\, t_1\cdots t_\ell}
{\prod_{j=0}^d (\uv_j\cdot \ut)\prod_{j=1}^{\ell-d} (\ue_j\cdot \ut)}
\right|_{t_0=1} 
=\left.\frac{\Vol(\sigma)\, t_1\cdots t_\ell}
{\prod_{\uv\in V(\sigma)}(\uv\cdot \ut)}
\right|_{t_0=1}
\]}
to the integral in~\eqref{eq:intS}, and therefore
\begin{equation}\label{eq:intap}
\begin{aligned}
1-\zeta_I(t_1,\dots, t_\ell)&=
\int_{\{a_0=1\}\cap \cP_S} \frac{\ell!\, t_1\cdots t_\ell\, da_1\cdots da_\ell}{(1+t_1 a_1+\cdots +t_\ell a_\ell)^{\ell+1}} \\
&=\left.\frac{\sum_{\sigma\in T(\cP_S)}\Vol(\sigma)\,\prod_{\uv\in V(\cP_S)\smallsetminus V(\sigma)}
(\uv \cdot \ut)\, t_1\cdots t_\ell}
{\prod_{\uv\in V(\cP_S)}(\uv \cdot \ut)}
\right|_{t_0=1} \\
&=\left.\frac{\Abb_{\cP_S}(t_0,\dots, t_\ell)\, t_1\cdots t_\ell}
{\prod_{\uv\in V(\cP_S)}(\uv \cdot \ut)}
\right|_{t_0=1} \\
&=\left.
\frac{\Abb_{\cP_S}(t_0,\dots, t_\ell)}
{\prod_{i=0}^r (\uv_i\cdot \ut)}
\right|_{t_0=1}\saf,
\end{aligned}
\end{equation}
as stated.
\end{proof}

\begin{remark}\label{rem:inde}
As an aside, the statement of Proposition~\ref{prop:adjasS} implies that the definition of adjoint 
polynomial is independent of the triangulation (cf.~\cite[Theorem~4]{MR1431788}), for the 
convex polyhedral cones considered here. In fact, the integral expression for the adjoint
polynomial worked out in~\eqref{eq:intap} extends to arbitrary polyhedral cones (but not, to
our knowledge, the interpretation in terms of a Segre zeta function), and this implies the
independence on the choice of triangulation in general, providing an alternative to
\cite[Theorem~4]{MR1431788}. 
\qede\end{remark}

\begin{proof}[Proof of Theorem~\ref{thm:adjpol}]
The adjoint polynomial of a polyhedral cone depends continuously on the coordinates of 
vectors spanning its vertex rays, so we may express it as a limit of adjoint polynomials of 
polyhedral cones whose vertex rays contain vectors with rational components. 
By definitions, limits of covolume polynomials are covolume polynomials, so we are
reduced to the case of polyhedral cones with rational vertex rays. 
In fact, by choosing a large enough integer $d$, we may assume that all non-coordinate 
vectors in $S$ are of the form $(d,v_1,\dots, v_\ell)\in \Zbb_{\ge 0}^{\ell+1}$. We are then 
reduced to proving the assertion of Theorem~\ref{thm:adjpol} for polyhedral cones 
spanned by a set $S$ consisting of $\ue_1,\dots, \ue_\ell$ and of vectors of this type.

For this, let
\[
S'=\{\ue_1,\dots, \ue_\ell\} \cup \{(1,v_1,\dots, v_\ell)\,|\, (d,v_1,\dots, v_\ell)\in S\}\saf.
\]
The set $S'$ is of the form considered in Proposition~\ref{prop:adjasS}. It follows that
$\Abb_{\cP_{S'}}$ is the homogenization of the numerator of $1-\zeta_I$ for a suitable
set $I$ of monomials, cf.~Remark~\ref{rem:adjasSR}. By Theorem~\ref{thm:R},
$\Abb_{\cP_{S'}}$ is a covolume polynomial. Now Definition~\ref{def:adjpol} implies that
\[
\Abb_{\cP_S}(t_0,t_1,\dots, t_\ell) = d\, \Abb_{\cP_{S'}}(dt_0,t_1,\dots, t_\ell)\saf;
\]
therefore, $\Abb_{\cP_S}$ is obtained from a covolume polynomial by a nonnegative
change of coordinates. By Theorem~\ref{thm:nonnegcc} we can conclude that
$\Abb_{\cP_S}$ is a covolume polynomial, as needed.
\end{proof}

We do not know whether the adjoint polynomials considered in Theorem~\ref{thm:adjpol}
are also necessarily Lorentzian after normalization. An affirmative answer to 
Question~\ref{que:norLor} would imply that this is the case.

We also do not know whether Theorem~\ref{thm:adjpol} extends to all convex polyhedral
cones contained in the nonnegative orthant. 
The hypothesis of convexity cannot be removed, in the sense that there are non-convex unions of
polyhedral cones contained in the nonnegative orthant and whose adjoint polynomial is 
not M-convex, hence not a covolume polynomial. 

\begin{example}
Let $\uv_1=(1,1,1,0)$, $\uv_2=(1,1,0,1)$, along with the coordinate vectors $\ue_0=(1,0,0,0),\dots, 
\ue_3=(0,0,0,1)$ in $\Rbb^4$. The adjoint polynomial of the union of the three simplicial 
cones with vertex rays $\ue_0\ue_1\uv_1\uv_2$, $\ue_0\uv_1\uv_2\ue_3$, 
$\ue_0\uv_1\ue_2\ue_3$ is
\[
t_0 t_1+ t_1^2+t_1t_2+t_1t_3+t_2t_3
\]
as the reader may verify. 
This polynomial is not M-convex or sectional log-concave.
(This example is the polyhedral version of Example~\ref{ex:Pnotcov}; 
see~\cite[Proposition~1]{MR4156995}.)
\qede\end{example}

On the other hand, Theorem~\ref{thm:adjpol} extends easily to the following class of
polyhedral cones.

\begin{defin}\label{def:orthantal}
We say that a convex polyhedral cone is {\em orthantal\/} if it shares a face with a 
full dimensional simplicial cone enclosing it and contained in the nonnegative orthant.
\qede\end{defin}

The intersection of an orthantal polyhedral cone with the hyperplane $a_0=1$ will be a 
convex polytope enclosed in a simplex with which it shares a face, and we are assuming 
that this simplex is contained in the nonnegative orthant. 
\begin{center}
\includegraphics[scale=.4]{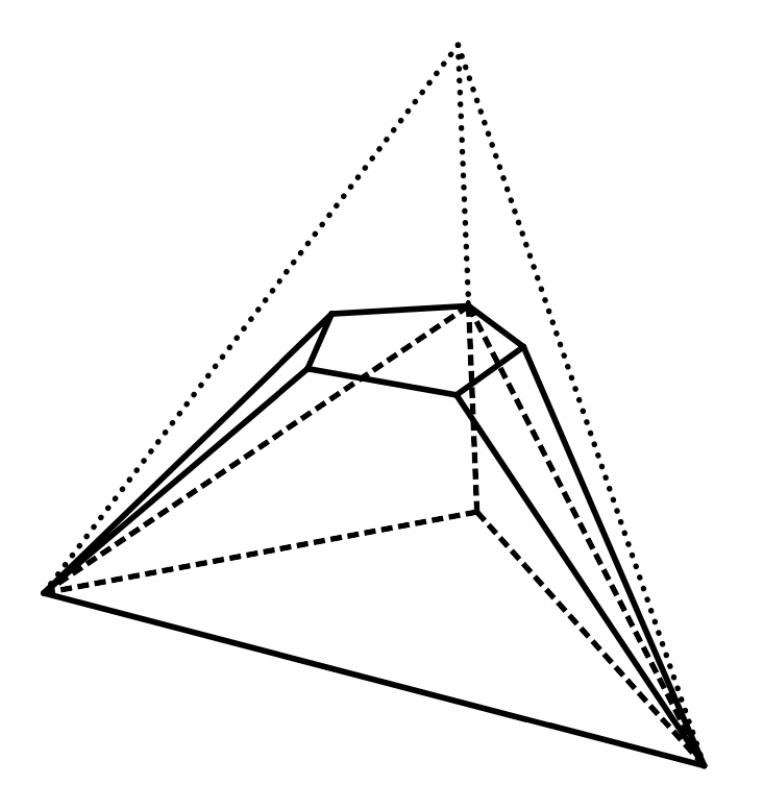}
\end{center}
The polyhedral cones considered in Theorem~\ref{thm:adjpol} are orthantal, with the 
simplicial cone equal to the nonnegative orthant itself.

\begin{corol}\label{cor:orthan}
The adjoint polynomial of a convex orthantal polyhedral cone is a covolume polynomial.
Therefore, it is M-convex, sectional log-concave, and the normalizations of the corresponding
polynomials~\eqref{eq:flip} are Lorentzian (that is, adjoint polynomials of convex orthantal
polyhedral cones are dually Lorentzian).
\end{corol}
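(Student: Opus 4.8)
The plan is to reduce the orthantal case to Theorem~\ref{thm:adjpol} by a single invertible, entrywise-nonnegative linear change of coordinates of the ambient space $\Rbb^{\ell+1}$, exploiting the fact that adjoint polynomials transform in a controlled way under linear maps.

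First I would set up the reduction. Let $\cP$ be a convex orthantal polyhedral cone, and let $\Sigma\subseteq\Rbb_{\ge 0}^{\ell+1}$ be a simplex enclosing $\cP$ and sharing a facet with it. Write $\Sigma$ as the cone on linearly independent vectors $\uw_0,\dots,\uw_\ell\in\Rbb_{\ge 0}^{\ell+1}$, indexed so that the facet shared with $\cP$ is spanned by $\uw_1,\dots,\uw_\ell$, and let $B$ be the invertible $(\ell+1)\times(\ell+1)$ matrix whose $i$-th column is $\uw_i$. Then $B$ has nonnegative entries and satisfies $B\ue_i=\uw_i$, so $\cP':=B^{-1}\cP$ is a convex polyhedral cone contained in the nonnegative orthant, and $B^{-1}$ carries the facet of $\Sigma$ spanned by $\uw_1,\dots,\uw_\ell$ onto the facet of the orthant spanned by $\ue_1,\dots,\ue_\ell$; in particular these $\ell$ coordinate rays are vertex rays of $\cP'$. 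Thus $\cP'$ is precisely of the type covered by Theorem~\ref{thm:adjpol}, so $\Abb_{\cP'}$ is a covolume polynomial.

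Next I would record how the adjoint polynomial behaves under $B$. Pushing a triangulation $T(\cP')$ forward by the linear isomorphism $B$ produces a triangulation of $\cP$, and for a simplex $\sigma$ of $T(\cP')$ with image $B\sigma$ one has $\Vol(B\sigma)=|\det B|\,\Vol(\sigma)$, the map $B$ sends $V(\cP')\smallsetminus V(\sigma)$ bijectively to $V(\cP)\smallsetminus V(B\sigma)$, and $(B\uv)\cdot\ut=\uv\cdot(B^{\mathsf T}\ut)$ for every vector $\uv$. Substituting these three facts into Definition~\ref{def:adjpol} yields
\[
\Abb_{\cP}(\ut)=|\det B|\cdot\Abb_{\cP'}(B^{\mathsf T}\ut)\saf.
\]
Since $B^{\mathsf T}$ has nonnegative entries and $|\det B|$ is a positive real number, and since covolume polynomials are closed under positive scalar multiples as well as, by Theorem~\ref{thm:nonnegcc}, under nonnegative linear substitutions, it follows that $\Abb_{\cP}$ is a covolume polynomial. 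The three remaining assertions are then formal: $M$-convexity of the support follows from Corollary~\ref{cor:Mconv}, sectional log-concavity from Corollary~\ref{cor:slc}, and the Lorentzianity of the normalizations of the polynomials~\eqref{eq:flip} from Proposition~\ref{prop:covtolor}.

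I do not expect a genuine obstacle in this argument. The only points requiring care are the bookkeeping in the transformation formula for $\Abb$ — in particular choosing the indexing of $\uw_0,\dots,\uw_\ell$ so that the facet shared with $\cP$ is the one omitting $\uw_0$, which makes a separate coordinate permutation unnecessary — and reading ``sharing a face'' in Definition~\ref{def:orthantal} as ``sharing a facet'' (consistently with the hypothesis of Theorem~\ref{thm:adjpol}), so that $\cP'$ genuinely acquires the $\ell$ coordinate rays $\ue_1,\dots,\ue_\ell$ that that theorem requires. Both are routine.
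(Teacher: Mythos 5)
Your argument is correct and follows essentially the same route as the paper: you conjugate by the nonnegative invertible matrix whose columns span the enclosing simplex (with the shared facet sent to the coordinate facet $\ue_1,\dots,\ue_\ell$), apply Theorem~\ref{thm:adjpol} to the transformed cone, and conclude via the transformation formula for adjoints together with Theorem~\ref{thm:nonnegcc}. The transformation identity $\Abb_{\cP}(\ut)=|\det B|\,\Abb_{\cP'}(B^{\mathrm{t}}\ut)$ that you re-derive inline is exactly the paper's Lemma~\ref{lem:adjlt}, so no new ingredient is involved.
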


We need the following elementary fact recording the effect of a linear transformation on an 
adjoint polynomial.
Let $A=(a_{ij})$, $0\le i,j\le \ell$ be a nonsingular matrix and let $\cP$ be any polyhedral cone 
in $\Rbb^{\ell+1}$. Denote by $A\cP$ the polyhedral cone with vertices $A\cdot \uv$ for 
$\uv\in V(\cP)$. Denote by $A^{\mathrm{t}}$ the transpose of $A$.

\begin{lemma}\label{lem:adjlt}
\[
\Abb_{A\cP}(\ut)=|\det(A)|\cdot \Abb_{\cP}\left(A^\mathrm{t} (\ut)\right)\saf.
\]
\end{lemma}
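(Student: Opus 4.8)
The plan is to prove the transformation law by tracking the effect of $A$ termwise on the defining formula~\eqref{eq:adjpol}. The key observation is that a triangulation $T(\cP)$ of $\cP$ is carried by $A$ to a triangulation $T(A\cP) = \{A\sigma : \sigma \in T(\cP)\}$ of $A\cP$, since $A$ is a linear isomorphism: it sends $d$-dimensional simplicial cones to $d$-dimensional simplicial cones, preserves the face relation, and matches up vertex rays, so $V(A\sigma) = A \cdot V(\sigma)$ and $V(A\cP) = A \cdot V(\cP)$. Thus the sum in $\Abb_{A\cP}(\ut)$ is indexed by the same set $T(\cP)$, with the $\sigma$-summand replaced by the $A\sigma$-summand.

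Next I would analyze the two pieces of a single summand. For the volume factor: if $\sigma$ has vertex vectors $\uw_0,\dots,\uw_\ell$ (an $(\ell+1)\times(\ell+1)$ matrix of coordinates), then $A\sigma$ has vertex matrix $A\cdot(\uw_0|\cdots|\uw_\ell)$, so $\Vol(A\sigma) = |\det A|\cdot \Vol(\sigma)$ by multiplicativity of determinants. For the linear forms: a vertex $\uv \in V(\cP)\smallsetminus V(\sigma)$ contributes the factor $\uv\cdot\ut = \ut^{\mathrm t}\uv$; the corresponding vertex of $A\cP$ is $A\uv$, contributing $(A\uv)\cdot\ut = \ut^{\mathrm t} A \uv = (A^{\mathrm t}\ut)^{\mathrm t}\uv = \uv\cdot(A^{\mathrm t}\ut)$. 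Since $\uv \mapsto A\uv$ is a bijection from $V(\cP)\smallsetminus V(\sigma)$ onto $V(A\cP)\smallsetminus V(A\sigma)$, the product over $V(A\cP)\smallsetminus V(A\sigma)$ of $(A\uv')\cdot\ut$-type factors equals the product over $V(\cP)\smallsetminus V(\sigma)$ of $\uv\cdot(A^{\mathrm t}\ut)$, which is exactly the corresponding product factor in $\Abb_\cP$ evaluated at $A^{\mathrm t}\ut$.

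Combining, each summand of $\Abb_{A\cP}(\ut)$ equals $|\det A|$ times the corresponding summand of $\Abb_\cP$ evaluated at $A^{\mathrm t}\ut$; summing over $\sigma\in T(\cP)$ gives $\Abb_{A\cP}(\ut) = |\det A|\cdot \Abb_\cP(A^{\mathrm t}\ut)$, as claimed. One small point to address is the ambiguity of $V(\cP)$ up to positive scalars on each vertex ray, but since both sides of the asserted identity scale the same way under such rescalings (as already noted in the discussion following Definition~\ref{def:adjpol}), this causes no difficulty; one simply fixes a choice of $V(\cP)$ and uses $V(A\cP) = A\cdot V(\cP)$ with the matching choice.

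The argument is essentially bookkeeping, so there is no real obstacle; the only point requiring a word of care is the claim that $A$ carries a triangulation to a triangulation — this is immediate from $A$ being a linear automorphism of $\Rbb^{\ell+1}$ (it preserves convexity, dimension, and the lattice of faces), but it should be stated explicitly since the whole computation rests on it. Alternatively, one can bypass triangulations entirely by invoking the integral formula for the adjoint polynomial noted in Remark~\ref{rem:inde} and performing the change of variables $\underline a \mapsto A^{-1}\underline a$ in the integral, which produces the Jacobian factor $|\det A|$ and transforms the linear forms in the integrand by $A^{\mathrm t}$; I would mention this as a remark but carry out the triangulation-based proof as the main argument since it is self-contained and does not rely on the validity of that integral formula for non-orthantal cones.
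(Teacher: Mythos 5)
Your proof is correct and follows essentially the same route as the paper: transport a triangulation of $\cP$ to one of $A\cP$ via $A$, note that volumes get multiplied by $|\det A|$, and rewrite each linear factor $(A\uv)\cdot\ut$ as $\uv\cdot(A^{\mathrm t}\ut)$ before summing over the simplices. The extra remarks (the scaling ambiguity of $V(\cP)$ and the alternative integral-formula argument) are fine but not needed; the core computation matches the paper's proof.
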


\begin{remark}
Since the adjoint polynomial is only defined up to a positive factor, the term $|\det(A)|$
in Lemma~\ref{lem:adjlt} is actually superfluous.
\qede\end{remark}

\begin{proof}
By definition,
\[
\Abb_{A\cP}(\underline t) = \sum_{\sigma\in T(A\cP)} \Vol(\sigma) 
\prod_{(w_0,\dots, w_\ell)\in V(A\cP) \smallsetminus V(\sigma)}(w_0 t_0+\cdots +w_\ell t_\ell)
\]
where $T(A\cP)$ denotes a triangulation of $A\cP$. A triangulation of $A\cP$ can be 
obtained by mapping by $A$ the simplices of a triangulation for $\cP$; the volume
of the simplices in the triangulation is multiplied by $\det(A)$. Therefore
{\small
\begin{align*}
\Abb_{A\cP}&(\underline X) = \sum_{\sigma\in T(\cP)} |\det(A)|\Vol(\sigma) 
\prod_{\uv=(v_0,\dots, v_\ell)\in V(\cP) \smallsetminus V(\sigma)}
((A\uv)_0 t_0+\cdots +(A\uv)_\ell t_\ell) \\
&= |\det(A)|\sum_{\sigma\in T(\cP)} \Vol(\sigma) 
\prod_{(v_0,\dots, v_\ell)\in V(\cP) \smallsetminus V(\sigma)}
\left(\sum_{i=0}^\ell\sum_{j=0}^\ell a_{ij} v_j t_i\right) \\
&= |\det(A)|\sum_{\sigma\in T(\cP)} \Vol(\sigma) 
\prod_{(v_0,\dots, v_\ell)\in V(\cP) \smallsetminus V(\sigma)}
\left(v_0\sum_{i=0}^\ell a_{i0} t_i +\cdots+ v_\ell \sum_{i=0}^\ell a_{in} t_i\right)
\end{align*}}
with the stated consequence.
\end{proof}

\begin{proof}[Proof of Corollary~\ref{cor:orthan}]
Let $\cP$ be an orthantal polyhedral cone. 
Let $(a_{0j},\dots, a_{\ell j})$, $j=0,\dots,\ell$, be the vectors in $V(\Sigma)$ for the simplex
$\Sigma$ containing $\cP$ and contained in $\Rbb_{\ge 0}^{\ell+1}$,
with $(a_{0j},\dots, a_{\ell j})$, $j=1,\dots,\ell$ the vertices of the simplicial face of $\cP$ 
in common with $\Sigma$. Let 
\[
A=\begin{pmatrix}
a_{00} & a_{01} & \cdots & a_{0\ell} \\
a_{10} & a_{11} & \cdots & a_{1\ell} \\
\vdots & \vdots & \ddots & \vdots \\
a_{\ell 0} & a_{\ell 1} & \cdots & a_{\ell \ell}
\end{pmatrix}\saf.
\]
Since $\Sigma$ is a full dimensional simplex contained in $\Rbb_{\ge 0}^{\ell+1}$,
$A$ has nonnegative entries and is nonsingular. 
By Lemma~\ref{lem:adjlt},
\[
\Abb_{\cP}(\ut)=\Abb_{A^{-1}\cP}\left(\sum_{i=0}^\ell a_{i0}t_i,\dots, \sum_{i=0}^\ell a_{i\ell}t_i\right)\saf.
\]
The chosen simplicial face of the polyhedral cone $A^{-1}\cP$ has vertices along $\ue_1,\dots,
\ue_\ell$, while the other vertex of the enclosing simplex is mapped to $\ue_0$. Therefore 
$A^{-1}\cP$ is enclosed in the simplex with vertices $\ue_0,\dots, \ue_\ell$, that is, the 
nonnegative orthant.

By Theorem~\ref{thm:adjpol}, $\Abb_{A^{-1}\cP}$ is a covolume polynomial. Since $\Abb_{\cP}$
is obtained from $\Abb_{A^{-1}\cP}$ by a nonnegative change of variables, 
Theorem~\ref{thm:nonnegcc} implies that $\Abb_{\cP}$ is also a covolume polynomial, 
completing the proof.
\end{proof}

\newcommand{\etalchar}[1]{$^{#1}$}

\end{document}